\date{\today}
\def\1{{\bf 1}}
\def\w{\wedge}
\def\dbar{\bar\partial}
\def\C{{\mathbb C}}
\def\w{{\wedge}}
\def\M{{\mathcal M}}
\def\T{{\mathcal T}}
\def\S{{\mathcal S}}
\def\F{{\mathcal F}}
\def\I{{\mathcal I}}
\def\Hom{{\rm Hom\, }}
\def\E{{\mathcal E}}
\def\O{{\mathcal O}}
\def\Lk{{\mathcal L}}
\def\U{{\mathcal U}}
\def\J{{\mathcal J}}
\def\nbh{neighborhood }
\def\be{\begin{equation}}
\def\ee{\end{equation}}
\def\Ok{\mathcal O}
\def\J{{\mathcal J}}
 \def\Homs{{\mathcal Hom\, }}
 \def\CH{{\mathcal{CH} }}
\def\N{{\mathcal N}}
\def\Na{{\mathbb N}}
\def\V{{\mathcal V}}
\def\Kers{{\mathcal Ker\,}}
\def\PM{{\mathcal PM}}
\def\pm{{pseudomeromorphic }}
\newtheorem{thm}{Theorem}[section]
\newtheorem{lma}[thm]{Lemma}
\newtheorem{prop}[thm]{Proposition}
\theoremstyle{definition}
\theoremstyle{remark}
\newtheorem{preremark}[thm]{Remark}
\newtheorem{preex}[thm]{Example}
\newenvironment{remark}{\begin{preremark}}{\qed\end{preremark}}
\newenvironment{ex}{\begin{preex}}{\qed\end{preex}}
\numberwithin{equation}{section}
\title[]{A pointwise norm on a non-reduced analytic space}
\begin{document}

\date{\today}

\author[Mats Andersson]{Mats Andersson}

\address{Department of Mathematics\\Chalmers University of Technology and the University of
Gothenburg\\S-412 96 G\"OTEBORG\\SWEDEN}

\email{matsa@chalmers.se}

\subjclass{}

\thanks{The author  was
  partially supported by the Swedish
  Research Council}

\begin{abstract}
Let $X$ be a possibly non-reduced space of pure dimension. We introduce an 
essentially intrinsic pointwise 
Hermitian norm on
smooth  $(0,*)$-forms, in particular on holomorphic functions, on $X$.
We prove that the space of
holomorphic functions is complete with respect to the natural topology induced by
this norm.
\end{abstract}

\maketitle

\section{Introduction}
Starting with papers by Pardon and Stern, \cite{PaSt1,PaSt2},  in the early 90s, 
a lot of research on the $\dbar$-equation on a reduced singular space has been conducted 
during the last decades, e.g., \cite{BS, FOV, OvVass2, Rupp2, LR2, HePo2, HePo3, AS}  
and many others. In most of them estimates for solutions are discussed.  
A pointwise, essentially unique, norm of functions and forms on 
a reduced space $X$ is obtained via a local embedding of $X$ in
a smooth manifold $\U$ and a Hermitian norm on $\U$.  

Only quite recently there has been some work about analysis on non-reduced spaces.
The celebrated Ohsawa-Takegoshi theorem, \cite{OT},  has been generalized to encompass
extensions of holomorphic functions defined on non-reduced subvarieties $X$ 
 defined by certain multiplier ideal sheaves of a manifold $Y$, 
see, e.g., \cite{CDM, De}.  In this case
the $L^2$-norm of a function (or form) $\phi$ on the subvariety is defined as a limit of $L^2$-norms of
an arbitrary extension of $\phi$ over small neighborhoods of $X$ in $Y$.   
A pointwise, but not intrinsic, norm of holomorphic functions on a non-reduced $X$ is used by Sznajdman in \cite{Sz},  
where he proved an analytically formulated local Brian\c con-Skoda-Huneke 
type theorem on a non-reduced $X$ of pure dimension.

In this paper we introduce, given a non-reduced space $X$ of pure dimension $n$,
a pointwise Hermitian norm $|\cdot |_X$ on $\Ok_X$ such that 
$|\phi|_X^2$ is a smooth function on the underlying reduced space $Z$ for any holomorphic $\phi$.
The norm is essentially canonical where $Z$ is smooth 
whereas the extension across $Z_{sing}$ possibly depends on some choices.   
A motivation is that this norm makes it possible to formulate statements
about growth of extensions of holomorphic functions defined on a non-reduced
subvariety, cf.~Remark~\ref{kontur}.

\smallskip
By  definition there is,  locally,  an embedding 
\begin{equation}\label{basic}
i\colon X\to \U \subset\C^N,
\end{equation}
where $\U\subset\C^N$ is an open subset.
This means that we have an ordinary embedding $j\colon Z\to \U$ and
 a coherent ideal
sheaf $\J$ in $\U$ with zero set $Z$ such that the structure sheaf $\Ok_X$,
the sheaf of holomorphic functions on $X$,  is isomorphic
to $\Ok_\U/\J$. Thus we have a natural surjective mapping $i^*\colon \Ok_\U\to\Ok_X$ with kernel 
$\J$.

Recall that a holomorphic differential operator $L$ in $\U$ is Noetherian with respect to $\J$
if $L\Phi=0$ on $Z$ for $\Phi$ in $\J$.  It is well-known that locally one can find a finite set $L_1,\ldots, L_m$   of Noetherian operators 
such that $L_j\Phi=0$ on $Z$ if and only if $\Phi$ is in $\J$. The analogous statement for a polynomial ideal is a
keystone in the celebrated Fundamental principle
due to Ehrenpreis and Palamodov, see, e.g., \cite{Ho}.  
Each Noetherian operator with respect to $\J$ defines an intrinsic mapping
$\Lk\colon \Ok_X\to \Ok_Z$ by  
\begin{equation}\label{pluto}
\Lk (i^*\Phi)=j^* L\Phi.
\end{equation}
We say that $\Lk$ is a {\it Noetherian operator on $X$}.
It follows that locally there are 
 Noetherian operators $\Lk_0, \ldots, \Lk_m$ on $X$ such that  
\begin{equation}\label{ide}
\Lk_j\phi=0 \  \text{in}\  \Ok_Z, \   j=1,\ldots, m,  \ \   \text{if  and  only if}\  \ \phi=0 \   \text{in} \  \Ok_X.  
\end{equation}
Given $\Lk_j$ as in \eqref{ide}, following \cite{Sz} let us consider 
\begin{equation}\label{skata}
|\phi(z)|^2=\sum_0^m|\Lk_j\phi(z)|^2.
\end{equation}
Clearly $|\phi|=0$  in an open set if and only if $\phi=0$ there so \eqref{skata}
is a norm.
However, it depends on the choice of $\Lk_j$. For instance, \eqref{ide}  still holds 
if $\Lk_j$ are multiplied by any $h$ in $\Ok_Z$ that is generically nonvanishing on $Z$. 
The set of all Noetherian operators on $X$ is a left $\Ok_Z$-module, but it is not 
locally finitely generated since each derivation along $Z$ is Noetherian.

Our main result is the construction of an intrinsic coherent
$\Ok_Z$-sheaf $\N_X$ of Noetherian operators on $X$ where $Z$ is smooth;
see Theorem~\ref{thmA} below for the definition and precise statement.
In particular, if $\Lk_0, \ldots, \Lk_m$  are local generators for $\N_X$,
then \eqref{ide} holds.  Given such a set we define our norm
$|\cdot|_X$ by \eqref{skata}.  Clearly two sets of generators
give rise to equivalent norms.

At points where $Z$ is smooth, and in addition $\Ok_X$ is Cohen-Macaulay, one can 
represent $\Ok_X$ as a free $\Ok_Z$-module in a non-canonical way: In fact, if we have local
coordinates $(z,w)$ such that $Z=\{w=0\}$,  then there are monomials $1,w^{\alpha_1},\ldots, w^{\alpha_{\nu-1}}$
such that each 
$\phi$ in $\Ok_X$ has a unique representative
\begin{equation}\label{basic2}
\hat\phi(z,w)=\hat\phi_0(z)\otimes 1+\hat\phi_1(z)\otimes w^{\alpha_1}+\ldots+\hat\phi_{\nu-1}(z)\otimes w^{\alpha_{\nu-1}},
\end{equation}
where $\hat\phi_j$ are in $\Ok_Z$.  
it turns out that $|\cdot |_X$ is the smallest norm that, up to a constant, dominates 
\begin{equation}\label{basic4}
\big(|\hat\phi_0(z)|^2+\cdots +|\hat\phi_{\nu-1}(z)|^2\big)^{1/2}
\end{equation}
for all choices of coordinates and monomial bases, see Theorem~\ref{thmA}~(iii).

\smallskip 
In Section~\ref{pontus} we prove that each point $x\in Z_{sing}$ has a \nbh $\V$ in $Z$
such that $\N_X|_{\V\cap Z_{reg}}$ 
admits a coherent extension to $\V$. Unfortunately it is not clear whether these extensions
coincide on overlaps, but by a partition of unity we can extend our norm to the entire space $X$.

\smallskip
In \cite{AL} 
sheaves $\E_X^{0,*}$  of smooth $(0,*)$-forms and a $\dbar$-operator on $X$ were recently introduced.
The Noetherian operators extend to mappings $\E^{0,q}_X\to\E_Z^{0,q}$ and so 
our norm $|\cdot|_X$ extends to 
$(0,q)$-forms on $X$.  In particular,
if $\phi$ is a smooth function on $X$, then $|\phi|^2_X$ is a smooth function on $Z$.
One can thus discuss norm estimates for possible solutions to the $\dbar$-equation on $X$.
However, in this paper  
we focus on the completeness of  
$\Ok_X$:  

\begin{thm} \label{thmB}
Assume that $\phi_j$ is a sequence of holomorphic functions on $X$ that is a Cauchy sequence on each compact subset
with respect to the uniform norm induced by  $|\cdot |_X$.   Then there is a holomorphic function $\phi$ on $X$
such that $\phi_j\to \phi$ uniformly on compact subsets of $X$.
\end{thm}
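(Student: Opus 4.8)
The plan is to reduce the statement to a purely local one and then to identify the topology defined by $|\cdot|_X$ with the standard Fréchet topology on $\Ok_X$. Since $|\cdot|_X$ is a genuine norm (by \eqref{ide}, $|\phi|_X$ vanishes identically only when $\phi=0$), a sequence can have at most one limit; hence if I produce limits of $(\phi_j)$, uniform on compact subsets, over the members of an open cover of $Z$, these local limits automatically agree on overlaps and glue, by the sheaf property of $\Ok_X$, to a global $\phi\in\Ok_X$ with $\phi_j\to\phi$ uniformly on compact subsets of $X$ (each compact being covered by finitely many charts). Thus it suffices to work on a small Stein neighborhood $W$ carrying a local embedding $i\colon X\to\U\subset\Cn$ and a finite set of generators $\Lk_0,\dots,\Lk_m$ of $\N_X$, extended across $Z_{sing}$ as in Section~\ref{pontus}, satisfying \eqref{ide}; after passing to an equivalent norm I may take $|\phi|_X^2=\sum_0^m|\Lk_j\phi|^2$ on $W$.

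On $W$ the space $\Ok_X(W)=\Ok_\U(W)/\J(W)$ is a Fréchet space for the natural quotient topology of local uniform convergence of ambient representatives, because $\J$ is coherent and $W$ is Stein, so $\J(W)$ is closed in $\Ok_\U(W)$. This Fréchet space is complete, so the whole problem is to show that the topology induced by $|\cdot|_X$ coincides with the natural one. One inclusion is immediate: each $\Lk_j$ is a holomorphic differential operator, so $\Lk_j\phi=j^*L_j\Phi$ is estimated, via the Cauchy inequalities, by $\sup|\Phi|$ on a slightly larger set for any ambient representative $\Phi$; hence $\sup_K|\phi|_X$ is dominated by the natural seminorms, and the natural topology is the finer of the two.

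The crux is the reverse estimate, a \emph{bounded lifting}: that $|\cdot|_X$-smallness of $\phi$ forces an ambient representative $\Phi$ with correspondingly small $\sup|\Phi|$. Where $Z$ is smooth and $\Ok_X$ is Cohen--Macaulay this is clear from \eqref{basic2}: the canonical polynomial representative $\hat\phi=\sum_k\hat\phi_k(z)\otimes w^{\alpha_k}$ is an ambient lift whose sup is controlled by the coefficient norm $(\sum_k|\hat\phi_k|^2)^{1/2}$, and this coefficient norm is in turn dominated by $|\phi|_X$ by Theorem~\ref{thmA}(iii); conversely, applying the operators $L_j$ to $\hat\phi$ and using the Cauchy inequalities shows that $|\phi|_X$ is itself dominated by the coefficient norm on a slightly larger set. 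Hence on the dense open set $W_0\subset W$ where $Z$ is smooth and $\Ok_X$ is Cohen--Macaulay the two norms are equivalent; there the coefficients $\hat\phi_k^{(j)}$ of a $|\cdot|_X$-Cauchy sequence are uniformly Cauchy, converge in the complete space $\Ok_Z(W_0)$, and reassemble to a limit $\phi^\infty\in\Ok_X(W_0)$ with $\phi_j\to\phi^\infty$ locally uniformly in $|\cdot|_X$.

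It remains to propagate this across the thin analytic set $A=W\setminus W_0$ containing $Z_{sing}$ and the non-Cohen--Macaulay locus, and this is the step I expect to be the main obstacle: the map $\phi\mapsto(\Lk_0\phi,\dots,\Lk_m\phi)$ is only $\C$-linear (the $\Lk_j$ are differential operators, not $\Ok_Z$-homomorphisms), so the closedness of its image cannot be read off from coherence alone. The plan here is to use that the $\phi_j$ are genuine sections of $\Ok_X$ over all of $W$ and are locally $|\cdot|_X$-bounded, that near points of $A$ the norm is controlled by a local coherent extension of $\N_X$ furnished by Section~\ref{pontus}, and a removable-singularity theorem for sections of the pure-dimensional coherent sheaf $\Ok_X$: the limit $\phi^\infty$, already defined and bounded on $W\setminus A$, then extends across $A$ to a section of $\Ok_X(W)$, and the convergence $\phi_j\to\phi^\infty$ upgrades to all of $W$. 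Granting this extension, the natural and $|\cdot|_X$-topologies coincide on $W$, the space $\Ok_X(W)$ is therefore complete for $|\cdot|_X$, and the gluing of the first paragraph finishes the proof.
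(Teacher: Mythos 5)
Your reduction to a local statement, the Cauchy-estimate direction, and the argument on the locus where $Z$ is smooth and $\Ok_X$ is Cohen--Macaulay all match the paper (the last is Lemma~\ref{chcase} there, proved exactly via the coefficients of the representative \eqref{basic2}). The gap is the step you yourself flag as the main obstacle: the extension of the limit across the thin set $A$ where $\Ok_X$ fails to be Cohen--Macaulay (and across $Z_{sing}$). You invoke ``a removable-singularity theorem for sections of the pure-dimensional coherent sheaf $\Ok_X$'' applied to a limit that is ``defined and bounded on $W\setminus A$.'' No such theorem holds: Section~\ref{kokong} of the paper exhibits an $\Ok_X$-function on $X\setminus\{0\}$ all of whose Noetherian derivatives extend holomorphically across $0$ (so it is $|\cdot|_X$-bounded near $0$) and which nevertheless admits no extension to a section of $\Ok_X$ at $0$ (take $h=1$ in \eqref{struma2}; see Lemma~\ref{opium2}). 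Boundedness in the $|\cdot|_X$-norm on the complement of a codimension-$\ge 2$ set is therefore not a sufficient condition for extension, and the proposal does not supply any substitute for it.

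What actually saves the theorem is that the $\phi_j$ are \emph{global} sections, which imposes closed linear conditions (in the example, $\Lk\phi_j(0)=0$) that pass to the limit; the paper encodes these conditions via residue currents. Concretely, Lemma~\ref{ponny} only upgrades the limit to a \emph{meromorphic} function on $X$, and the holomorphicity is then obtained from the criterion $(f-\dbar)(\phi R)=0$ of Theorem~\ref{strong}, after proving the nontrivial convergence $\phi_jR\to\phi R$ (Lemma~\ref{partout}), which in turn uses the factorization $R=\alpha\mu$, the SEP, and the calculus of pseudomeromorphic and almost semi-meromorphic currents; the singular-$Z$ case further requires the resolution of Section~\ref{reso}. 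This machinery is not a technicality one can route around with a coherence or Riemann-extension argument, so as written the proposal does not prove the theorem outside the Cohen--Macaulay locus.
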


This statement  is well-known,
but non-trivial, in the reduced case,  see, e.g., \cite[Theorem 7.4.9]{Ho}.

\begin{remark}\label{kontur} 
In the recent paper \cite{AU} we find $L^p$-estimates of extensions of holomorphic
functions $\phi$ defined on a non-reduced subvariety $X$ of a strictly pseudoconvex domain $D$,
given that certain $L^p$-norms of  $|\phi|_X$ over $Z\cap D$ are finite. This generalizes results in 
\cite{Amar,Cum}, see also \cite{AM}, in the case when $X$ is reduced.
\end{remark}

The plan of the paper is as follows. In Section~\ref{basex} we discuss a simple example that illustrates
the basic idea in the definition of $\N_X$ where $Z$ is smooth. The general construction
of $\N_X$ relies on an idea due to Bj\"ork,  \cite{Bj},
to construct Noetherian operators from so-called Coleff-Herrera currents.  In Section~\ref{pottsork}
we recall the definition of such currents as well as some other basic facts that we need.  
The proof of  Theorem~\ref{thmB}  relies on some further residue theory that we recall in 
Sections~\ref{pm} and \ref{proofB}. In the latter section we also provide a proof
of Theorem~\ref{thmB}   in case $Z$ is smooth.
For the general case we need a kind of resolution of $X$ that is described in Section~\ref{reso},
and in Section~\ref{proofb} the proof of Theorem~\ref{thmB} is concluded.

\smallskip
\noindent {\bf Acknowledgement:}   I would like to thank Richard L\"ark\"ang and H\aa kan Samuelsson~Kalm
for valuable discussions related to this paper.

\section{A basic example}\label{basex}
 Let us first consider the simplest example of a non-reduced space, the space $X$ with underlying
reduced space $Z=\{w=0\}\subset \C_{z,w}^2$ defined by the ideal $(w^{m+1})$.  Each holomorphic function
$\phi$ on $X$ has a unique representative 
$$
\hat\phi(z,w)=\hat\phi_0(z)\otimes 1+\hat\phi_1(z)\otimes w+\cdots +\hat\phi_{m-1}(z)\otimes w^{m},
$$
where $\hat\phi_j$   are holomorphic on $\C_z$.  Let us therefore tentatively define the pointwise norm
\begin{equation}\label{polly0}
|\phi(z)|_0^2=|\hat\phi_0(z)|^2+|\hat\phi_1(z)|^2+ \cdots +|\hat\phi_{m}(z)|^2, \quad  z\in \C_z.
\end{equation}
To see how \eqref{polly0} behaves under a change of coordinates, first notice that 
for $k=0,\ldots,m$ we have Noetherian operators $\Lk_k\colon \Ok_X\to\Ok_Z$ defined by
$\Lk_k\phi(z)=(\partial^k  \phi/\partial w^k)(z,0)/k!$,  
where $\Phi$ is any representative in $\C^2_{z,w}$ 
of $\phi$.  In what follows we will write $\phi$ rather than $\Phi$ in this expression. 
Notice that  $\hat\phi_k(z)=\Lk_k\phi(z)$.
%
%
If we introduce new coordinates $\zeta_b,\eta_b$\   such that 
$
w=\eta_b, \ \   z=\zeta_b+b\eta_b,
$
where $b$ is a constant, then the ideal is $(\eta_b^{m+1})$, so it is
natural to consider the norm
\begin{equation}\label{bnorm}
|\phi(\zeta)|_b^2=|\Lk_{b,0}\phi(\zeta)|^2+|\Lk_{b,1}\phi(\zeta)|^2+\cdots +|\Lk_{b,m}\phi(\zeta)|^2,
\end{equation}
where $\Lk_{b,k}\phi(z)=(\partial^k  \phi/\partial \eta_b^k)(\zeta_b,0)/k!$.
%
Since  
$$
\frac{\partial}{\partial \eta_b}=\frac{\partial}{\partial w}+b\frac{\partial}{\partial z},
$$
we have that  
\begin{equation}\label{motto}
\Lk_{b, k}\phi(z)=
\sum_{j=0}^k  b^j {{k}\choose{j}} \frac{\partial^k\phi}{\partial z^j \partial w^{k-j}}(z,0).
\end{equation}

\begin{lma} If we choose any distinct $b_0, \cdots, b_m$, one of which may be $0$, then
the $\Ok_Z$-module generated by  $\Lk_{b_\ell, k}$, $\ell,k=0, \ldots, m$,
coincides with the 
$\Ok_Z$-module generated by  $\partial^k/\partial z^j \partial w^{k-j}$, $0\le j\le k\le m $.
\end{lma}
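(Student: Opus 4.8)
The plan is to exploit that, by \eqref{motto}, each $\Lk_{b_\ell,k}$ is homogeneous of total differential order $k$; hence no mixing between distinct values of $k$ occurs, and the assertion decouples into a single finite-dimensional linear algebra problem for each fixed $k$. Writing $D_{j,k}$ for the operator $\partial^k/\partial z^j\partial w^{k-j}$, formula \eqref{motto} reads
\begin{equation*}
\Lk_{b_\ell,k}=\sum_{j=0}^k b_\ell^{\,j}\binom{k}{j}\, D_{j,k}.
\end{equation*}
First I would record this, for each fixed $k$ and $\ell=0,\ldots,k$, as the statement that the $k+1$ operators $\Lk_{b_0,k},\ldots,\Lk_{b_k,k}$ are obtained from $D_{0,k},\ldots,D_{k,k}$ by the constant $(k+1)\times(k+1)$ matrix $M^{(k)}$ with entries $M^{(k)}_{\ell j}=b_\ell^{\,j}\binom{k}{j}$.

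The inclusion of the $\Ok_Z$-module generated by the $\Lk_{b_\ell,k}$, $\ell,k=0,\ldots,m$, into the one generated by the $D_{j,k}$, $0\le j\le k\le m$, is immediate from the displayed identity, which exhibits each $\Lk_{b_\ell,k}$ as a constant, hence in particular $\Ok_Z$-linear, combination of generators of the latter module. For the reverse inclusion I would invert $M^{(k)}$: factoring $M^{(k)}=V^{(k)}\Delta^{(k)}$ with $V^{(k)}_{\ell j}=b_\ell^{\,j}$ a Vandermonde matrix and $\Delta^{(k)}=\mathrm{diag}\big(\binom{k}{0},\ldots,\binom{k}{k}\big)$, one gets $\det M^{(k)}=\prod_{0\le \ell<\ell'\le k}(b_{\ell'}-b_\ell)\cdot\prod_{j=0}^k\binom{k}{j}$, which is nonzero precisely because the $b_\ell$ are distinct. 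Inverting then writes each $D_{j,k}$ as a constant combination of $\Lk_{b_0,k},\ldots,\Lk_{b_k,k}$, so every $D_{j,k}$ lies in the module generated by the $\Lk_{b_\ell,k}$. Assembling the two inclusions for each $k$ and summing over $k=0,\ldots,m$ yields the claimed equality.

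The argument is elementary, and the only step needing any care --- the ``main obstacle'', though a slight one --- is recognizing $M^{(k)}$ as a Vandermonde matrix rescaled by the nonzero binomial factors, so that its invertibility over $\C$ is exactly the hypothesis that the $b_\ell$ be distinct. I would also note that, since $k\le m$, the $k+1$ distinct constants required at order $k$ are always available among the $m+1$ given values, so the operators $\Lk_{b_\ell,k}$ with $\ell>k$ are simply redundant.
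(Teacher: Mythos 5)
Your proof is correct and follows essentially the same route as the paper: fix $k$, observe via \eqref{motto} that the passage from the operators $\binom{k}{j}\,\partial^k/\partial z^j\partial w^{k-j}$ to $\Lk_{b_0,k},\ldots,\Lk_{b_k,k}$ is given by the Vandermonde matrix $(b_\ell^{\,j})$, and invert it using that the $b_\ell$ are distinct. Your extra observations (the explicit determinant, and the redundancy of the operators with $\ell>k$) are consistent with the paper's remark that $\N_X$ is already generated by $\Lk_{b_\ell,k}$ with $0\le\ell\le k\le m$.
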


\begin{proof}
For fixed $k\le m$,  let
$$
x_j={{k}\choose{j}} \frac{\partial^k}{\partial z^j\partial w^{k-j}}\phi, \ \ 0\le j\le k.
$$ 
It follows from the well-known invertibility of a generic Vandermonde matrix that
$x_j$ can be expressed as linear combinations of $\Lk_{b_\ell, k}$,  $\ell=0, \ldots, k$. 
\end{proof}

We define  $\N_X$ to be this $\Ok_Z$-module of Noetherian operators. In view of the second set of
generators it is coordinate invariant. In follows from the proof that 
$\N_X$ is in fact generated by $\Lk_{b_\ell, k}$, $0\le \ell \le k\le m$. 
We get the pointwise norm
\begin{equation}\label{polly}
|\phi(z)|_X^2=\sum_{0\le j,k\le m}|\Lk_{b_j, k}\phi(z)|
\sim \sum_{0\le j\le k\le m}\Big| \frac{\partial^k\phi}{\partial  z^j\partial w^{k-j}}(z,0)\Big|^2.
\end{equation} 
%
%

\section{Some preliminaries}\label{pottsork}
In this section we gather definitions and known results that will be used in this paper.

\subsection{Coleff-Herrera currents}\label{sep}
Assume that $j\colon Z\to \U\subset\C^N$ is an embedding of a reduced variety $Z$ of
pure dimension $n$. 
We say that a germ of a current $\mu$ in $\U$ of bidegree $(N,N-n)$
is a Coleff-Herrera current with support on $Z$, $\mu\in\CH^Z_\U$, if it is $\dbar$-closed, 
is annihilated by $\bar\J_Z$ (i.e., $\bar h\mu=0$ for $h$ in $\J_Z$)
 and in addition has the {\it standard extension property} SEP.  The latter
condition can be defined in the following way:
Let $\chi$ be any smooth function
on the real axis that is $0$ close to the origin and $1$ in a \nbh\ of $\infty$. Then $\mu$ has the SEP
if for any holomorphic function $h$ (or tuple $h$ of holomorphic functions) 
whose zero set $Z(h)$ has positive codimension on $Z$, 
$\chi(|h|/\epsilon)\mu\to \mu$ when $\epsilon\to 0$. 
The intuitive meaning is that $\mu$ does not carry any mass on the set $Z\cap Z(h)$.  
See, e.g., \cite[Section~5]{ACH} for a discussion.

\begin{ex}[Coleff-Herrera product]\label{apa1}
If $f_1,\ldots,f_{N-n}$ are holomorphic functions in $\U$ whose common zero set is $Z$, then
the Coleff-Herrera product
\begin{equation}
\dbar\frac{1}{f}:=\dbar\frac{1}{f_{N-n}}\w\cdots\w\dbar\frac{1}{f_1}
\end{equation}
can be defined in various ways by suitable limit processes.  Its annihilator is precisely
the ideal $\J(f)=(f_1,\ldots, f_{N-n})$. Moreover, if $A$ is a holomorphic $N$-form, then
$A\w \dbar(1/f)$ is a Coleff-Herrera current.
\end{ex}

\begin{prop}\label{apa2}
If $f_j$ are as in Example~\ref{apa1}, $\mu$ is in $\CH_\U^Z$ and $\J(f)\mu=0$, then there is
(locally) a holomorphic $N$-form $A$ such that 
\begin{equation}\label{apa3}
\mu=A\w  \dbar\frac{1}{f}.
\end{equation}
\end{prop}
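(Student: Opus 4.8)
The plan is to reduce to the locus where $Z$ is smooth, settle that case by an essentially linear-algebra computation, and then propagate the representation across $Z_{sing}$. The soft tool for both the reduction and the final step is that a Coleff--Herrera current supported on a proper analytic subset $W\subsetneq Z$ must vanish: choosing $h$ holomorphic in $\U$ that vanishes on $W$ but on no irreducible component of $Z$, the cut-off $\chi(|h|/\epsilon)$ is identically $0$ on the support of such a current, while the SEP gives $\chi(|h|/\epsilon)\mu\to\mu$; hence $\mu=0$. Granting this, it will suffice to produce a single holomorphic $N$-form $A$ in $\U$ such that $\mu=A\w\dbar\frac1f$ merely on $\U\setminus Z_{sing}$: both sides are Coleff--Herrera currents (the right-hand one by Example~\ref{apa1}), so their difference is a Coleff--Herrera current supported on $Z_{sing}$, which has dimension $<n$, and thus vanishes.

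Near a point of $Z_{reg}$ I would work in coordinates $(z,w)\in\C^n_z\times\C^{N-n}_w$ with $Z=\{w=0\}$. There the structure theorem for Coleff--Herrera currents on a smooth variety writes $\mu$ as a finite sum $\sum_\alpha c_\alpha(z)\,dz\w dw\w\dbar(1/w^{\alpha+\1})$ with holomorphic $c_\alpha$, and the condition $\J(f)\mu=0$ cuts this down: since $\Ok_X=\Ok_\U/\J(f)$ is a finite free $\Ok_Z$-module and $\operatorname{ann}\dbar\frac1f=\J(f)$, the $\Ok_Z$-linear map $A\mapsto A\w\dbar\frac1f$ is injective, and a rank count (the residue pairing identifies the $\J(f)$-annihilated currents with the $\Ok_Z$-dual of $\Ok_X$) shows it is onto the $\J(f)$-annihilated Coleff--Herrera currents. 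This is exactly the higher-dimensional version of the matching carried out in the basic example of Section~\ref{basex}. One thus obtains, near each point of $Z_{reg}$, a holomorphic $A$ with $\mu=A\w\dbar\frac1f$, unique modulo $\J(f)$; these local choices glue to a section $s$ of the coherent sheaf $\Omega^N_\U/\J(f)\Omega^N_\U$ over $\U\setminus Z_{sing}$.

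The remaining step --- and the one I expect to be the crux --- is to extend $s$ across $Z_{sing}$ to a section over all of $\U$, which may then be lifted to an honest holomorphic $N$-form $A$ since $\Omega^N_\U$ is free. When $Z$ is normal, $Z_{sing}$ has codimension $\ge 2$ in $Z$ and the extension follows from the maximal Cohen--Macaulay (in particular $S_2$) property of $\Omega^N_\U/\J(f)\Omega^N_\U$, which holds because $\J(f)$ is a complete intersection. In general the extension is precisely what Bj\"ork's structure theorem for Coleff--Herrera currents \cite{Bj} provides: the SEP of $\mu$ forces its structure coefficients to be genuinely holomorphic in a full neighbourhood of the given point, including over $Z_{sing}$, so the representation $\mu=A\w\dbar\frac1f$ persists with $A$ holomorphic. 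With such an $A$ in hand, the support argument of the first paragraph promotes the identity from $\U\setminus Z_{sing}$ to all of $\U$.
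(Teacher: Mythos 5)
The paper does not actually prove this proposition: it is attributed to Bj\"ork and the reader is referred to \cite{Bj} and \cite[Sections 3 and 4]{ACH}, so your attempt can only be measured against the arguments in that literature, and against that standard it has a genuine gap at exactly the point you yourself flag as the crux. Your reduction to $Z_{reg}$ via the SEP is correct (a current with the SEP with respect to $Z$ and support in a proper subvariety of $Z$ vanishes), and on $Z_{reg}$ the plan is sound in outline; but the surjectivity of $A\mapsto A\w\dbar\frac{1}{f}$ onto the $\J(f)$-annihilated Coleff--Herrera currents does not follow from injectivity plus ``a rank count'': an injective $\Ok_Z$-linear map between free modules of the same rank need not be onto (multiplication by a coordinate $z$ on $\Ok_Z$). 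What is actually needed is the perfectness of the local residue pairing $(\Phi,A)\mapsto \pi_*\big(\Phi A\w\dbar\frac{1}{f}\big)$ between $\Ok_\U/\J(f)$ and $\Omega^N_\U/\J(f)\Omega^N_\U$, i.e.\ local duality for the complete intersection; this is classical but must be invoked as such, not deduced from a dimension count.

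The real problem is the extension across $Z_{sing}$. Your $S_2$ argument is fine when $Z$ is normal, but when $Z_{sing}$ has codimension one in $Z$ you fall back on ``Bj\"ork's structure theorem for Coleff--Herrera currents,'' asserting that the SEP forces the structure coefficients of $\mu$ to be holomorphic across $Z_{sing}$. Over $Z_{sing}$ there is no local normal form and hence no ``structure coefficients,'' so this sentence is not an argument; and the theorem of Bj\"ork you invoke is, in substance, the very factorization to be proved --- the paper credits precisely this statement to Bj\"ork. The known proofs do not obtain $A$ on $Z_{reg}$ and then extend it by a removable-singularity argument; they either identify $\{\mu\in\CH^Z_\U:\ \J(f)\mu=0\}$ with the sheaf $\mathcal{E}xt^{N-n}_{\Ok_\U}(\Ok_\U/\J(f),\Omega^N_\U)$ (the Dickenstein--Sessa/Bj\"ork canonical isomorphism) and compute the latter by the Koszul complex, which produces the class of $A$ in a full neighbourhood from the start, or they run a division-formula argument with the Koszul currents $U$ and $R=\dbar\frac{1}{f}$. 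As written, your proof establishes the proposition only for normal $Z$.
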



The statements in Example~\ref{apa1} are due to Coleff-Herrera, Dickenstein-Sessa, and Passare
in the 80's, whereas Proposition~\ref{apa2} is due to  Bj\"ork, \cite{Bj}.
Proofs and further discussions and references
can be found in \cite{Bj} and \cite[Sections 3 and 4]{ACH}.




\subsection{Embeddings of a non-reduced space}\label{kork}
Let $i\colon X\to \U\subset\C^N$ be a local embedding of a non-reduced space of pure dimension $n$
and consider the sheaf $\Homs_{\Ok_\U}(\Ok_\U/\J, \CH^Z_\U)$,
  that is, the sheaf of currents $\mu$ in $\CH^Z_\U$ such that $\J\mu=0$.  It is indeed a sheaf over $\Ok_X=\Ok_\U/\J$ so we can write
  $\Homs_{\Ok_X}(\Ok_X, \CH^Z_\U)$.   
The  duality principle,
\begin{equation}\label{dual}
\Phi\in\J \   \text{if and only if} \  \Phi\mu=0  \ \text{for all}\  \mu\in \Homs_{\Ok_\U}(\Ok_\U/\J, \CH^Z_\U),
\end{equation}
is known since long ago, see, e.g., \cite[(1.6)]{Aext}.

\smallskip
Given a point $x$ on $X$ there is a minimal number $\hat N$ such that there is a local embedding 
$j\colon X\to \U'\subset\C^{\hat N}_z$ in a \nbh of $x$. Such a minimal embedding is unique up to biholomorphisms. Moreover,
any embedding $i\colon X\to \U\subset\C^N$,  factorizes so that, in a \nbh of $x$, 
\begin{equation}\label{fact}
X\stackrel{j}{\to}\U'\stackrel{\iota}{\to}\U:=\U'\times\U''\subset\C^N,
\quad i=\iota\circ j,
\end{equation}
where $j$ is minimal, $\U''$ is an open subset of $\C^{m}_{w''}$, $m=N-\hat N$,
$\iota(z)=(z,0)$, 
and the ideal in $\U$ is
$\J=\hat\J\otimes 1+(w''_1,\ldots, w''_m)$. 
It follows from \cite[Lemma 4]{AL} that the natural mapping
\begin{equation}\label{assi}
\iota_*\colon 
\Homs_{\Ok_X}(\Ok_X, \CH^Z_{\U'}) \to \Homs_{\Ok_X}(\Ok_X, \CH^Z_\U)
\end{equation}
 is an isomorphism, naturally expressed as $\mu'\mapsto \mu=\mu'\otimes[w''=0]$.
Here $[w''=0]$ denotes the current of integration over $\{w''=0\}$. 

\begin{remark} 
The equivalence classes in \eqref{assi} can be considered as elements of an intrinsic
$\Ok_X$-sheaf ${\omega}^n_X$ of $\dbar$-closed $(n,0)$-form on $X$, introduced in
 \cite{AL}, so that $i_*\colon {\omega}^n_X\to \Homs_{\Ok_X}(\Ok_X, \CH^Z_\U)$ is an isomorphism. 
In case $X$ is reduced,  $\omega_X^n$ is the classical Barlet sheaf, \cite{Barlet}.  
\end{remark}

\subsection{$\Ok_X$ as an $\Ok_Z$-module}\label{kolsyra}
Let $x\in X$ be a point where  $Z$ is smooth and let $i\colon X\to \U$ be a local embedding
with  coordinates $(z,w)$ in $\U$ so that
$Z=\{w=0\}$.  Then there is a finite set of monomials 
$1, w^{\alpha_1}, \ldots, w^{\alpha_{\nu-1}}$   such that  each element $\phi$ in $\Ok_X=\Ok_\U/\J$ has
a representative $\hat\phi$ in $\Ok_\U$ of the form \eqref{basic2},
where $\hat\phi_j$ are in $\Ok_Z$.  
Given a minimal set of such monomials 
the representation \eqref{basic2} is unique if and only if $\Ok_X$ is Cohen-Macaulay at $x$,
see, e.g., \cite[Proposition~3.1]{AL}.  
 %
Notice however that even when $\Ok_X$ is Cohen-Macaulay 
this representation  of $\Ok_X$ as a free $\Ok_Z$-module is not
unique; it depends on the choice of coordinates $(z,w)$, cf.~Section~\ref{basex},
and in general also on the choice of minimal set of monomials $w^{\alpha_j}$. 
Also \eqref{basic4} depends on these choices. 



\subsection{Local representation of certain currents}\label{loko}
%

Consider an open subset $\U=\U'\times\U'' \subset \C^n_z\times \C_w^{N-n}$,
let $Z=\{(z,0), \ z\in \U' \}$,  and let
$\pi\colon \U\to \U'$ be the natural projection $(z,w)\mapsto z$. 
We let $p=N-n$, $dz=dz_1\w\ldots\w dz_n$,  $dw=dw_1\w\ldots\w dw_{p}$
and use the short-hand notation
$$
\dbar\frac{dw}{w^{m+\1}}=\dbar\frac{dw_1}{w_1^{m_1+1}}\w\dbar\frac{dw_2}{w_2^{m_2+1}}\w\ldots
\w\dbar\frac{dw_p}{w_p^{m_p+1}},
$$
if $m=(m_1,\ldots,m_p)\in\Na^p$ is a multiindex. 
It is well-known, and follows immediately from the one-variable case, that if $\xi(w)$ is any smooth function then
\begin{equation}\label{snok}
\dbar\frac{dw}{w^{m+\1}}.\xi= \frac{(2\pi i)^p}{m!}
\Big(\frac{\partial}{\partial w}\Big)^m\xi(0), 
\end{equation}
where
\begin{equation}\label{mas}
\Big(\frac{\partial}{\partial w}\Big)^m=\frac{\partial^{|m|}}{\partial w_1^{m_1}\cdots \partial w_p^{m_p}}.
\end{equation}

Assume that $\tau$\  is a $(N,N-n+k)$-current in $\U$ with support on 
$Z$ that is annihilated by all $\bar w_j$ and $d\bar w_\ell$.  
Then we have a (locally finite) 
unique representation
\begin{equation}\label{pelargon}
\tau=\sum_\alpha  \tau_\alpha(z)\w dz\otimes \dbar\frac{dw}{w^{\alpha+\1}},
\end{equation}
where $\tau_\alpha\w dz$   are $(n,k)$-currents on $Z$.  In fact,
\begin{equation}\label{pelargon2}
(2\pi i)^p \tau_\alpha\w  dz=\pi_*(w^\alpha\tau),
\end{equation}
cf.~\cite[(2.11)]{AL}. 
Clearly $\dbar\tau=0$ if and only if $\dbar \tau_\alpha=0$ for all $\alpha$. In particular,
$\tau$ is a Coleff-Herrera current if and only if all $\tau_\alpha$ are holomorphic functions. 
We shall only consider  $\tau$ such that $\tau_\alpha$ are smooth $(0,*)$-forms on $Z$.

\subsection{Smooth functions and $(0,q)$-forms on $X$}\label{krokodil}
The definitions and statements in this subsection are from \cite[Section 4]{AL}.
Consider a local  embedding $i\colon X\to \U\subset \C^N$ as before.  
If $\Phi$ is a smooth $(0,*)$-form in $\U$, $\Phi\in\E_\U^{0,*}$,  we say that $i^*\Phi=0$,
or $\Phi\in\Kers i^*$, 
if  
$$
\Phi\w\mu=0, \quad  \mu\in \Homs_{\Ok_X}(\Ok_X,\CH_\U^Z).
$$
In case $\Phi$ is holomorphic, this is equivalent to that $\Phi\in\J$ in view of the duality principle 
\eqref{dual}.   We let
$$
\E_X^{0,*}:=\E_\U^{0,*}/\Kers i^*.
$$
be the sheaf of smooth $(0,*)$-forms on $X$. Thus we have a well-defined surjective mapping
$i^*\colon \E_\U^{0,*}\to \E_X^{0,*}$.
By a standard argument one can check that this
definition is independent of the choice of embedding.

\smallskip
One can  verify that where $Z$ is smooth, $\Phi$ is in $\Kers i^*$ if and only if it can be written as a finite sum
of terms of the form %
\begin{equation}\label{jupiter}
\Phi_1\Psi_1+\Phi_2\bar \Psi_2+\Phi_3\w d \bar\Psi_3,
\end{equation}
where $\Phi_j$ are smooth forms,
$\Psi_1\in\J$ and $\Psi_2,\Psi_3\in\J_Z$.   

If $Z$ is smooth and we have local coordinates as in Section~\ref{kolsyra}, then for each $\phi$ in
$\E_X^{0,*}$ there is a representative
$\hat\phi$ in $\E_\U^{0,*}$ of the form \eqref{basic2} where $\hat\phi_j(z)$ are in $\E_Z^{0,*}$. Moreover,
if the set of monomials is minimal and $\Ok_X$ is Cohen-Macaulay, then the representation
is unique.

\section{The sheaf $\N_X$ where $Z$ is smooth}\label{balja}

Assume that we have an embedding
\begin{equation}\label{grus}
i\colon X\to\U\subset\C^N
\end{equation}
and that the underlying reduced space $Z$ is smooth.
As before $X$ has pure dimension $n$ and $p=N-n$.
Let $x$ be a point on $Z$ and let
$\pi\colon \U \to Z$ be a holomorphic submersion onto $Z$ in a small
\nbh $\V\subset\U$ of $x$.  By definition this means that there are local coordinates 
$(z,w)$ at $x$ such that $w_1,\ldots,w_{p}$ generates $\J_Z$   and 
$\pi(z,w)=z$.  
Our sheaf $\N_X$ of Noetherian operators is defined in the following result which is our
first main theorem.

\begin{thm} \label{thmA}
Let $X$ be a non-reduced space of pure dimension such that its underlying space 
$Z$ is smooth.  

\smallskip
\noindent (i) Let \eqref{grus}  be an embedding and let
$\pi\colon \U \to Z$ be a local holomorphic submersion onto $Z$. 
For each $\mu$ in $\Homs_{\Ok_X}(\Ok_X, \CH^Z_\U)$ and each nonvanishing
holomorphic $n$-form $dz$ there is a Noetherian operator $\Lk$ on $X$ such that
\begin{equation}\label{ara1}
dz\,  \Lk\phi=\pi_*(\phi\mu).
\end{equation}
Together these  
$\Lk$ form a coherent $\Ok_Z$-module
$\N_{X,\pi}$.  Moreover, $\Lk\phi=0$ for all $\Lk$ in $\N_{X,\pi}$ in a \nbh of a given point
if and only if $\phi=0$ there.   

\smallskip

\noindent   (ii)   If $\pi^\ell$ is a suitable finite number of generic  holomorphic submersions, then 
the associated $\Ok_Z$-modules $\N_{X,\pi^\ell}$ together generate a coherent $\Ok_Z$-module $\N_X$ such that
$\N_{X,\pi}\subset\N_X$ for any local submersion $\pi$.

\smallskip

\noindent  (iii)     Locally, where $\Ok_X$ is Cohen-Macaulay, the resulting norm $|\phi|_X$, as defined in the 
introduction,   is the smallest norm that dominates, up to constants,
any expression \eqref{basic4}.
\end{thm}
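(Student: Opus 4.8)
The plan is to prove Theorem~\ref{thmA}~(iii) by establishing two inequalities: that $|\phi|_X$ dominates each expression \eqref{basic4} up to a constant, and that $|\phi|_X$ is dominated by the maximum of finitely many such expressions, so that any norm dominating all \eqref{basic4} must dominate $|\phi|_X$.

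First I would fix local coordinates $(z,w)$ with $Z=\{w=0\}$ and choose the associated submersion $\pi(z,w)=z$. Under the Cohen--Macaulay hypothesis, by Section~\ref{kolsyra} there is a minimal monomial basis $1,w^{\alpha_1},\ldots,w^{\alpha_{\nu-1}}$ giving the unique representation \eqref{basic2}, and the corresponding expression \eqref{basic4} is $(\sum_j|\hat\phi_j(z)|^2)^{1/2}$. The key computation is to identify the coefficients $\hat\phi_j$ with values of Noetherian operators: applying the operators $\Lk_k\phi=(\partial/\partial w)^k\phi(z,0)/k!$ (which lie in $\N_{X,\pi}\subset\N_X$ by part~(i), since they arise from the currents $\dbar(dw/w^{\cdot})$ via \eqref{snok}) to the representative \eqref{basic2} recovers, by an upper-triangular linear system indexed by the partial order on multi-indices, the coefficients $\hat\phi_j$. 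Since this linear system has an invertible (unipotent-type) coefficient matrix, each $\hat\phi_j$ is an $\Ok_Z$-linear combination of finitely many $\Lk\phi$ with $\Lk\in\N_X$, so $(\sum_j|\hat\phi_j|^2)^{1/2}\lesssim|\phi|_X$. This gives the domination in one direction for the chosen coordinates and basis, and since the argument is uniform in the choices, for all of them.

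For the reverse direction I must show $|\phi|_X\lesssim\max$ over finitely many expressions \eqref{basic4}, which is the heart of the minimality claim. Here I would use part~(ii): $\N_X$ is generated by the modules $\N_{X,\pi^\ell}$ for finitely many generic submersions $\pi^\ell$, and by part~(i) each generator of $\N_{X,\pi^\ell}$ has the form $dz\,\Lk\phi=\pi^\ell_*(\phi\mu)$. The plan is to express each such $\Lk\phi$, via the local representation \eqref{pelargon}--\eqref{pelargon2} of the relevant Coleff--Herrera current $\mu$ in the coordinates adapted to $\pi^\ell$, as an $\Ok_Z$-linear combination of the basis coefficients $\hat\phi_j^\ell$ associated to the coordinate system and monomial basis for that $\pi^\ell$. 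Then $|\Lk\phi|\lesssim(\sum_j|\hat\phi_j^\ell|^2)^{1/2}$, which is one of the expressions \eqref{basic4}; summing over the finitely many generators of $\N_X$ yields $|\phi|_X\lesssim\max_\ell(\sum_j|\hat\phi_j^\ell|^2)^{1/2}$, and hence $|\phi|_X$ is dominated by any norm that dominates all \eqref{basic4}.

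The main obstacle I anticipate is the bookkeeping in passing between different coordinate systems and their monomial bases while keeping the coefficient matrices of the linear relations holomorphic and invertible on $Z$. Part~(i) guarantees that $\pi_*(\phi\mu)$ produces a genuine Noetherian operator for each $\mu$, and \eqref{snok} together with \eqref{pelargon2} gives the explicit pairing against the basis monomials; but to turn the abstract coherence of $\N_X$ into a concrete \emph{finite} set of generators whose action is controlled by finitely many expressions \eqref{basic4}, I need the generic submersions of part~(ii) to be chosen so that the relevant Vandermonde-type or triangular matrices are invertible simultaneously in a neighborhood, exactly as in the Vandermonde argument of the Lemma in Section~\ref{basex}. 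Ensuring the constants can be taken uniform on compact sets, and that minimality (not merely equivalence) holds, is the delicate point; once the explicit linear-algebraic dictionary between $\{\Lk\phi\}$ and $\{\hat\phi_j^\ell\}$ is in place, both inequalities follow and together they characterize $|\phi|_X$ as the smallest norm dominating all \eqref{basic4}.
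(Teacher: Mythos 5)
Your proposal only addresses part (iii), taking parts (i) and (ii) as given, so I will comment on that part. Your second inequality --- that $|\phi|_X$ is dominated by a finite sum of expressions \eqref{basic4} coming from the finitely many generic submersions of part (ii) --- follows essentially the paper's route: one expands $\phi\mu$ as in \eqref{kolibri0} and observes that each coefficient $b_\gamma=(2\pi i)^{-p}\pi_*(w^\gamma\phi\mu)$ is a holomorphic linear combination of the $\hat\phi_j$, and that by Proposition~\ref{pollinera} the operators $\Lk_{\mu_k,\gamma,\ell}$ generate $\N_X$. That half is fine.

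The first inequality, $\sum_j|\hat\phi_j|\lesssim|\phi|_X$, is where your argument breaks. You claim that the operators $\Lk_k\phi=(\partial/\partial w)^k\phi(z,0)/k!$ lie in $\N_{X,\pi}$ ``since they arise from the currents $\dbar(dw/w^{\gamma+\1})$''. But those currents belong to $\Homs_{\Ok_X}(\Ok_X,\CH^Z_\U)$ only if they are annihilated by $\J$, which holds for the full range of $\gamma$ only when $\J$ is itself the complete intersection $\langle w^{M+\1}\rangle$. In general the plain monomial derivatives are not Noetherian with respect to $\J$: for $\J=\langle w_1^2,w_2^2,w_1w_2,w_1z_2-w_2z_1\rangle$ of Section~\ref{kokong} one has $\partial(w_1z_2-w_2z_1)/\partial w_1|_{w=0}=z_2\not\equiv0$, and correspondingly only the combination $z_1\partial/\partial w_1+z_2\partial/\partial w_2$ belongs to $\N_X$, not $\partial/\partial w_1$ itself. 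So your ``unipotent system'' recovers $\hat\phi_j$ from the particular representative $\hat\phi$, but not as values of operators in $\N_X$, and the desired bound does not follow. Note also that you never use the Cohen--Macaulay hypothesis for this inequality, which is a warning sign, since this is exactly where it enters. The paper's Proposition~\ref{kokos} instead pairs $\phi$ with actual generators $\mu$ of $\Homs_{\Ok_X}(\Ok_X,\CH^Z_\U)$, notes that the resulting coefficients $b_\gamma$ are values of the genuine Noetherian operators $\Lk_{\mu,\gamma}\in\N_X$, and invokes \cite[Lemma~4.11]{AL} --- this is where Cohen--Macaulayness is used --- to produce a holomorphic matrix $S$ with $(\hat\phi_j)=S(b_\gamma)$, yielding $\sum_j|\hat\phi_j|\lesssim\sum_\gamma|b_\gamma|\lesssim|\phi|_X$. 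You would need to replace your step with this (or an equivalent) argument.
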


 In other words, all mappings of the form $\phi\mapsto \pi_*(\phi\mu)$ together 
 form a coherent $\Ok_Z$-sheaf $\widehat \N_X$ of $K_X$-valued Noetherian operators
  and $\N_X=\widehat\N_X\otimes K_X^{-1}$. 
 The rest of this section will be devoted to the proof of this theorem.

\subsection{Proof of Theorem~\ref{thmA}~(i)}
Assume that we have coordinates $(z,w)$ in an open set
$\V\subset\subset \U$ so that $\pi(z,w)=z$. 
Let $\mu$ be a current in  $\Homs_{\Ok_X}(\Ok_X, \CH^Z_\U)$.
By the  Nullstellensatz there is a multiindex $M$ such that $w^{M_j+1}\mu=0$ for each $j$.
Let
\begin{equation}\label{grobian}
\hat\mu=\frac{1}{(2\pi i)^p}\dbar\frac{dw}{w^{M+\1}}.
\end{equation}
In view of Proposition~\ref{apa2}, possibly after shrinking $\V$, 
there is a holomorphic function $a(z,w)$ 
such that 
\begin{equation}\label{postis}
\mu= a dz \w \hat\mu 
\end{equation}
in $\V$. 
It follows that $\Phi a dz\w \hat\mu =\Phi\mu=0$ if $\Phi$ is in $\J$ so that
$\phi a dz\w \hat\mu$ is well-defined for $\phi$ in $\Ok_X$. 
%
Let $\Phi$ be holomorphic in $\U$ such that $\phi=i^*\Phi$. Then, cf.\ \eqref{snok},  
\begin{equation}\label{skvader}
dz\, \Lk\phi=\pi_*(\phi\mu)= \pi_* (\Phi a dz\w \hat\mu )=dz
\frac{1}{M!}\big(\frac{\partial}{\partial w}\big)^M (a\Phi)\Big|_{w=0}. 
\end{equation}
Thus $\phi\mapsto \Lk\phi$ is a Noetherian operator $\Ok_X\to \Ok_Z$ on $X$
induced, cf.\  \eqref{pluto},  by the Noetherian differential operator
\begin{equation}\label{skvader99}
L\Phi=\frac{1}{M!}\big(\frac{\partial}{\partial w}\big)^M(a\Phi)
\end{equation}
in $\U$. 
If  
$\xi$ in $\Ok_Z$, then 
$$
dz \, \xi\Lk\phi=\xi\pi_*(\phi\mu)=\pi_*(\phi \pi^*\xi\mu)
$$
and since $\pi^*\xi$ is holomorphic in $\V$, 
$\pi^*\xi\mu$ is in $\Homs_{\Ok_X}(\Ok_X, \CH^Z_\U)$.  Thus $\N_{X,\pi}$ is an $\Ok_Z$-module. 

For each $\mu$ and multiindex $\gamma\le M$, define $\Lk_{\mu,\gamma}$ by
\begin{equation}\label{pucko}
dz\, \phi\mapsto dz\, \Lk_{\mu,\gamma} \phi=  \pi_*(w^{\gamma}\phi\mu). 
\end{equation}
If $\psi$ is in $\Ok_X$ and $\psi=i^*\Psi$,
where
$
\Psi=\sum_\gamma \psi_\gamma(z)w^\gamma,
$
then 
\begin{equation}\label{poker}
\Lk(\psi\phi)=\sum_\gamma \psi_\gamma\Lk_{\mu,\gamma}\phi.
\end{equation}
The $\Ok_X$-sheaf $\Homs_{\Ok_X}(\Ok_X, \CH^Z_\U)$ is coherent and thus locally 
generated by a finite number of elements, say,  $\mu_1,\ldots,\mu_\nu$.
We can assume that $M$ is chosen such that $w^{M_j+\1}\mu_k=0$ for each $j$ and $k$.
If $\mu=\psi^1\mu_1+\cdots +\psi^\nu \mu_\nu$ and $dz \Lk_k\phi=\pi_*(\phi\mu_k)$, then
\begin{equation}\label{poker2}
\Lk\phi=\Lk_1(\psi^1\phi)+\cdots + \Lk_\nu(\psi^\nu\phi).
\end{equation}
In view of \eqref{poker} and \eqref{poker2} we see that the $\Ok_Z$-module $\N_{X,\pi}$ is locally generated by
the finite set $\Lk_{\mu_k,\gamma}$, $k=1,\ldots,\nu$, $\gamma\le M$.
 %

It follows from Section~\ref{loko}, cf.~\eqref{pelargon2}, that 
$\Lk_ {\mu_k,\gamma}\phi=0$ for all $\gamma$ implies that $\phi \mu_k=0$.
By the duality principle \eqref{dual}  therefore 
$\phi=0$  if and only if $\Lk\phi=0$ for all $\Lk$ in $\N_{X,\pi}$.

Given our coordinate system $(z,w)$ in $\V$,  each $\Lk$ in 
$\N_{X,\pi}$  is uniquely expressed as
$$
\Lk=\sum_{m\le M} c_m(z) \big(\frac{\partial}{\partial w}\big)^m.
$$
It can thus be identified by the element $(c_m)$ in $\Ok^{C_M}_Z$,
where $C_M:=(M_1+1)\cdots (M_p+1)$ is the
number of multiindices $m$ such that $m\le M$. 
Moreover, this identification respects the action of  $\Ok_Z$.
Thus $\N_{X,\pi}$ is, via this identification,  a finitely generated submodule of
$\Ok^{C_M}_Z$ and therefore it is coherent.   Thus part (i) of Theorem~\ref{thmA} is proved.
%
%


\subsection{Independent submersions}\label{insub}
We will now consider a generalization of the example in Section~\ref{basex}.  
Let us assume that $\U\subset \C_z^n\times\C^p_w$ and consider the space
$X'$ with structure sheaf $\Ok_\Omega/\I$, where 
$$
\I=\langle w^{M+\1}\rangle:=\langle w_1^{M_1+1},\ldots, w_{p}^{M_{p}+1}\rangle.
$$
Notice that any local submersion $\pi$ of $\U$ onto 
$Z$ is biholomorphic to a trivial submersion $(\zeta,\eta)\mapsto \zeta$ via the change of coordinates
\begin{equation}\label{foli1}
w_k=\eta_k,\  k=1,\ldots,p, \quad  z_j=\zeta_j+\sum_{i=1}^p b_{ji}\eta_k, \  j=1,\ldots,n,
\end{equation}
where $b_{jk}$\  are holomorphic functions.
We have 
\begin{equation}\label{foli2}
\frac{\partial}{\partial\eta_k}=\frac{\partial}{\partial w_k}+\sum_{j=1}^n (b_{jk}+\Ok(w))\frac{\partial}{\partial z_j}.
\end{equation}

Let $C_m$ be the number of multiindices $\alpha=(\alpha_1,\ldots,\alpha_n)$
such that $|\alpha|\le m$.  Fix a point $0\in Z$.
Let $L$ be a set of $C_{max M_i}$ 
$n$-tuples $b^l_{\cdot }$ of holomorphic functions such that $b^l_{\cdot}(0,0)$ are generic points
in $C^n$. 
For $\ell=(\ell_1, \cdots, \ell_p)\in L^p$ consider the associated change of coordinates 
(and submersion $\pi^\ell$)  determined by 
$b^\ell_{jk}=b^{\ell_k}_j$.  Let us denote the associated derivatives $\partial/\partial \eta_k$ 
by $\partial/\partial\eta^{\ell}_k$. 
For $\gamma\le M$ we have 
the Noetherian operators
$$
\big(\frac{\partial}{\partial\eta^\ell}\big)^\gamma\psi:=
\big(\frac{\partial}{\partial\eta_p^{\ell}}\big)^{\gamma_p}\cdots \big(\frac{\partial}{\partial\eta_1^{\ell}}\big)^{\gamma_1}\psi.
$$
on $X'$. 
Here is the main result of this section.

\begin{prop}\label{fan1}
Assume that $m\le M$ and $|\beta|\le |M-m|$. 
Then there are 
holomorphic $c_{\beta,m,\ell,\gamma}$ for
$\gamma\le M$ in a \nbh of $(0,0)$ 
such that, for any $\psi$ in  $\Ok_\U/\I$,
\begin{equation}\label{spjut1}
\frac{\partial}{\partial z^\beta \partial w^m}\psi=
\sum_{\ell\in L^p} \sum_{\gamma\le M} c_{\beta,m,\ell,\gamma}\big(\frac{\partial}{\partial\eta^\ell}\big)^\gamma\psi.
\end{equation}
\end{prop}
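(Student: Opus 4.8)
The plan is to prove \eqref{spjut1} by a double induction, recovering each pure $z$-derivative from the mixed $\eta^\ell$-derivatives via Vandermonde-type inversions, exactly mirroring the baby case in Section~\ref{basex} but now variable-by-variable. The point is that \eqref{foli2} expresses each $\partial/\partial\eta^\ell_k$ as $\partial/\partial w_k$ plus a combination of the $\partial/\partial z_j$ with leading coefficients $b^{\ell_k}_j(0)$ (plus terms of order $\Ok(w)$). So a product $(\partial/\partial\eta^\ell)^\gamma$ equals $(\partial/\partial w)^\gamma$ plus lower-order-in-$w$ terms that involve extra $z$-derivatives. The genericity of the points $b^l_\cdot(0)\in\C^n$ is precisely what makes the relevant coefficient matrices invertible near $0$.

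First I would fix $m\le M$ and argue by downward induction on $|m|$ (or equivalently induction on $|\beta|\le|M-m|$). For the top stratum, where $|m|=|M|$ so $\beta=0$, one has $(\partial/\partial\eta^\ell)^m\psi=(\partial/\partial w)^m\psi$ on $\Ok_\U/\I$, since every correction term carries a factor killing $\psi$ modulo $\I$; this gives the base case with $c$'s supported on $\gamma=m$. For the inductive step I would expand $(\partial/\partial\eta^\ell)^\gamma\psi$ using \eqref{foli2}, collect the contribution of a fixed target $\partial/\partial z^\beta\partial w^m$, and read off a linear system whose unknowns are the coefficients $c_{\beta,m,\ell,\gamma}$; all strictly lower-order terms (smaller $|m|$, or the same $m$ but the $\Ok(w)$ corrections, which raise the effective $w$-order and hence are handled by the induction hypothesis since $|\beta|$ drops) are moved to the right and absorbed. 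The key is that the coefficient of $\partial/\partial z^\beta$ coming from the leading part of \eqref{foli2} is a generalized multivariate Vandermonde expression in the points $b^\ell_\cdot(0)$, so choosing $|L|=C_{\max M_i}$ generic points makes the system solvable with holomorphic $c$'s in a neighborhood of $(0,0)$.

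The main obstacle I anticipate is the multivariate Vandermonde step: unlike Section~\ref{basex}, here each $\eta^\ell_k$-derivative mixes all $n$ of the $z$-directions simultaneously, so invertibility is a statement about a tensor/confluent Vandermonde matrix indexed by multiindices $\alpha$ with $|\alpha|\le\max M_i$, not the classical one-variable Vandermonde. I would handle this by noting that evaluating homogeneous polynomials of degree $\le\max M_i$ in $n$ variables at $C_{\max M_i}$ points in generic position yields an invertible evaluation matrix — this is a standard fact (a nonempty Zariski-open condition on the point configuration), which is exactly why the number of submersions is taken to be $C_{\max M_i}$. Once invertibility at $(0,0)$ is secured, the coefficients $c_{\beta,m,\ell,\gamma}$ are obtained by Cramer's rule and are holomorphic in a neighborhood because the determinant is nonvanishing there, completing the induction and hence the proof.
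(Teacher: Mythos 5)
Your central idea --- that the leading coefficients form a multivariate Vandermonde-type evaluation matrix, invertible for $C_{\max M_i}$ generic points because a polynomial of degree $\le\max M_i$ in $n$ variables vanishing at that many generic points vanishes identically --- is exactly the key lemma of the paper (Lemma~\ref{krake}). But the inductive scaffolding you wrap around it has a genuine flaw, starting with the base case, which is false: for $m=M$, $\beta=0$ one does \emph{not} have $(\partial/\partial\eta^\ell)^M\psi=(\partial/\partial w)^M\psi$ in $\Ok_\U/\I$. Already for $p=n=1$, $M=1$, $\I=\langle w^2\rangle$ and $\psi=z$ one gets $(\partial/\partial\eta)\psi=b\neq 0=(\partial/\partial w)\psi$ on $\{w=0\}$: a correction term in \eqref{foli2} replaces a $w$-derivative by a $z$-derivative, which only \emph{lowers} the $w$-order of the operator, and nothing in $\I=\langle w^{M+\1}\rangle$ annihilates such a term.

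The deeper problem is that the correction terms produced by the constant leading part $b^\ell(0)$ of \eqref{foli2} are precisely the other members of the target family $\partial/\partial z^\beta\partial w^m$, with smaller $|m|$ and larger $|\beta|$ but the same total order. They are not ``strictly lower order'' in either of your proposed filtrations, so neither a downward induction on $|m|$ nor an upward induction on $|\beta|$ ever has them available from the induction hypothesis; the scheme is circular. These unknowns have to be solved for \emph{simultaneously}, as a single linear system for the whole tuple $\{\partial\psi/\partial z^\beta\partial w^m\}$, and that is where the Vandermonde inversion actually enters (in the paper: first for $p=1$ with constant $b$'s in Lemma~\ref{krake}, then by induction over $p$). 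The only place an induction is genuinely needed --- and the paper runs it over $|M|$, not over $|m|$ or $|\beta|$ --- is to absorb the terms where a derivative falls on a non-constant $b^\ell$ or on an $\Ok(w)$ coefficient; those do strictly reduce the total number of derivatives reaching $\psi$ and hence are covered by the hypothesis for smaller $|M|$. With the system solved all at once and the induction re-indexed in this way your argument goes through, but as written it does not.
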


We first consider the case when $p=1$ and the tuples $b^\ell_\cdot$ are constant.
Then we just have one variable $\eta=w$. 

\begin{lma}\label{krake}
If we choose $C_m$ generic constant $n$-tuples $b^\ell$, 
then for each $\alpha$ with $|\alpha|\le m$ there are unique $d_{\ell,\alpha}$ such that
$$
\frac{\partial^m}{\partial^\alpha z\partial^{m-|\alpha|} w}\psi=\sum_\ell d_{\ell,\alpha}\big(
\frac{\partial}{\partial\eta^\ell}\big)^m\psi.
$$
\end{lma}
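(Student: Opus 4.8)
The plan is to reduce the statement to the invertibility of a multivariate Vandermonde matrix and to treat everything as linear algebra among constant-coefficient differential operators.

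First I would observe that, since $p=1$ and the tuples $b^\ell=(b^\ell_1,\dots,b^\ell_n)$ are constant, the field $\partial/\partial\eta^\ell$ of \eqref{foli2} has no $\Ok(w)$ term and equals $\partial/\partial w+\sum_{j=1}^n b^\ell_j\,\partial/\partial z_j$. As $\partial/\partial w,\partial/\partial z_1,\dots,\partial/\partial z_n$ have constant coefficients they commute, so I may expand the $m$-th power by the binomial and multinomial theorems. Writing $(b^\ell)^\alpha=(b^\ell_1)^{\alpha_1}\cdots(b^\ell_n)^{\alpha_n}$, a direct computation gives
\[
\Big(\frac{\partial}{\partial\eta^\ell}\Big)^m=\sum_{|\alpha|\le m} e_\alpha\,(b^\ell)^\alpha\,\frac{\partial^m}{\partial^\alpha z\,\partial^{m-|\alpha|}w},\qquad e_\alpha=\frac{m!}{(m-|\alpha|)!\,\alpha!}\neq0.
\]
Thus the passage from the $C_m$ operators $(\partial/\partial\eta^\ell)^m$, $\ell\in L$, to the $C_m$ operators $\partial^m/(\partial^\alpha z\,\partial^{m-|\alpha|}w)$, $|\alpha|\le m$, is governed by the square matrix $A_{\ell,\alpha}=e_\alpha(b^\ell)^\alpha$, which factors as $A_{\ell,\alpha}=V_{\ell,\alpha}\,e_\alpha$ with $V_{\ell,\alpha}=(b^\ell)^\alpha$.

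Next I would invert this system. The diagonal factor $\mathrm{diag}(e_\alpha)$ is always invertible, so $A$ is invertible exactly when $V$ is, and $V$ is the evaluation matrix of the monomial basis $\{x^\alpha:|\alpha|\le m\}$ of the space of polynomials of degree $\le m$ in $n$ variables (a space of dimension $C_m$) at the $C_m$ points $b^\ell$. Hence $\det V$ is a polynomial in the coordinates of $b^1,\dots,b^{C_m}$, and it vanishes precisely when the nodes fail to be unisolvent for degree-$\le m$ interpolation. Granting $\det V\not\equiv0$, it is nonzero on a Zariski-dense open set, so for generic $b^\ell$ the matrix $A$ is invertible and one may solve uniquely
\[
\frac{\partial^m}{\partial^\alpha z\,\partial^{m-|\alpha|}w}=\sum_{\ell\in L} d_{\ell,\alpha}\Big(\frac{\partial}{\partial\eta^\ell}\Big)^m,\qquad d_{\ell,\alpha}=(A^{-1})_{\alpha,\ell}.
\]
These are identities of constant-coefficient operators on $\Ok_\U$, so they descend to $\Ok_\U/\I$, which is exactly the assertion of the lemma, the uniqueness of the $d_{\ell,\alpha}$ being immediate from the invertibility of $A$.

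The step I expect to be the main obstacle is precisely the non-vanishing $\det V\not\equiv0$. In one variable this is the classical Vandermonde determinant and is explicit, but for $n\ge2$ there is no product formula and distinctness of the nodes alone does not suffice. It is enough to exhibit a single unisolvent configuration, which shows that the polynomial $\det V$ is not identically zero. One may, for instance, take the principal lattice $\{\alpha/m:\alpha\in\Na^n,\ |\alpha|\le m\}$, which is classically unisolvent for total degree $\le m$; alternatively one argues by induction, splitting the nodes by a hyperplane and invoking the degree-$(m-1)$ case in $n$ variables together with the degree-$\le m$ case in $n-1$ variables on the hyperplane. Once $\det V\not\equiv0$ is in hand, genericity yields invertibility and the inversion above completes the proof.
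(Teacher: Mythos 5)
Your proposal is correct and follows essentially the same route as the paper: expand $(\partial/\partial\eta^\ell)^m$ by the multinomial theorem and reduce the claim to the invertibility of the generalized Vandermonde matrix $\big((b^\ell)^\alpha\big)$ at generic nodes. The only (minor) divergence is in justifying that this determinant is not identically zero: the paper argues directly that a degree-$\le m$ polynomial vanishing at $C_m$ generic points must vanish identically, while you exhibit a unisolvent configuration (e.g.\ the principal lattice) and invoke genericity --- both are valid, and yours is if anything the more careful treatment of the case $n\ge 2$.
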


\begin{proof}
In view of \eqref{foli2} we have 
$$
\big(\frac{\partial}{\partial\eta^\ell}\big)^m\psi=\big(\frac{\partial}{\partial w}+\sum_j b_j^\ell\frac{\partial}
{\partial z_j}\big)^m\psi=
\sum_{|\alpha|\le m} (b^\ell)^\alpha {{m}\choose{\alpha}} \frac{\partial^m}{\partial^\alpha z\partial^{m-|\alpha|} w}\psi,
$$
where
$$
(b^\ell)^\alpha =(b_1^{\ell})^{\alpha_1}\cdots(b_n^{\ell})^{\alpha_n}.
$$
We claim that the $C_m\times C_m$-matrix $B=(b^\ell)^\alpha$ is invertible if the $b^\ell$ are
generic.  If $n=1$ then  $B$ is the Vandermonde matrix from Section~\ref{basex}, and so the claim follows.
In the general case
one can argue as follows:   For given $x_\alpha\in\C^{C_m}$, consider the polynomial
$$
p(t)=\sum_{|\alpha|\le m}  x_\alpha t^\alpha
$$ 
in $\C^n_t$. We get the action of the matrix $B$  on $x_\alpha$ by evaluating $p(t)$ 
at the various points $b^\ell$.  Now  $B(x_\alpha)=0$ means that $p(t)$ 
vanishes at these  $C_m$ generic points, and hence $p(t)$ must vanish identically. This
means that $(x_\alpha)=0$ and since $(x_\alpha)$ is arbitrary, $B$ is invertible. 
Now the lemma follows by taking 
$$
x_\alpha={{m}\choose{\alpha}} \frac{\partial^m}{\partial^\alpha z\partial^{m-|\alpha|} w}\psi.
$$
\end{proof}

\begin{proof}[Proof of Proposition~\ref{fan1}]
Let us first assume that all $b^l_\cdot$ are constant. Then 
$\partial/\partial\eta^\ell_k$ only depends on $\ell_k\in L$, cf.~\eqref{foli2},  so 
we can denote it by $\partial/\partial\eta^{\ell_k}_k$.  Thus  
$$
\big(\frac{\partial}{\partial\eta^\ell}\big)^\gamma\psi=
\big(\frac{\partial}{\partial\eta_p^{\ell_p}}\big)^{\gamma_p}\cdots \big(\frac{\partial}{\partial\eta_1^{\ell_1}}\big)^{\gamma_1}\psi.
$$
Notice  that we can write the left hand side of \eqref{spjut1} 
(in a non-unique way) as 
\begin{equation}\label{spjut2}
\frac{\partial}{\partial z^{\beta_p}\partial w_p^{m_p}}\cdots
\frac{\partial}{\partial z^{\beta_2}\partial w_2^{m_2}}
\frac{\partial}{\partial z^{\beta_1}\partial w_1^{m_1}}\psi,
 %
\end{equation}
where $\beta_j\le M_j-m_j$. 
It follows from Lemma~\ref{krake}  that the proposition holds if $p=1$.
We now proceed by induction over $p$.
Assume that \begin{equation}\label{spjut3}
\omega:=\frac{\partial}{\partial z^{\beta_{p-1}}\partial w_{p-1}^{m_{p-1}}}\cdots
\frac{\partial}{\partial z^{\beta_2}\partial w_2^{m_2}}
\frac{\partial}{\partial z^{\beta_1}\partial w_1^{m_1}}\psi
\end{equation}
is a linear  combination of 
\begin{equation}\label{orkad}
\big(\frac{\partial}{\partial\eta_{p-1}^{\ell_{p-1}}}\big)^{\gamma_{p-1}}\cdots\big(\frac{\partial}{\partial\eta_1^{\ell_1}}\big)^{\gamma_1}\psi
\end{equation}
for $\gamma_i\le M_i$ and various $\ell_i$. 
By Lemma~\ref{krake} we have that
$$
\frac{\partial}{\partial z^{\beta_p}\partial w_p^{m_p}}\omega
$$
is a linear combination of 
$$
\big(\frac{\partial}{\partial\eta_{p}^{\ell_p}}\big)^{\gamma_{p}}\omega
$$
for $\gamma_p\le M_p$ and $(\ell_{p-1}, \ldots,\ell_1)\in L^{p-1}$. 

\smallskip

For the general case we will proceed by induction over $M$. 
First assume that $M=0$. Then \eqref{spjut1} just states that $\psi=\psi$.  
Let us  assume that we have proved the proposition for each $M'\le M$ such that
$|M'|<|M|$.  
Let $y$ be the tuple of all 
$(\partial/\partial\eta^\ell)^\gamma\psi$ for $\gamma\le M$, $\ell\in L^p$.
Let $x$ be the tuple of all 
\begin{equation}\label{ping}
\frac{\partial}{\partial z^\beta \partial w^m}\psi
\end{equation}
for $m\le M$, $|\beta|\le |M-m|$.
Then
$$
y= Bx + Ex'.
$$
where $Bx$ is the terms obtained when expanding $(\partial/\partial\eta^\ell)^\gamma\psi$ by \eqref{foli2}
and no derivative. falls on any $b^\ell$ or any $\Ok(w)$.
Each entry  in $x'$ must then be of the form  \eqref{ping} where 
$m<M$ and $|\beta|<|M-m|$. It follows from the induction hypothesis
that $x'$ is in the module spanned by $y$.
Therefore $Ex'=Dy$ for some holomorphic matrix $D$ in a \nbh of $(0,0)$
and hence
$
Bx=Dy-y.
$
Notice that $B(0,0)$ is injective in view of the constant case of the proposition. Hence it is
injective and has a holomorphic left inverse $A$  in a \nbh of $(0,0)$ and so 
$
x=A(Dy-y).
$
\end{proof}




\subsection{Proof of Theorem~\ref{thmA}~(ii)}
Let us assume that $\pi$ is defined by local coordinates $(z,w)$ and for $\ell\in L^p$ let
$(\zeta^\ell,\eta)$ be the local coordinates in Proposition~\ref{fan1} corresponding to 
$\partial/\partial\eta^\ell$ and  let $\pi^\ell$ be the induced local holomorphic submersion. 
 Notice that 
$
dw\w dz=B_\ell d\eta\w d\zeta^\ell,
$
where $B_\ell=1+\Ok(\eta)$, hence invertible in $\Ok_{X'}$, so that
\begin{equation}
d\eta\w d\zeta^\ell=A_\ell dw\w dz,
\end{equation}
where $A_\ell$ is the inverse of $B_\ell$. 
Notice that, cf.~the argument for \eqref{skvader} above and \eqref{grobian},
\begin{multline}\label{fan3}
dz\big(\frac{\partial}{\partial\eta^\ell}\big)^m\psi=d\zeta^\ell\big(\frac{\partial}{\partial\eta^\ell}\big)^m\psi
=c_m \pi^\ell_*(\psi \eta^{M-m}\hat\mu \w d\zeta^\ell)=\\
c_m\pi^\ell_*(\psi \eta^{M-m}A_\ell\hat\mu \w d\zeta)=
\sum_{\gamma\le m}A_{\ell,\gamma}(z) \pi^\ell_*(\psi \eta^{M-\gamma}\hat\mu\w d\zeta).
\end{multline}

Given $\mu$ in $\Homs(\Ok_X,\CH^Z_\U)$ there is a holomorphic $a$, cf.~\eqref{postis}, such that 
 $\mu=a\hat\mu\w d\zeta$.  
 From \eqref{skvader} we have
\begin{equation}\label{skvader2}
 \pi_*\Big(\phi a \hat\mu \w dz\Big)=dz
\frac{1}{M!}\big(\frac{\partial}{\partial w}\big)^M (a\phi).
\end{equation}
From Proposition~\ref{fan1} we have that
\begin{equation}\label{skvader3}
\frac{1}{M!}\big(\frac{\partial}{\partial w}\big)^M (a\phi)=
\sum_{\ell\in L^p}\sum_{m\le M} c_{\ell,m}\big(\frac{\partial}{\partial \eta^\ell}\big)(a\phi).
\end{equation}
Combining \eqref{fan3} with $\psi=a\phi$, \eqref{skvader2} and \eqref{skvader3} we get 
\begin{equation}\label{fan4}
\pi_*(\phi\mu)=\sum_{\ell\in L^p}\sum_{m\le M} c'_{m,\ell}(z)\pi^\ell_*(\phi \eta^{M-m}\mu).
\end{equation}
In view of the last part of the proof of Theorem~\ref{thmA}~(i) we thus have:

\begin{lma} \label{ponton}
Let $\mu_k$ be a finite set of generators for  $\mu\in\Homs(\Ok_X,\CH^Z_\U)$.
If $\Lk$ is defined by  $dz\, \Lk\phi=\pi_*(\phi \mu)$,  then $\Lk$ belongs to the $\Ok_Z$-module
generated by $\Lk_{\mu_k,\gamma,\ell}$, where
$$
dz\, \Lk_{\mu_k,\gamma,\ell}\phi=\pi^\ell_*(w^\gamma \phi \mu_k).
$$
\end{lma}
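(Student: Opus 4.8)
The plan is to read the lemma off directly from the identity \eqref{fan4} combined with the generation statement already established in the proof of Theorem~\ref{thmA}~(i). Recall that \eqref{fan4} exhibits
$$
\pi_*(\phi\mu)=\sum_{\ell\in L^p}\sum_{m\le M} c'_{m,\ell}(z)\,\pi^\ell_*(\phi\,\eta^{M-m}\mu)
$$
with holomorphic coefficients $c'_{m,\ell}\in\Ok_Z$. After cancelling the common factor $dz$, this presents $\Lk$ as an $\Ok_Z$-linear combination of the operators $\phi\mapsto \pi^\ell_*(\phi\,\eta^{M-m}\mu)/dz$. Since the coefficients already lie in $\Ok_Z$, it suffices to show that each of these operators lies in the $\Ok_Z$-module generated by the $\Lk_{\mu_k,\gamma,\ell}$.

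Fixing $\ell$ and $m$, I would first observe that $\eta^{M-m}\mu=w^{M-m}\mu$ (as $\eta=w$ under \eqref{foli1}) is again an element of $\Homs(\Ok_X,\CH^Z_\U)$: multiplication by the holomorphic function $w^{M-m}$ preserves both the $\CH$ property and the annihilation by $\J$. Writing the given generators as $\mu=\sum_k\psi^k\mu_k$ with $\psi^k\in\Ok_X$, we get $w^{M-m}\mu=\sum_k(w^{M-m}\psi^k)\mu_k$, so $w^{M-m}\mu$ is a particular member of the Hom-sheaf expressed through the generators $\mu_k$.

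Next I would apply the closing argument of the proof of part (i) — precisely the decompositions \eqref{poker} and \eqref{poker2} — but with $\pi$ replaced by $\pi^\ell$. This is legitimate because that argument used nothing about the submersion beyond the projection formula: for $\xi\in\Ok_Z$, viewed via $\pi^{\ell*}$ as a function constant along the fibres of $\pi^\ell$, one has $\pi^\ell_*(\pi^{\ell*}\xi\cdot\phi\,\mu_k)=\xi\,\pi^\ell_*(\phi\,\mu_k)$. The resulting conclusion is that $\phi\mapsto \pi^\ell_*(\phi\,w^{M-m}\mu)/dz$ lies in the $\Ok_Z$-module generated by the operators $\phi\mapsto\pi^\ell_*(w^\gamma\phi\,\mu_k)/dz=\Lk_{\mu_k,\gamma,\ell}\phi$, $\gamma\le M$. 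Feeding this back into the $\Ok_Z$-combination coming from \eqref{fan4} then yields the lemma.

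The one genuine subtlety, and the step I expect to need the most care, is the bookkeeping in this last transfer: the monomial expansion $\Psi=\sum_\gamma\psi_\gamma w^\gamma$ underlying \eqref{poker} must be carried out in the coordinates $(\zeta^\ell,\eta)$ adapted to $\pi^\ell$, so that the scalars pulled out of $\pi^\ell_*$ are functions of $\zeta^\ell$ — hence, restricted to $Z$, honest elements of $\Ok_Z$ — rather than functions of the original $z$, which are not constant along the $\pi^\ell$-fibres. Matching the two normalizations is harmless, since on $Z$ we have $\eta=w=0$, whence $z=\zeta^\ell$ and $dz=d\zeta^\ell$ as $n$-forms on $Z$, exactly the identification already invoked in \eqref{fan3}. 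Everything else is routine linear algebra over $\Ok_Z$.
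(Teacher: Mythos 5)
Your argument is correct and is essentially the paper's own proof: the paper derives the lemma by combining \eqref{fan4} with the generation argument (\eqref{poker}--\eqref{poker2}) from the end of the proof of Theorem~\ref{thmA}~(i), applied to each submersion $\pi^\ell$. You merely spell out the details the paper leaves implicit, including the (correct) observation that the monomial expansion feeding into \eqref{poker} must be taken in the coordinates $(\zeta^\ell,\eta)$ adapted to $\pi^\ell$.
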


We conclude that, given an embedding, $\N_X$ defined so far, is 
independent of coordinates. 
It remains to see the independence of the embedding. 
Let us choose coordinates $(z,w)$ in our original
embedding so that
our embedding factorizes over a minimal embedding as in Section~\ref{kork} and write
$w=(w',w'')$. Then each 
$\mu$ in $\Homs(\Ok_X,\CH^Z)$ has the form
$\hat\mu\otimes[w''=0]$. Thus it can be written
$$
\mu=a(\zeta,w')\frac{1}{(2\pi i)^p}\dbar\frac{dw'}{(w')^{M+\1}}\w\dbar\frac{dw''}{w''}\w dz.
$$
It follows that if the submersion $\pi$ is represented by \eqref{foli1},
then the fiber over $z=\zeta$ is parametrized by $\eta\mapsto (z+b'\eta'+b''\eta''; \eta',\eta'')$
so that 
\begin{multline*}
\pi_*(\phi\mu)(z)= dz \int_\eta a(z,\eta')\phi(z+b'\eta'+b''\eta''; \eta',\eta'')\frac{1}{(2\pi i)^p}\dbar\frac{dw'}{(w')^{M+\1}}\w\dbar\frac{dw''}{w''}\w dz=
\\
dz \int_{\eta'}a(z,\eta')\phi(z+b'\eta'; \eta',0)\frac{1}{(2\pi i)^{p'}}\dbar\frac{dw'}{(w')^{M+\1}}
\end{multline*}
and so we get the same result with $b'_{jk}(\zeta,\eta',0)$.
Thus $\Lk$, defined by $dz\Lk\phi=\pi_*(\phi\mu)$, occurs already from a minimal embedding
and so the proof of Theorem~\ref{thmA}~(ii) is complete. 

%

\subsection{Generators for $\N_X$ and the norm $|\cdot|_X$}

From Lemma~\ref{ponton} we get:

\begin{prop}\label{pollinera}
If $\mu_1,\ldots,\mu_\nu$ generate  $\Homs(\Ok_X,\CH_\U^Z)$ in $\U$,
$\eta_1,\ldots,\eta_p$ generate $\J_Z$ in $\U$, and $\pi^\ell$ is a suitable finite set of
submersions, then 
the operators $\phi\mapsto \Lk_{\mu_k,\gamma,\ell}$,  defined as in Lemma~\ref{ponton},
generate $\N_X$ in $\U$.
\end{prop}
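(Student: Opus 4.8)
The plan is to obtain the equality of the two modules by assembling pieces that are already in place, so the argument is essentially bookkeeping on top of Theorem~\ref{thmA} and Lemma~\ref{ponton}. First I would fix, once and for all, a single multiindex $M$ large enough that $w^{M_j+1}\mu_k=0$ for every $j$ and every generator $\mu_k$; this is possible by the Nullstellensatz exactly as in the proof of Theorem~\ref{thmA}~(i), and it guarantees that the indexing set $k=1,\ldots,\nu$, $\gamma\le M$, $\ell\in L^p$ is finite. With this in place I would establish the two inclusions separately.

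For $\langle\Lk_{\mu_k,\gamma,\ell}\rangle_{\Ok_Z}\subset\N_X$ I note that, by its defining relation $dz\,\Lk_{\mu_k,\gamma,\ell}\phi=\pi^\ell_*(w^\gamma\phi\mu_k)=\pi^\ell_*\big(\phi(w^\gamma\mu_k)\big)$, each generator is precisely the Noetherian operator attached by Theorem~\ref{thmA}~(i) to the submersion $\pi^\ell$ and the current $w^\gamma\mu_k$. Since $\J\mu_k=0$ forces $\J(w^\gamma\mu_k)=0$, the current $w^\gamma\mu_k$ again belongs to $\Homs_{\Ok_X}(\Ok_X,\CH^Z_\U)$, so $\Lk_{\mu_k,\gamma,\ell}\in\N_{X,\pi^\ell}\subset\N_X$ by Theorem~\ref{thmA}~(ii). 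For the reverse inclusion I would use that, by the definition of $\N_X$ in Theorem~\ref{thmA}~(ii), every element of $\N_X$ is an $\Ok_Z$-linear combination of operators $\Lk$ of the form $dz\,\Lk\phi=\pi_*(\phi\mu)$ with $\mu\in\Homs_{\Ok_X}(\Ok_X,\CH^Z_\U)$ and $\pi$ a local submersion; Lemma~\ref{ponton} asserts exactly that every such $\Lk$ lies in the $\Ok_Z$-module generated by the $\Lk_{\mu_k,\gamma,\ell}$. Combining the two inclusions yields equality.

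I expect no serious obstacle, since the real content was already extracted in Lemma~\ref{ponton}, namely that the \emph{finite}, generic family $\pi^\ell$ suffices to recover $\N_{X,\pi}$ for \emph{every} local submersion $\pi$. The only points demanding care are the uniform choice of $M$ above and the notational identification that the operators written $\Lk_{\mu_k,\gamma}$ in the proof of Theorem~\ref{thmA}~(i), which generate a single $\N_{X,\pi^\ell}$, are literally the $\Lk_{\mu_k,\gamma,\ell}$ of Lemma~\ref{ponton} for that value of $\ell$. With this identification the reverse inclusion even follows directly, without Lemma~\ref{ponton}: $\N_X$ is generated over $\Ok_Z$ by the finitely many $\N_{X,\pi^\ell}$, each of which is generated by its $\Lk_{\mu_k,\gamma,\ell}$, so their union generates $\N_X$.
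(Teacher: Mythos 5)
Your proposal is correct and follows essentially the paper's own route: the paper states this proposition as an immediate consequence of Lemma~\ref{ponton} (each $\Lk\in\N_{X,\pi}$ lies in the span of the $\Lk_{\mu_k,\gamma,\ell}$), and the forward inclusion you spell out — that $w^\gamma\mu_k$ is again in $\Homs_{\Ok_X}(\Ok_X,\CH^Z_\U)$, so $\Lk_{\mu_k,\gamma,\ell}\in\N_{X,\pi^\ell}\subset\N_X$ — together with the uniform choice of $M$, is exactly the bookkeeping the paper leaves implicit from the proof of Theorem~\ref{thmA}. The only detail worth recording is the identification you already note, namely that the $\Lk_{\mu_k,\gamma}$ generating $\N_{X,\pi^\ell}$ in the proof of part~(i) coincide with the $\Lk_{\mu_k,\gamma,\ell}$ of Lemma~\ref{ponton} because $\eta=w$ under the coordinate changes \eqref{foli1}.
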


By the Nullstellensatz this is a finite set since $\eta^\gamma\mu_k=0$ if $\gamma$ is large.
Here $dz$ stands for any holomorphic $n$-form on $Z$.
We will now describe another set of generators in terms of the $\mu_k$
and coordinates $(z,w)$ in $\U$.

\begin{prop}\label{stud}
Assume that  $\I=\langle w^{M+\1}\rangle$ is contained in $\J$. 
Let $a_k$ be holomorphic functions such that  
$\mu_k=a_k\hat\mu\w dz,  \quad  k=1,\ldots,\nu,$
cf.~\eqref{grobian}, in $\U$.  Then 
the  operators 
$$
\phi\mapsto \Lk_{m,\beta,k}\phi:=\frac{\partial(\phi a_k)}{\partial z^\beta\partial w^{m}}(\cdot,0), 
\quad m\le M, \ |\beta|\le |M-m|,
$$
are independent of the choice of $a_k$ and generate
the $\Ok_Z$-module $\N_X$  in $\U$.
\end{prop}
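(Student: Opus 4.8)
The plan is to prove the two assertions separately: first that each $\Lk_{m,\beta,k}$ is independent of the choice of $a_k$ (and is a well-defined Noetherian operator $\Ok_X\to\Ok_Z$), and then that the $\Ok_Z$-module they span is exactly $\N_X$. For the independence, suppose $a_k$ and $a_k'$ both satisfy $\mu_k=a_k\hat\mu\w dz=a_k'\hat\mu\w dz$. Since $dz$ is a nonvanishing $n$-form this forces $(a_k-a_k')\hat\mu=0$, so $a_k-a_k'$ lies in the annihilator of $\hat\mu$, which by Example~\ref{apa1} is precisely $\I=\langle w^{M+\1}\rangle$; write $a_k-a_k'=\sum_j w_j^{M_j+1}h_j$. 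For $m\le M$, the factor $w_j^{M_j+1}$ is hit by $\partial/\partial w^m$ at most $m_j\le M_j$ times, so it survives to positive order in $w_j$, while $\partial/\partial z^\beta$ commutes past it; hence $\partial((a_k-a_k')\Phi)/\partial z^\beta\partial w^m$ vanishes at $w=0$. The very same computation, applied to $\Phi\in\J$ (for which $\Phi a_k\in\ann(\hat\mu)=\I$ because $\J\mu_k=0$), shows that $\Lk_{m,\beta,k}$ descends to an operator on $\Ok_X$, i.e.\ is Noetherian.

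For the generation I would establish the two inclusions. By Proposition~\ref{pollinera} (equivalently Lemma~\ref{ponton}) the module $\N_X$ is generated by the operators $\phi\mapsto\pi^\ell_*(w^\gamma\phi\mu_k)/dz$. To place these in the span of the $\Lk_{m,\beta,k}$, I would rewrite $\pi^\ell_*(w^\gamma\phi\mu_k)$ through the submersion identity \eqref{fan3} as an $\Ok_Z$-combination of the pure submersion derivatives $(\partial/\partial\eta^\ell)^{\gamma'}(a_k\phi)|_{w=0}$, and then expand each $\partial/\partial\eta^\ell_j=\partial/\partial w_j+\sum_i(b_{ij}+\Ok(w))\partial/\partial z_i$ via \eqref{foli2}; this writes the result as an $\Ok_Z$-combination of the mixed derivatives $\partial(a_k\phi)/\partial z^\beta\partial w^m|_{w=0}=\Lk_{m,\beta,k}\phi$, giving $\N_X\subseteq\langle\Lk_{m,\beta,k}\rangle$. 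Conversely, applying Proposition~\ref{fan1} to $\psi=a_k\phi$ expresses each $\partial(a_k\phi)/\partial z^\beta\partial w^m$, after restriction to $w=0$, as an $\Ok_Z$-combination of the submersion derivatives $(\partial/\partial\eta^\ell)^\gamma(a_k\phi)$, which by \eqref{fan3} are $\Ok_Z$-combinations of the pushforwards $\pi^\ell_*(w^{M-\gamma'}\phi\mu_k)$, i.e.\ of generators of $\N_X$; hence each $\Lk_{m,\beta,k}\in\N_X$. Combining the two inclusions gives the claim.

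The routine part is the commutation of $\partial/\partial z^\beta$ past the $w_j^{M_j+1}$ factors together with the constant prefactors coming from \eqref{snok}. The main obstacle is the careful multiindex bookkeeping: one must check that the ranges $m\le M$, $|\beta|\le|M-m|$ produced by expanding \eqref{foli2} and those appearing in Proposition~\ref{fan1} match exactly, and, more delicately, that every coefficient that occurs can be taken in $\Ok_Z$. This last point is where \eqref{fan3} is essential, since it is precisely the mechanism by which the factors $A_\ell=1+\Ok(w)$ relating $d\eta\w d\zeta^\ell$ to $dw\w dz$ get Taylor-expanded in $w$, the positive powers of $w$ being absorbed into lower weights $\eta^{M-\gamma'}$ against $\hat\mu$ and leaving genuinely holomorphic $z$-coefficients $A_{\ell,\gamma'}(z)$.
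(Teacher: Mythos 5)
Your proof is correct and follows essentially the same route as the paper's: independence via the duality principle for the complete intersection $\I=\langle w^{M+\1}\rangle$, the inclusion $\Lk_{m,\beta,k}\in\N_X$ via Proposition~\ref{fan1} applied to $\psi=a_k\phi$ together with \eqref{fan3}, and generation by expressing the generators $\Lk_{\mu_k,\gamma,\ell}$ of Proposition~\ref{pollinera} through \eqref{skvader2}/\eqref{fan3} and the expansion \eqref{foli2}. The only difference is that you spell out the Leibniz computation behind the independence and the descent of $a_k\phi$ to $\Ok_\U/\I$, which the paper leaves implicit.
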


\begin{proof}
If $a_k\hat\mu=a'_k\hat\mu$, then $a_k-a_k'$ is in $\I$ by the duality principle for a complete intersection,
and hence  $\Lk_{m,\beta,k}\phi$ are independent of $a_k$ (and of the choice of representative of the class $\phi$
in $\Ok/\J$).   It follows from  Proposition~\ref{fan1} that $\Lk_{m,\beta,k}\phi$ are $\Ok_Z$-linear
combinations of $(\partial/\partial^\ell)^m(\phi a_k)$, and in view of \eqref{fan3}  
therefore of $\pi_*^\ell(\phi \mu_k)$. Hence $\Lk_{m,\beta,k}$ are in $\N_X$ by definition.
By \eqref{skvader2}, applied to $\pi_*^\ell(\phi\eta^{\gamma}a_k\hat\mu)$, we see that 
each $\Lk_{\mu_k,\gamma,\ell}$ is an $\Ok_Z$-linear combination of the $\Lk_{m,\beta,k}\phi$, and hence
$\Lk_{m,\beta,k}$ generate $\N_X$.
\end{proof}

 
We thus have
\begin{equation}\label{atlas4}
|\phi(z)|_X\sim \sum_{k=1}^\nu \sum_{m\le M}  \sum_{|\beta|\le |M|-|m|}
\big|\frac{\partial(\phi a_k)}{\partial z^\beta\partial w^{m}}(z,0)\big|.
\end{equation}
%
It follows from \eqref{atlas4} that 
\begin{equation}\label{fisk}
|\xi\phi|_X\le C|\phi|_X,
\end{equation}
where $C$ only depends on $\xi\in\Ok_X$. Notice that 
if in addition $\xi$ is invertible in $\Ok_X$,  then  
$
|\phi|_X\sim |\xi\phi|_X
$
since 
$|\phi|_X=|\xi^{-1}\xi\phi|_X\lesssim |\xi\phi|_X$.

\subsection{Proof of Theorem~\ref{thmA}~(iii)}\label{polyp}

\begin{prop} \label{kokos}
Let $i\colon X\to \U$ be a local embedding $i\colon X\to \U$ and assume that $\Ok_X$ is Cohen-Macaulay. 
Let  $\mu$ be a column of Coleff-Herrera currents that generate $\Homs_{\Ok_X}(\Ok_X, \CH^Z_\U)$,
$(z,w)$ are coordinates in $\U$,  $1, \ldots, w^{\alpha_{\nu-1}}$ a basis for $\Ok_X$
over $\Ok_Z$.  If  $\phi$ in $\Ok_X$ is represented by $\hat\phi$ as in \eqref{basic2} and
\begin{equation}\label{kolibri0}
\phi\mu=\sum_\gamma b_\gamma dz\w\dbar\frac{dw}{w^{\gamma+1}},
\end{equation}
then
\begin{equation}\label{brutus}
\sum_j |\hat\phi_j(z)|\sim \sum_\gamma |b_\gamma |.
\end{equation}
\end{prop}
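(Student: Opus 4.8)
The plan is to make the dependence of the coefficients $b_\gamma$ on $\hat\phi$ completely explicit, so that \eqref{brutus} becomes a pointwise linear-algebra statement, and then to reduce the nontrivial inequality to injectivity of one matrix on the fibre of $X$ over the centre point.

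First I would compute the $b_\gamma$. Writing the generators as $\mu_k=a_k\hat\mu\w dz$, $k=1,\dots,r$, cf.~\eqref{grobian}, and expanding $a_k=\sum_\delta a_{k,\delta}(z)w^\delta$, I use the elementary identity $w^\delta\,\dbar\frac{dw}{w^{M+\1}}=\dbar\frac{dw}{w^{M-\delta+\1}}$, which vanishes unless $\delta\le M$. Expanding $\phi a_k=\sum_\delta c_{k,\delta}(z)w^\delta$ then gives
\begin{equation*}
b_{k,\gamma}(z)=\frac{1}{(2\pi i)^p}\,c_{k,M-\gamma}(z),\qquad c_{k,\delta}(z)=\sum_{\alpha_j\le\delta}a_{k,\delta-\alpha_j}(z)\,\hat\phi_j(z).
\end{equation*}
Thus the column $b=(b_{k,\gamma})$ and the column $\hat\phi=(\hat\phi_j)$ are related by $b=\tfrac{1}{(2\pi i)^p}A(z)\hat\phi$, where the entries of the matrix $A(z)$ are among the Taylor coefficients $a_{k,\beta}(z)$ and hence depend holomorphically on $z$. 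Since these entries are locally bounded, one inequality is immediate: $\sum_\gamma|b_\gamma|\le C\sum_j|\hat\phi_j|$.

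For the reverse inequality it suffices to show that $A(x)$ has full column rank $\nu$ at the centre point $x$. Indeed, the smallest singular value of $A(z)$ is continuous, so if it is positive at $x$ it stays bounded below on a \nbh of $x$; on that \nbh the matrix $A(z)$ admits a uniformly bounded left inverse and $\sum_j|\hat\phi_j|\le C'\sum_\gamma|b_\gamma|$ follows. So everything comes down to injectivity of $A(x)$.

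To prove that $A(x)$ is injective, let $\xi=(\xi_j)$ lie in its kernel and set $\xi=\sum_j\xi_j w^{\alpha_j}$, viewed in the Artinian fibre ring $R:=\Ok_{X,x}/\mathfrak m_{Z,x}\Ok_{X,x}$ (whose $\C$-basis is the classes of the $w^{\alpha_j}$, since $\Ok_X$ is Cohen-Macaulay, hence $\Ok_{X,x}$ is $\Ok_{Z,x}$-free). By the displayed formula, $A(x)\xi=0$ says precisely that $\xi$ annihilates the restriction of each $\mu_k$ to the fibre over $x$. Because $\mu_1,\dots,\mu_r$ generate $\Homs_{\Ok_X}(\Ok_X,\CH^Z_\U)_x$ over $\Ok_{X,x}$, their classes generate the fibre $\overline M:=\Homs_{\Ok_X}(\Ok_X,\CH^Z_\U)_x\otimes_{\Ok_{Z,x}}\C$ over $R$; using freeness of $\Ok_{X,x}$ over $\Ok_{Z,x}$, the residue duality specializes so as to identify $\overline M$ with the Matlis dual $R^\vee:=\Hom_\C(R,\C)$, the dualizing module of the Artinian ring $R$. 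Then $\xi$ annihilates every generator, hence all of $R^\vee$; but $R^\vee$ is a faithful $R$-module, for if $\xi R^\vee=0$ then $\lambda(\xi)=(\xi\lambda)(1)=0$ for every $\lambda\in R^\vee$, whence $\xi=0$. Therefore $\ker A(x)=0$ and the equivalence \eqref{brutus} is established.

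I expect the main obstacle to be the fibre step: one must verify that restricting a Coleff--Herrera current to the fibre over $x$ is compatible with the $\Ok_X$-module structure and realizes $\overline M$ as the dualizing module $R^\vee$ of the Artinian fibre. This is exactly where the Cohen--Macaulay hypothesis enters, through freeness of $\Ok_{X,x}$ over $\Ok_{Z,x}$ (equivalently, the commutation of the $\Ok_Z$-dual with $-\otimes_{\Ok_{Z,x}}\C$); once this identification is in place, the faithfulness of $R^\vee$ that finishes the argument is elementary.
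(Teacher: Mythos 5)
Your computation of the matrix $A(z)$ and the resulting inequality $\sum_\gamma|b_\gamma|\lesssim\sum_j|\hat\phi_j|$ is correct and matches the easy half of the paper's argument (the holomorphic matrix $T$). The reduction of the reverse inequality to pointwise injectivity of $A$ at the centre point, via the smallest singular value, is also sound and is essentially what the paper does, except that the paper obtains a holomorphic left inverse $S$ by citing \cite[Lemma~4.11]{AL}. Where you genuinely diverge is in how you propose to prove injectivity of $A(x)$: a purely algebraic argument in the Artinian fibre ring $R=\Ok_{X,x}/\mathfrak m_{Z,x}\Ok_{X,x}$ using faithfulness of the Matlis dual $R^\vee$. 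The faithfulness step itself is correct and elementary, as you say.

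The gap is the step you flag but do not close, and the justification you offer for it is not sufficient. The condition $A(x)\xi=0$ says that $\xi=\sum_j\xi_jw^{\alpha_j}$ annihilates the specialized currents $\mu_k(x)=\sum_\gamma\mu_{k,\gamma}(x)\,\dbar(dw/w^{\gamma+\1})$ inside the space $D$ of all such fibre currents killed by the fibre ideal; $D$ is indeed canonically $R^\vee$ (complete intersection duality in $\C\{w\}/\langle w^{M+\1}\rangle$). But your argument needs the $\mu_k(x)$ to generate $D$ over $R$: if they only generate a proper submodule $D'$, faithfulness fails (e.g.\ $R=\C[\epsilon]/\epsilon^2$ and $D'=\C\cdot 1^*\subset R^\vee$ is killed by $\epsilon$). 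Generation of $D$ is equivalent to surjectivity of the evaluation map $\Homs_{\Ok_X}(\Ok_X,\CH^Z_\U)_x\to D$, i.e.\ to the identification of this module with $\Hom_{\Ok_{Z,x}}(\Ok_{X,x},\Ok_{Z,x})$ via the residue pairing $\mu\mapsto(\phi\mapsto\pi_*(\phi\mu))$. Freeness of $\Ok_{X,x}$ over $\Ok_{Z,x}$ only tells you that \emph{if} this identification holds then it commutes with $-\otimes_{\Ok_{Z,x}}\C$; it does not produce the identification, whose nontrivial half is surjectivity of the pairing (injectivity is easy from \eqref{pelargon2}). That surjectivity is precisely the local duality content of \cite[Lemma~4.11]{AL}, which is what the paper's one-line proof of the hard inequality invokes. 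So your route is viable and arguably more transparent, but as written it silently re-imports the same external duality input rather than proving it.
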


\begin{proof}
Notice first that  
$
\phi\mu=\sum_j \hat\phi_j(z) w^{\alpha_j}\mu.
$
Since
\begin{equation}\label{kolibri}
\mu=\sum_\gamma \mu_\gamma dz\w \dbar\frac{dw}{w^{\gamma+1}},
\end{equation}
cf.\ \eqref{pelargon},
it follows that each $b_\gamma$ is a holomorphic linear combination of the $\hat\phi_j$. That is, 
there  is a holomorphic matrix $T$ that maps the tuple $(\hat\phi_j)$ to the tuple
$(b_\gamma)$. Thus the right hand side of \eqref{brutus} is dominated by the left hand side. 
Since $\Ok_X$ is Cohen-Macaulay, see
\cite[Lemma~4.11]{AL},  there is a holomorphic matrix $S$ that takes $(b_\gamma)$ to $(\phi_j)$, 
and therefore the reverse inequality in \eqref{brutus} holds.  
\end{proof}

It follows from \eqref{pelargon2} that 
$
\Lk_{\mu,\gamma}\phi:=\pi_*(\phi w^\gamma\mu)= (2\pi i)^p b_\gamma.
$
Since $\Lk_{\mu,\gamma}$ are in $\N_X$ it follows that \eqref{brutus} is dominated by $|\phi|_X$. 
In view of Proposition~\ref{pollinera} a suitable finite sum of  expressions \eqref{brutus}, corresponding
to various submersions, will be $\sim |\phi|_X$. 
Thus part (iii) of Theorem~\ref{thmA} is proved. 

\subsection{Norm of smooth $(0,*)$-forms}
Recall from Section~\ref{krokodil} that $\phi\w\mu$ is well-defined for 
$\phi$ in $\E_X^{0,*}$ and $\mu$ in $\Homs(\Ok_X,\CH_\U^Z)$ and that
$\phi\w \mu=0$ for all such $\mu$, by definition, if and only if $\phi=0$.
Let  $(z,w)$ be local coordinates defining the submersion $\pi$. Recall that $\phi$ has
a representative $\hat\phi$ of the form \eqref{basic2} where $\hat\phi_j$ are smooth 
$(0,*)$-forms on $Z$.  
If we express $\mu$ in $\Homs(\Ok_X,\CH_\U^Z)$ on the form \eqref{kolibri} we see that  
$$
\phi\w \mu=\sum_\alpha b_\alpha(z)\w dz \otimes \dbar \frac{dw}{w^{\alpha+\1}},
$$
where $b_\alpha$ are smooth $(0,*)$-forms on $Z$.    
Thus the associated operator  $\Lk$ in $\N_X$, defined by 
\begin{equation}\label{stolle}
dz\, \Lk\phi=\pi_*(\phi\w \mu).
\end{equation}
for holomorphic $\phi$, extends to an operator 
 $\E_X^{0,*}\to\E_Z^{0,*}$. 
 
\smallskip
The natural action of holomorphic differential operator $T$  in $\U$ on smooth functions
in $\U$ extends to an action on smooth $(0,*)$-forms. Formally by 
interpreting the derivatives as Lie derivatives;  in practice it just means that 
if 
$
\Phi=\sum'_{I,J} \Phi_{I,J}\w d\bar z_I\w d\bar w_J
$
in $\U$,  then
$
T\Phi=\sum'_{I,J} T\Phi_{I,J}\w d\bar z_I\w d\bar w_J.
$
If $T$ is Noetherian with respect to $\J$, then its pullback $\T\phi(z)$ to $Z$ 
is independent of the representative $\Phi$ of $\phi$ in $\E_X^{0,*}$.  
In fact,   if $\Phi$ is in $\Kers i^*$ then  $T\Phi=0$ on $Z$, cf.~\eqref{jupiter}.

It follows from  \eqref{skvader} and \eqref{skvader99} that if $\Lk$ is in $\N_X$ and $\phi$ is in
$\E_X^{0,*}$, then $\Lk\phi$  is induced by a Noetherian operator in $\U$. 
Also other statements above about holomorphic
$\phi$ extend to $\phi$ in $\E_X^{0,*}$ by the same proofs. 
In particular, if 
 $\Lk_j$ is a generating set for the $\Ok_Z$-module $\N_X$  we get the pointwise norm
\begin{equation}\label{potens}
|\phi|_X^2=\sum_j|\Lk_j\phi|_Z^2,
\end{equation}
where $|\Lk_j\phi|_Z$ is a norm of forms on $Z$ induced by some Hermitian metric in ambient space.
If $\Ok_X$ is Cohen-Macaulay then \eqref{potens} is the smallest norm that (up to constants) dominates
each expression \eqref{basic4} obtained from the representation  \eqref{basic2} of $\phi$.

\section{An example}\label{kokong}

Consider the $2$-plane $Z=\{w_1=w_2=0\}$ in $\U\subset\C^4_{z_1,z_2,w_1,w_2}$,
where $\U$ the product of balls $\{|z|<1, \ |w|< 1\}$ in $\C^4$,  and let
$$
\J=\langle w_1^2,w_2^2, w_1w_2, w_1z_2-w_2z_1\rangle.
$$
Then $\Ok/\J$ has pure dimension $2$ and is Cohen-Macaulay except at the point $0\in\U$,
see, \cite[Example~6.9]{AL}. It is also shown there that $\Homs_\Ok(\Ok/\J,\CH^Z_\U)$ is generated by
$$
\mu_1=\dbar\frac{dw_1}{w_1}\w\dbar\frac{dw_2}{w_2}\w dz_1\w dz_2,\quad
\mu_2=(z_1w_2+z_2w_1)\dbar\frac{dw_1}{w_1^2}\w\dbar\frac{dw_2}{w_2^2}\w dz_1\w dz_2.
$$
Following the recipe in Proposition~\ref{stud}, $\mu_1$ only gives rise to the Noetherian operator $1$.
However, $\mu_2$ gives rise to all Noetherian operators obtained by the action of
$$
\frac{\partial}{\partial w_1},  \frac{\partial}{\partial w_2}, \frac{\partial}{\partial z_1}, \frac{\partial}{\partial z_2},
\frac{\partial^2}{\partial z_1\partial w_1}, \frac{\partial^2}{\partial z_1\partial w_2},
\frac{\partial^2}{\partial z_2\partial w_1}, \frac{\partial^2}{\partial z_2\partial w_2},
\frac{\partial^2}{\partial w_1\partial w_2}
$$
on $(z_1w_2+z_2w_1)\phi$.  They are
\begin{equation}\label{struma}
z_1\phi, z_2\phi, 0, 0, z_2\frac{\partial}{\partial z_1}, (1+z_1\frac{\partial}{\partial z_1})\phi,
(1+z_2\frac{\partial}{\partial z_2})\phi, z_1\frac{\partial}{\partial z_2}\phi, 
(z_1\frac{\partial}{\partial w_1}+z_2\frac{\partial}{\partial w_2})\phi.
\end{equation}
In view of the presens of $1$ from the $\mu_1$, we can forget about $z_j\phi$ and replace 
$1+z_j\frac{\partial}{\partial z_j}$ by $z_j\frac{\partial}{\partial z_j}$. Thus we get 
that
$$
|\phi|_X^2\sim |\phi|^2 +|z|^2\big|\frac{\partial\phi}{\partial z_1}\big|^2+|z|^2\big|\frac{\partial\phi}{\partial z_2}\big|^2
+\big|z_1\frac{\partial\phi}{\partial w_1}+z_2\frac{\partial\phi}{\partial w_2}\big|^2.
$$

\subsection{Functions in $X\setminus\{0\}$}\label{opium}
Let $\Lk_0=1$ and let $\Lk$ denote the right-most operator in \eqref{struma}.  
If $\phi$ is a $\Ok_X$-function defined in $Z\setminus\{0\}$, then both
$\Lk_0\phi$ and $\Lk\phi$ are holomorphic functions in $Z\setminus\{0\}$. Thus they
both have holomorphic extensions across $0$ that we denote by $\phi_0(z)$ and $h(z)$,  respectively.

Notice that $1, w_1$ is a basis for $\Ok_X$ over $\Ok_Z$ where $z_1\neq 0$, and similarly,
$1, w_2$ is a basis for $\Ok_X$ over $\Ok_Z$ where $z_2\neq 0$.
Notice that given any $\phi_0$ and $h$ in $\U$ we get  a $\Ok_X$-function $\phi$ in
$\U\setminus\{0\}$, defined as
\begin{equation}\label{struma2}
\phi=\phi_0+(h/z_1)w_1, \ z_1\neq 0;  \quad \phi=\phi_0+(h/z_2)w_2, \  z_2\neq 0.
\end{equation}
It is readily checked that $\Lk_0\phi=\phi_0$ and $\Lk\phi=h$. In other words, there is a $1-1$ correspondence
between  $\O_X$-functions $\phi$
in $Z\setminus\{0\}$  and $\Ok_Z^2$.

\begin{lma}\label{opium2}
The $\Ok_X$-function $\phi$ has an extension across $0$ if and only if $h(0)=0$.
\end{lma}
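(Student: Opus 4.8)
The plan is to characterize the extendability of $\phi$ purely in terms of the data $(\phi_0, h)$ from the $1$--$1$ correspondence established just before the lemma, and then to show that the obstruction is exactly the vanishing of $h$ at $0$. The key observation is the explicit form \eqref{struma2}: away from $z_1=0$ the function is $\phi=\phi_0+(h/z_1)w_1$, and away from $z_2=0$ it is $\phi=\phi_0+(h/z_2)w_2$. For $\phi$ to extend to an $\Ok_X$-function across the origin, we need a single representative $\hat\phi(z,w)$ holomorphic in a full neighborhood of $0$ in $\U$ whose image in $\Ok_X$ agrees with $\phi$ off the origin.

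First I would treat the sufficiency direction. Assuming $h(0)=0$, I would write $h$ near $0$ and exploit that the defining relation $w_1z_2-w_2z_1\in\J$ makes the two local expressions in \eqref{struma2} compatible as elements of $\Ok_X$. Concretely, the element $(h/z_1)w_1$ and $(h/z_2)w_2$ represent the same class in $\Ok_X$ on the overlap $z_1z_2\neq 0$, since their difference is $h(w_1/z_1-w_2/z_2)=h(w_1z_2-w_2z_1)/(z_1z_2)$, which lies in $\J$ divided by $z_1z_2$. To produce an honest holomorphic representative across $0$, I would use that $h(0)=0$ to write $h=z_1g_1+z_2g_2$ (or invoke that $h$ vanishing at $0$ lets us absorb the apparent poles), so that $hw_1/z_1$ and $hw_2/z_2$ can be replaced by holomorphic expressions modulo $\J$; the candidate representative $\hat\phi=\phi_0+g_1w_1+g_2w_2$ then extends holomorphically and restricts correctly. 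I would verify via the Noetherian operators $\Lk_0,\Lk$ that this $\hat\phi$ indeed yields $\Lk_0\hat\phi=\phi_0$ and $\Lk\hat\phi=h$, using \eqref{struma}.

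For the necessity direction, I would argue contrapositively: if $\phi$ extends to $0$ as an $\Ok_X$-function, then $\Lk\phi=h$ must extend as a holomorphic function, and moreover the value $h(0)$ is constrained. Here the crucial input is that $1,w_1,w_2$ cannot be an unrestricted basis at $0$ because $\Ok_X$ fails to be Cohen--Macaulay exactly at the origin. I would inspect $\Lk\phi=z_1\partial\phi/\partial w_1+z_2\partial\phi/\partial w_2$ applied to a genuine representative at $0$: since this operator has coefficients $z_1,z_2$ vanishing at the origin, $\Lk\hat\phi(0)=0$ automatically for any holomorphic $\hat\phi$, forcing $h(0)=\Lk\phi(0)=0$.

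I expect the main obstacle to be the sufficiency direction, specifically constructing the holomorphic representative and confirming it lies in the correct class modulo $\J$ across the singular point. The delicate point is that the naive expressions $hw_1/z_1$ and $hw_2/z_2$ have apparent singularities at $0$, and one must use the hypothesis $h(0)=0$ together with the syzygy $w_1z_2-w_2z_1\in\J$ to cancel these and patch the two charts into one global holomorphic representative. The necessity direction is comparatively immediate once one notes that the operator $\Lk$ has coefficients vanishing at $0$.
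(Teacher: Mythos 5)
Your proposal is correct and follows essentially the same route as the paper: the necessity direction is exactly the observation that $\Lk$ has coefficients $z_1,z_2$ vanishing at the origin, and the sufficiency direction is the same construction, writing $h=c_1z_1+c_2z_2$ and checking that $\phi_0+c_1w_1+c_2w_2$ agrees with \eqref{struma2} off the origin via the relation $w_1z_2-w_2z_1\in\J$. The only cosmetic difference is that you also offer to confirm the construction by applying $\Lk_0$ and $\Lk$ and invoking the $1$--$1$ correspondence, which is a valid alternative to the paper's direct comparison on $\U\setminus\{0\}$.
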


\begin{proof}
If $\phi$ is defined in $\U$ then $h=\Lk\phi$ in $\U$ and then clearly $h(0)=0$.  Conversely, if $h(0)=0$, then
$h(z)=c_1(z)z_1+c_2(z)z_2$ for some functions $c_1,c_2$ in $\U$. It is  readily checked that indeed
$\phi$, defined by \eqref{struma2},  coincides with 
$$
\phi_0(z)+c_1(z)w_1+c_2(z)w_2
$$
in $\U\setminus\{0\}$. Thus $\phi$ extends across $0$.
\end{proof}

In view of this lemma, if we take, e.g.,  $h=1$ in \eqref{struma2}, we get an $\Ok_X$-function $\phi$ in
$\U\setminus\{0\}$ that does not extend across $0$.

\section{Extension of $\N_X$ across $Z_{sing}$}\label{pontus}

Let $x$ be a fixed point on $Z_{sing}$.

\begin{lma} \label{raket}
There is a \nbh $\U$ of $x$
and  complete intersection  $Z_f=\{f_1=\cdots =f_p=0\}$ in $\U$ that contains $Z\cap\U$ 
and  such that $df:=df_1\w\ldots\w df_p$ is nonvanishing on $Z_{reg}\setminus Z(f)_{sing}$.
If $x'\in\U\setminus Z$ we can assume that $x'\notin Z(f)$.
\end{lma}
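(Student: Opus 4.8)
The plan is to take $f_1,\dots,f_p$ to be generic $\C$-linear combinations of a finite set of generators of the ideal $\J_Z$ on a small ball $\U$ about $x$, and then to verify each asserted property as a genericity statement in the space of coefficient matrices. Write $\J_Z=(g_1,\dots,g_r)$ on $\U$ and, for a matrix $c=(c_{ji})\in\C^{p\times r}$, put $f_j=f_j^c:=\sum_i c_{ji}g_i$. Since every $f_j$ vanishes on $Z$ we automatically have $Z\cap\U\subseteq Z(f)$, so it remains to choose $c$ so that (a) $Z(f)$ has pure codimension $p$, (b) $df=df_1\wedge\cdots\wedge df_p$ does not vanish identically on any component of $Z_{reg}$, and (c) $f(x')\neq0$ for a prescribed $x'\notin Z$. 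I claim each of (a), (b), (c) holds off a proper analytic subset of the parameter space $\C^{p\times r}$, whence a common $c$ exists.

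For (a) I would argue by induction, cutting down the dimension one step at a time. Suppose rows $c_1,\dots,c_{k-1}$ have been chosen so that $V_{k-1}:=\{f_1=\cdots=f_{k-1}=0\}$ has pure codimension $k-1$ and contains $Z$. As $k-1<p$, no irreducible component $W$ of $V_{k-1}$ lies in $Z$, so some $g_i$ is not identically zero on $W$; hence $\{c_k:\sum_i c_{ki}g_i|_W\equiv0\}$ is a proper linear subspace of $\C^r$. Since $\C$ is infinite, a generic row $c_k$ avoids this finite union of subspaces and therefore lowers the dimension of every component of $V_{k-1}$ by one, giving $V_k$ of pure codimension $k$ still containing $Z$. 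Iterating to $k=p$ produces a complete intersection $Z_f=V_p$ of codimension $p$ containing $Z$; by upper semicontinuity of fibre dimension this is achieved off a proper analytic subset of $\C^{p\times r}$.

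For (b) I would fix one regular point $y_0$ in each connected component of $Z_{reg}$. There $Z$ is smooth of codimension $p$, so its conormal space at $y_0$ is $p$-dimensional and is spanned by $dg_1(y_0),\dots,dg_r(y_0)$; consequently $df_1(y_0),\dots,df_p(y_0)$ are linearly independent, i.e.\ $df(y_0)\neq0$, for all $c$ outside a proper algebraic subset. For such $c$ the set $\{df=0\}\cap Z_{reg}$ is a proper analytic subset of $Z_{reg}$. Condition (c) is immediate: $x'\notin Z$ forces $g_i(x')\neq0$ for some $i$, so $f(x')\neq0$ off a proper (in fact linear) subset of $\C^{p\times r}$. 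Fixing a matrix $c$ in the intersection of these three complements yields $f=(f_1,\dots,f_p)$ satisfying (a), (b), (c).

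It remains only to match the zero locus of $df$ on $Z_{reg}$ with $Z(f)_{sing}$. By the Jacobian criterion for a codimension-$p$ complete intersection, any $y\in Z_f$ at which $df_1\wedge\cdots\wedge df_p$ vanishes is a singular point of $Z_f$; since $Z_{reg}\subseteq Z\subseteq Z_f$, every $y\in Z_{reg}$ with $df(y)=0$ lies in $Z(f)_{sing}$. Hence passing to $Z_{reg}\setminus Z(f)_{sing}$ removes precisely the zero set of $df$ inside $Z_{reg}$, and $df$ is nonvanishing there, as required; by (b) this set is moreover dense in $Z_{reg}$. I expect the main difficulty to be the genericity bookkeeping — verifying that each of the conditions (a), (b), (c) really is the complement of a proper subset so that they can be met simultaneously — while the Jacobian-criterion step and the reduction to a ball are routine.
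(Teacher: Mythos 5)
Your construction of $f$ as a generic linear combination of generators of $\J_Z$, the codimension count in (a), the nonvanishing of $df$ at one chosen point of each component of $Z_{reg}$ in (b), and the avoidance of $x'$ in (c) all match the paper's approach and are fine. The gap is in your last step. You assert, as ``the Jacobian criterion,'' that any $y\in Z_f$ with $df_1\w\cdots\w df_p(y)=0$ is a singular point of $Z_f$. That is the false converse of the implicit function theorem: it claims that $Z_f$ smooth at $y$ forces $df(y)\neq 0$. The simplest counterexample is $f=w^2$ in $\C^2_{z,w}$: here $Z_f=\{w=0\}$ is a smooth codimension-one complete intersection, yet $df=2w\,dw$ vanishes identically on it. The correct direction of the Jacobian criterion only gives $df(y)\neq0\Rightarrow y\in Z(f)_{reg}$, which is of no use here; so your argument establishes only that $\{df=0\}\cap Z_{reg}$ is a proper analytic subset, not that it is contained in $Z(f)_{sing}$, which is what the lemma demands.

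What is actually needed to close this is the unmixedness of complete intersection ideals, and this is precisely the content of the reference {\rm [}de Jong--Pfister, Theorem 4.3.6{\rm ]} that the paper invokes. Concretely: since $df\neq 0$ at your chosen point $y_0$ of a component $Z^\ell$ of $Z$ (which, having codimension $p$, is itself an irreducible component of $Z(f)$), the ideal $(f_1,\ldots,f_p)$ is radical near $y_0$, hence \emph{generically} radical along $Z^\ell$; because a complete intersection is Cohen--Macaulay and therefore has no embedded primary components, the primary component of $(f)$ along $Z^\ell$ is $\J_{Z^\ell}$ itself. Now take $y\in Z^\ell_{reg}\setminus Z(f)_{sing}$: smoothness of $Z(f)$ at $y$ means $Z^\ell$ is the only component of $Z(f)$ through $y$, so $(f)_y=\J_{Z^\ell,y}$, which near the smooth point $y$ is generated by $p$ coordinate functions; by Nakayama the $f_j$ are an invertible matrix times these coordinates, whence $df(y)\neq0$. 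Without this unmixedness step (or the cited theorem packaging it), the conclusion does not follow from what you have proved.
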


That is,  $Z(f)$ may have "unnecessary" irreducible components, but $df\neq 0$ at each point on 
$Z_{reg}$ that is not hit by any of these components.   
This is a well-known result but we have found no
good reference so we provide an argument. 

\begin{proof}  In a small enough \nbh  $\U$ of $x$ we can find a finite number of functions $g_1,\ldots, g_m$
that generate $\J_Z$. 
For each irreducible component $Z^\ell$ of $Z$ we choose a point $x^\ell\in Z^\ell_{reg}\cap \U$. 
Notice that $dg_j$ span the annihilator of the tangent bundle at $x^\ell$ for each $\ell$. 
If $f_1,\cdots, f_p$ are generic linear combinations of the  $g_j$, then
$df_j$ span these spaces as well for each $\ell$,  and  $f_j$ define a complete intersection $Z(f)$
that avoids $x'$.
Clearly $Z\subset Z(f)$ and $df\neq 0$ at $x^\ell$ for each $\ell$.  
It follows from \cite[Theorem 4.3.6]{JoPf} 
that $df$ is nonvanishing on the regular part of the irreducible component of $Z(f)$ that
contain $x^\ell$; i.e., on  $Z^\ell_{reg} \setminus Z(f)_{sing}$,  for each $\ell$.
\end{proof}




Let $f$  and $\U$ be as in the lemma and let us write $Z$ rather than $Z\cap\U$ etc. 
Since $df$ is generically nonvanishing on $Z$ we can choose
 coordinates 
$
 (\zeta,\eta)=(\zeta_1,\cdots,\zeta_n;\eta_1\cdots,\eta_p)
$ 
 in $\U$ such that 
$
H=\partial f/\partial\eta
$
is generically invertible on $Z$.  Let $h=\det H$. 
If 
\begin{equation}\label{mars}
w=f(\zeta,\eta), \  z=\zeta,
\end{equation}
then
$
dw\w dz=h d\eta\w d\zeta
$
and hence $(z,w)$ are local coordinates at each point on $Z\setminus\{h=0\}$.
Notice that
  \begin{equation}\label{svin1}
\frac{\partial}{\partial w}=H^{-1} \frac{\partial}{\partial \eta}, \
\quad \frac{\partial}{\partial z}=\frac{\partial}{\partial \zeta}-G\frac{\partial}{\partial w},
\end{equation}
where $G=\partial f/\partial\zeta$ is holomorphic. 
Since $H=\Theta/h$, where $\Theta$ is holomorphic,  
therefore 
\begin{equation}\label{svin2}
h\frac{\partial}{\partial w}=\Theta \frac{\partial}{\partial \eta}, \quad \
h\frac{\partial}{\partial z}=h\frac{\partial}{\partial \zeta}-G \Theta\frac{\partial}{\partial \eta}.
\end{equation}
For a sufficiently large multiindex $M=(M_1,\ldots,M_p)$ the complete intersection ideal
$\langle f^{M+\1}\rangle:=\langle f_1^{M_1+1},\ldots, f_p^{M_p+1}\rangle$ is contained in $\J$.  
Possibly after shrinking the \nbh $\U$ of $x$ there are generators $\mu_1,\ldots, \mu_\nu$ for $\Homs(\Ok_X,\CH_\U^Z)$ 
and holomorphic functions $a_k$ in  $\U$, cf.~Proposition~\ref{apa2},  such that
\begin{equation}\label{knorr}
\mu_k=a_k \frac{1}{(2\pi i)^p}\dbar\frac{1}{f^{M+1}}\w d\zeta\w d\eta.
\end{equation}
Notice that $a_k$ must vanish on the "unnecessary" irreducible components of $Z(f)$.

\begin{prop}\label{snabel2} 
With the notation above, the differential operators
\begin{equation}\label{gris}
\phi\mapsto \Lk_{m,\beta,k}\phi:= \Big(h\frac{\partial}{\partial w}\Big)^m \Big(h\frac{\partial}{\partial z}\Big)^\beta(a_k\phi),
\quad m\le M, \   |\beta|\le |M-m|, \ k=1,\ldots, \nu, 
\end{equation}
a~priori defined on $Z_{reg}\cap\{h\neq 0\}$, have holomorphic extensions to $Z$.
Moreover, they  belong to $\N_X$ on $Z_{reg}$ and generate the $\Ok_Z$-module
$\N_X$ where $h\neq 0$.
\end{prop}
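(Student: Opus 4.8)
The plan is to pass to the everywhere-defined coordinates $(\zeta,\eta)$, in which the a~priori singular operators \eqref{gris} become honest holomorphic differential operators, and then to transport by continuity the facts that are already available on the dense set $\{h\neq0\}$. First I would establish the holomorphic extension. Using \eqref{svin2} and expanding the composite $(h\partial/\partial w)^m(h\partial/\partial z)^\beta$ by the Leibniz rule, every derivative that falls on one of the holomorphic factors $\Theta,G,h$ again produces a holomorphic coefficient; hence this composite is a holomorphic differential operator $D_{m,\beta}$ in $(\zeta,\eta)$ with holomorphic coefficients on all of $\U$. For any representative $\Phi$ of $\phi$ the function $D_{m,\beta}(a_k\Phi)$ is then holomorphic on $\U$, so its restriction to $Z$ is an element of $\Ok_Z$ defined on all of $Z$; on $Z_{reg}\cap\{h\neq0\}$, where $(z,w)=(\zeta,f)$ are genuine coordinates, it coincides with $\Lk_{m,\beta,k}\phi$ as defined in \eqref{gris}. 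This provides the required holomorphic extension to $Z$.

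Next I would verify that $\Lk_{m,\beta,k}$ is a well-defined Noetherian operator and settle the generation statement where $h\neq0$. Re-expanding $D_{m,\beta}$ in the $(z,w)$-variables on $\{h\neq0\}$ yields a triangular combination $\sum_{m'\le m,\,\beta'\le\beta}c_{m',\beta'}\,\partial^{|\beta'|+|m'|}/\partial z^{\beta'}\partial w^{m'}$ with holomorphic $c_{m',\beta'}$ built from $h$ and its derivatives and top coefficient $c_{m,\beta}=h^{|m|+|\beta|}$. Thus on $\{h\neq0\}$ the operator $\Lk_{m,\beta,k}$ is the corresponding $\Ok_Z$-combination of the Noetherian operators $\partial(\phi a_k)/\partial z^{\beta'}\partial w^{m'}(\cdot,0)$ of Proposition~\ref{stud}; in particular $\Lk_{m,\beta,k}\phi=0$ there whenever $\Phi\in\J$. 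Since the extension is holomorphic on the reduced space $Z$ and $Z_{reg}\cap\{h\neq0\}$ is dense (because $h$ is generically invertible on $Z$ and $Z_{sing}$ is thin), the identity theorem forces $\Lk_{m,\beta,k}\phi=0$ on all of $Z$, so $\Lk_{m,\beta,k}$ descends to a Noetherian operator $\Ok_X\to\Ok_Z$. As $h$ is invertible on $\{h\neq0\}$ the triangular change above is invertible there, whence the $\Lk_{m,\beta,k}$ and the generators of Proposition~\ref{stud} span the same $\Ok_Z$-module; by that proposition they generate $\N_X$ where $h\neq0$.

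The remaining and hardest point is that each $\Lk_{m,\beta,k}$ lies in $\N_X$ across $\{h=0\}\cap Z_{reg}$, where the submersion $(\zeta,\eta)\mapsto\zeta$ degenerates and the representation above is unavailable. Here I would argue by coherence and density: at a point $p_0\in\{h=0\}\cap Z_{reg}$ pick genuine coordinates and a finite generating set $\Lk'_1,\ldots,\Lk'_r$ of the coherent sheaf $\N_X$ near $p_0$, and on $\{h\neq0\}$ write $\Lk_{m,\beta,k}=\sum_i g_i\Lk'_i$ with $g_i\in\Ok_Z(\{h\neq0\})$ by the previous step. The crux is to extend the coefficients $g_i$ holomorphically across the hypersurface $\{h=0\}$: since $\Lk_{m,\beta,k}$ is itself holomorphic across $\{h=0\}$ and the $\Lk'_i$ are holomorphic generators, the $g_i$ solve a holomorphic linear system whose symbol is invertible off a thin set, so the $g_i$ are locally bounded near $\{h=0\}$ and the singularity is removable; equivalently, one invokes that $\N_X$ is saturated in the locally free sheaf of bounded-order jets, so an operator that is holomorphic and lies in $\N_X$ off a hypersurface lies in $\N_X$ everywhere. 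This yields $g_i\in\Ok_Z$ near $p_0$ and hence $\Lk_{m,\beta,k}\in\N_X$ on all of $Z_{reg}$. I expect precisely this removable-singularity/saturation step to be the main obstacle of the proof.
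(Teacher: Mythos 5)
Your first two steps coincide with the paper's: the holomorphic extension follows immediately from \eqref{svin2} since $h\partial/\partial w$ and $h\partial/\partial z$ are holomorphic combinations of $\partial/\partial\eta$ and $\partial/\partial\zeta$, and on $\{h\neq0\}$ the invertible triangular change of basis reduces membership and generation to Proposition~\ref{stud}. The problem is your third step, which you yourself identify as the crux, and which as written has a genuine gap. The ``saturation'' property you invoke --- that an operator which is holomorphic on $Z$ and lies in $\N_X$ off a hypersurface lies in $\N_X$ everywhere --- is false for a general coherent submodule of a free $\Ok_Z$-module (take $\N=h\Ok_Z\subset\Ok_Z$ and the section $1$), and nothing in the construction of $\N_X$ gives it to you for free. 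Likewise, the coefficients $g_i$ in $\Lk_{m,\beta,k}=\sum_i g_i\Lk'_i$ solve a linear system whose matrix degenerates exactly on $\{h=0\}$, so there is no reason for them to stay bounded; the model case $g\cdot h=1$ forces $g=1/h$. Indeed the paper's own Lemma~\ref{pellets2} records precisely what such an abstract argument does yield: only $h^r\Lk\in\N_X$ for large $r$, not $\Lk\in\N_X$.

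The paper closes this gap by a concrete argument rather than a removable-singularity principle. At a point $x'\in Z_{reg}\cap\{h=0\}$ with $df\neq0$ one chooses new \emph{genuine} coordinates $w'=f$, $z'=c\zeta+b\eta$ for generic constants $b,c$; Proposition~\ref{stud} applied in these coordinates puts the operators $\partial(\phi a_k)/\partial (w')^m\partial(z')^\alpha$ into $\N_X$ near $x'$, and the relations \eqref{skor2} exhibit $h\partial/\partial w$ and $h\partial/\partial z$ as $\Ok_Z$-combinations of $\partial/\partial w'$ and $\partial/\partial z'$ with coefficients holomorphic across $\{h=0\}$, so $\Lk_{m,\beta,k}\in\N_X$ there. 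At points where $df=0$ (hit by an ``unnecessary'' component of $Z(f)$) a further argument is needed: one rewrites $(\partial/\partial w)^\gamma(a_k\phi)$ as a push-forward $\pi_*(\phi\,\alpha\,\mu_k)$ as in \eqref{plutt1}, which lies in $\N_X$ by the very definition of that sheaf, and then recovers the mixed $z$-derivatives via the perturbed coordinates $\zeta'=\zeta+b^\ell f$ and Proposition~\ref{fan1}. Your proposal would need to replace the saturation claim by something of this nature; as it stands the key step does not go through.
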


\begin{proof} 
It follows immediately from \eqref{svin2} that 
$\Lk_{m,\beta,k}$ have  holomorphic extensions to $Z$.   Since $(z,w)$ are local coordinates at a point on $Z_{reg}$ where
$h\neq 0$ it follows from Proposition~\ref{stud} that $\Lk_{m,\beta,k}$ belong to $\N_X$.  
By a simple induction argument it follows from the same proposition that they actually generate 
$\N_X$ there.  
We have to prove that $\Lk_{m,\beta,k}$ are in $\N_X$ on $Z_{reg}$ where $h=0$. 
Let $x'\in X_{reg}$ be such a point and assume that $df\neq 0$. 
For a generic choice of constant matrices $b,c$ we have that
$df\w d(c\zeta+b\eta)\neq 0$. Thus 
we can choose new coordinates 
$$
w'=f(\zeta,\eta), \quad z'=c\zeta+b\eta
$$
in a \nbh $\V$ of $x'$.  It follows that  %
\begin{equation}\label{skor}
\phi\mapsto\frac{\partial}{\partial (w')^m \partial (z')^\alpha} (\phi a_k), \quad m\le M, \ |\alpha|\le|M-m|,
\end{equation}
are in $\N_X$ in  $\V$.  
Since   $z'=bz+cw $, $w'=w$, in $\V\setminus\{h=0\}$, we have
$$
\frac{\partial}{\partial w}=\frac{\partial}{\partial w'}+\frac{\partial z'}{\partial w}\frac{\partial}{\partial z'}, \quad
\frac{\partial}{\partial z}=\frac{\partial z'}{\partial z}\frac{\partial}{\partial z'},
$$
so  from \eqref{svin2} we get that 
\begin{equation}\label{skor2}
h\frac{\partial}{\partial w_j}=h\frac{\partial}{\partial w_j'}+\sum_k d_{jk} \frac{\partial}{\partial z'_j},
\quad h\frac{\partial}{\partial z}=\sum_k d'_{jk} \frac{\partial}{\partial z'_j},
\end{equation}
where $d_{jk}, d'_{jk}$ %
have holomorphic extensions to $\V$.   
Thus  $\Lk_{m,\beta,k}$ are $\Ok_Z$-linear
combinations of \eqref{skor}, and hence in $\N_X$.

\smallskip
From now on we consider a point $x'\in Z_{reg}$ where $df=0$, i.e., some "unneccessary"
component of $Z(f)$ passes through $x'$.  Then certainly $h(x')=0$.  
Let $\pi$ be the projection $(z,w)\mapsto z$. By \eqref{mars}, \eqref{knorr}, and
\eqref{skvader}, 
\begin{equation}\label{plutt1}
\frac{1}{\gamma!}dz \, \big(\frac{\partial}{\partial w}\big)^\gamma (a_k\phi)=\pi_*\big(\phi a_k df\w dz\w f^{M-\gamma}\frac{1}{(2\pi i)^p}\dbar\frac{1}{f^{M+\1}}\big), \quad \gamma\le M, 
\end{equation}
in $\V\setminus\{h=0\}$.
Let $v=(v_1,\ldots, v_p)$ generate $\J_Z$ at $x'$
and assume first that $dv\w dz\neq 0$ so that $(z,v))$ are local coordinates in a
\nbh $\V$ 
of $x'$.   
Since $\J(f)\subset\J_Z$,  $f=Av$ for a holomorphic matrix $A$ in $\V$. At points $z\in Z\cap\V\setminus\{h=0\}$ 
both $f_j$ and $v_j$ are minimal sets of generators for $\J_Z$ so $A$ is invertible there.
Therefore also $(z,v)$ define the submersion $\pi$ in $\V\setminus\{h=0\}$.
 Since $f^{M-\gamma} df\w dz=\alpha d\eta\w d\zeta$, where $\alpha$ is holomorphic,
the right hand side of \eqref{plutt1} is 
$\pi_*(\phi \alpha \mu_k)$
which is $d\zeta$ times an element in $\N_X$ in $\V$. It follows that the left hand side of \eqref{plutt1} is
$d\zeta$ times $\Lk\phi$, where $\Lk$ extends to an element in $\N_X$ in $\V$.

Notice that if $b^\ell$ is a small constant $n\times p$-matrix, then $\eta'=\eta, \zeta'=\zeta+b^\ell f$
is a change of variables in $\V$, possibly after shrinking our  \nbh $\V$ of $x'$.  In fact,
$
d\eta'\w d\zeta'=d\eta\w(d\zeta+b^\ell df)
$
is nonvanishing if $b^\ell$ is small enough. Taking $w^\ell=f$, $z^\ell=\zeta'$, we get that
$w^\ell=w$, $z^\ell= z+b^\ell w$,  and hence
$$
dw^\ell\w dz^\ell= df\w d(\zeta+b^\ell df)=df\w d\zeta=h d\eta\w d\zeta, 
$$
where $h$ is the same function as in \eqref{skor2}.  As in the the preceding step of the proof
we conclude that
$$
\phi\mapsto \big(\frac{\partial}{\partial w^\ell}\big)^\gamma(a_k\phi), \quad \gamma\le M, 
$$
a~priori defined in $\V\setminus\{h=0\}$, have extensions to elements in $\N_Z$ in $\V$. 
By Proposition~\ref{fan1} there are  a finite set of such $b^\ell$ and holomorphic 
$d_{m,\beta,\ell,k}$ in a possible even smaller \nbh $\V$ of $x'$ such that
\begin{equation}\label{patrik}
 \big(\frac{\partial}{\partial w}\big)^m\big(\frac{\partial}{\partial z}\big)^\beta(a_k\phi)=
 \sum_\ell \sum_{\gamma\le M}  d_{m,\beta,\ell,\gamma}\big(\frac{\partial}{\partial w^\ell}\big)^\gamma(a_k\phi),
 \quad m\le M, \    |\beta|\le|M-m|.
\end{equation}
It  follows  
that all the operators on the left hand side of \eqref{patrik} 
are in $\N_X$ in $\V$.

Finally, if $dv\w d\zeta=0$ at $x'$ we introduce new coordinates $z'=c\zeta+b\eta, w'=w$ as
before so that $dv\w dz'\neq 0$. From what we have just proved, then
all 
$$
\big(\frac{\partial}{\partial w'}\big)^m\big(\frac{\partial}{\partial z'}\big)^\beta(a_k\phi)
$$
are holomorphic at $x'$.  It now follows from
\eqref{skor2} that  $\Lk_{m,\beta,k}$ are in $\N_X$ at $x'$. Thus Proposition~\ref{snabel2} is proved.     
\end{proof}

We can now formulate our main result of this section.

\begin{thm}\label{oxet}
Assume that $i\colon X\to \U\subset\C^N$ is an embedding.
Given a point $x\in Z_{sing}$ there is a \nbh $\U'\subset \U$ and a 
finite number of Noetherian differential operators  $\Lk_1, \ldots, \Lk_\nu$ in $\U'$ that
generate $\N_X$  on $\U'\cap Z_{reg}$. 
\end{thm}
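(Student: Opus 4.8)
The plan is to obtain the theorem from Proposition~\ref{snabel2} by taking the union of the operators associated to several complete intersections. Fix $x\in Z_{sing}$. Given a complete intersection $Z_f$ as in Lemma~\ref{raket} together with a coordinate splitting $(\zeta,\eta)$ making $h=\det(\partial f/\partial\eta)$ generically invertible, Proposition~\ref{snabel2} furnishes a finite family of Noetherian operators $\Lk_{m,\beta,k}$ that are holomorphic on all of $Z\cap\U$, that belong to $\N_X$ at every point of $Z_{reg}$, and that generate the $\Ok_Z$-module $\N_X$ on $\{h\neq0\}\cap Z_{reg}$. Along the hypersurface $\{h=0\}$ these operators degenerate and need not generate, so the strategy is to choose finitely many complete intersections $f^{(1)},\dots,f^{(r)}$, with associated functions $h^{(1)},\dots,h^{(r)}$, whose bad loci $W_i:=\{h^{(i)}=0\}\cap Z_{reg}$ satisfy $W_1\cap\cdots\cap W_r=\emptyset$ in a neighborhood $\U'$ of $x$; the sought-for operators $\Lk_1,\dots,\Lk_\nu$ are then all the operators coming from these $r$ choices.

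The key input is a genericity statement: for each fixed point $x'\in Z_{reg}\cap\U$, a generic choice of $(f,(\zeta,\eta))$ has $h(x')\neq0$. Indeed, at a regular point $x'$ the differentials of the generators $g_j$ of $\J_Z$ span the conormal space of $Z$, exactly as in the proof of Lemma~\ref{raket}; hence generic linear combinations $f_1,\dots,f_p$ have $df(x')\neq0$ with $x'\notin Z(f)_{sing}$, and a generic transverse splitting $(\zeta,\eta)$ then makes $\partial f/\partial\eta$ invertible at $x'$, i.e.\ $h(x')\neq0$. In particular no fixed positive-dimensional subvariety of $Z_{reg}$ can be contained in the bad locus $W_f$ of a generic $f$.

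It remains to produce the empty common intersection. Starting from an arbitrary $f^{(1)}$, the bad locus $W_1$ is a proper analytic subset of $Z_{reg}\cap\U$. By the genericity above a generic $f^{(2)}$ has $W_2$ containing no component of $W_1$, so $W_1\cap W_2$ has strictly smaller dimension; iterating and passing to finitely many components at each stage, after at most $n+1$ steps the common intersection is empty on $Z_{reg}$ in a suitably small $\U'$. Each of the resulting operators lies in $\N_X$ by Proposition~\ref{snabel2}, the total number is finite, and at every point of $\U'\cap Z_{reg}$ some $h^{(i)}$ is nonzero, so already the operators from that single choice generate $\N_X$ there; hence the whole family generates $\N_X$ on $\U'\cap Z_{reg}$. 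The main obstacle is this last dimension-drop argument: one must check that generic bad loci genuinely lower the dimension upon intersection and that the process terminates inside a single fixed neighborhood of the singular point $x$, which is where the genericity statement and the local finiteness of the analytic sets $W_i$ near $x$ are used.
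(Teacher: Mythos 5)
Your proposal is correct and follows essentially the same route as the paper: both deduce the theorem from Proposition~\ref{snabel2} applied to finitely many complete intersections and coordinate splittings, chosen so that the bad loci $\{h=0\}$ have empty common intersection on $Z_{reg}$ near $x$. The only difference is organizational: the paper first varies the choice of $\eta$ among the ambient coordinates to shrink the bad locus of a fixed $f$ down to $Z_{reg}\cap Z(f)_{sing}$ and then varies $f$ over a finite family with $Z=Z(f^1)\cap\cdots\cap Z(f^\nu)$, whereas you run a single generic dimension-drop induction on the loci $\{h^{(i)}=0\}\cap Z_{reg}$, which if anything makes the covering step more explicit.
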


Clearly, such a set of $\Lk_k$ define a coherent extension of $\N_X$ to $\U'\cap Z$.

\begin{proof}
Let $(\zeta,\eta)$ be global coordinates $(\zeta,\eta)$ in $\U$.  Proposition~\ref{snabel2} provides a 
\nbh $U'$ of $x$ and set of
Noetherian differential operators in $\U'$ that belong to $\N_X$ in $\U\cap Z_{reg}$ and
generate $\N_X$ on $\U\cap (Z_{reg}\setminus \{h=0\}$,
where $h=\det(\partial f/\partial\eta)$.   Applying the same proposition to a finite number of 
other global coordinate systems (or more simply, letting $\eta$ be other choices of
$p$ special coordinates among the given ones) we get a finite number of Noetherian differential operators 
in a possibly smaller \nbh $\U'$ of $x$ that belong to $\N_X$ and generate $\N_X$ $\U'\cap (Z_{reg}\setminus \{df=0\}$.
That is, our differential operators defined so far generate $\N_X$ everywhere on $Z_{reg}\cap \U'$
except on $Z_{reg}\cap Z(f)_{sing}$.  

If we make the same construction for a finite number $f^1,\ldots, f^\nu$
of complete intersections such that 
$Z=Z(f^1)\cap\cdots \cap Z(f^\nu)$, see Lemma~\ref{raket},
we thus get a \nbh $\U'$ of $x$ and finite set of differential operators as desired.
\end{proof}

\begin{ex}\label{kollekt}
Let $Z=\{f=0\}$ be a reduced subvariety of 
$\U\subset \C^{n+1}_\zeta$ and that $df\neq 0$ on $Z_{reg}$.  If $X$ is defined by 
$\J=\langle f^2\rangle $, then
$$
\mu = \dbar\frac{1}{f^2}\w d\eta \w d\zeta
$$
is a generator for $\Homs(\Ok_X, \CH_\U^Z)$.  
If we give $\zeta_1$ the role as $\eta$, then 
$h=\partial f/\partial\eta=\partial f/\partial\zeta_1$.
If we let 
$$
w=f, \quad  z_j=\zeta_{j+1}, \ j=1,\ldots, n,
$$
then, since $\Theta=1$ in \eqref{svin2},    
Proposition~\ref{snabel2} provides the operators 
$$
1, \  \frac{\partial}{\partial\zeta_1}, \quad  \frac{\partial}{\partial\zeta_k}+\frac{\partial f}{\partial\zeta_k}
\frac{\partial}{\partial\zeta_1}, \    k=2, \ldots, n+1.
$$
They generate the same $\Ok_Z$-module as
$1$, $\partial/\partial\zeta_k,  \   k=1, \ldots, n+1$,
and by obvious invariance they thus generate $\N_X$ on $Z_{reg}$, so there is no need to consider
operators obtained by other choices of $\eta$.
Moreover, the coherent extension across $Z_{sing}$ is 
independent of the choice of global coordinates $\zeta$  in $\U$.
\end{ex}


\subsection{Global pointwise norm on $X$}
In Example~\ref{kollekt}  the  extension of  $\N_X$ across $Z_{sing}$ is invariant.
We do not know whether this is true in general. In any case we can define a global
pointwise norm in the following way:
Each point $x\in Z_{sing}$ has a \nbh $\U_x$ where we have a coherent extension by
Theorem~\ref{oxet} and in $\U_x$ we thus have a pointwise norm
$|\cdot|_{X,x}$.
We can choose a locally finite open covering $\{ \U_{x_j}\}$ of $X$, and
a partition of unity  $\chi_j$ partition of unity subordinate to the covering and define
the global norm
$$
|\cdot|_X^2=\sum_j  \chi_j |\cdot |^2_{X, x_j}.
$$

\section{Pseudomeromorphic currents}\label{pm}

Let $Y$ be a reduced analytic space. The $\Ok_Y$-sheaf $\PM_Y$ of pseudomeromorphic currents on
$Y$ was introduced in \cite{AW2, AS}. Roughly speaking, it 
consists of currents that locally are finite sums of direct images (under possibly nonproper mappings)
of products of simple principal value currents and $\dbar$ of such currents. See, e.g., \cite{AW3}
for a precise  definition and basic properties.   The sheaf  $\PM_Y$ is closed under $\dbar$ and under
multiplication by smooth forms.  If $\tau$ is \pm  in an open subset $\U\subset Y$
and $W\subset\U$ is a subvariety then there is a well-defined current $\1_{\U\setminus W}\tau$ in $\U$
obtained by extending the natural restriction of $\tau$ to $\U\setminus W$ in the trivial way. With the notation
in Section~\ref{sep},  $\1_{\U\setminus W}\tau=\lim_\epsilon\chi(|h|/\epsilon)\tau$ if $h$ is a tuple of
holomorphic functions with common zero set $W$.  Thus $\1_W\tau:=\tau-\1_{\U\setminus W}\tau$ is a \pm 
current with support
on $W$.   If $W'\subset\U$ is another subvariety, then
\begin{equation}\label{kamrat}
\1_{W'}\1_W\tau=\1_{W'\cap W}\tau.
\end{equation}
We can rephrase the standard extension property, cf.~Section~\ref{sep}:
If $\tau$ has support on a subvariety $Z$ of pure dimension, then $\tau$ has the SEP with respect to
$Z$ if  for each open subset $\U\subset Y$ and subvariety $W\subset\U\cap Z$ with positive
codimension in $Z$,   $\1_W\tau=0$.

An important property is the dimension principle:  If  $\tau$ in $\PM_Y$ has bidegree $(*,p)$ and support on
a variety of codimension larger than $p$, then $\tau$ must vanish.

\smallskip

Recall that a current is semi-meromorphic if it is a smooth form times a meromorphic function,
considered as a principal value current.
We say that a current $\alpha$ in $\U\subset\C^N$ 
is {\it almost semi-meromorphic}, cf.~\cite{AW3}, 
if there is a modification $\pi\colon \tilde\U\to \U$ and a semi-meromorphic
current $\tilde\alpha$ in $\tilde\U$  such that $\alpha=\pi_*\tilde\alpha$. 
Notice that an almost semi-meromorphic $\alpha$ is smooth outside
an analytic set $W$ of positive codimension in $\U$.

\begin{ex}
Coleff-Herrera currents in $\U\subset\C^N$ are pseudomeromorphic.  
Almost semi-meromorphic currents are pseudomeromorphic and has the SEP
on $\U$. 
\end{ex}

In general one cannot multiply \pm currents. However, 
assume that  $\tau$ is \pm and $\alpha$ is 
almost semi-meromorphic in $\U$ and let $W$ be the analytic set
where $\alpha$ is not smooth. There is a unique \pm current $T$  
in $\U$  that coincides with the natural product $\alpha\w\mu$
in $\U\setminus W$ and such that $\1_W T=0$.
For simplicity we denote this current by $\alpha\w\mu$.  If $\alpha'$ is another
almost semi-meromorphic current in $\U$, then 
the expression $\alpha'\w\alpha\w\tau$ means $\alpha'\w(\alpha\w\tau)$.
The equality, \cite[Propostition~4.12]{AW3},  that 
\begin{equation}\label{commute}
\alpha'\w\alpha\w\tau=\alpha\w\alpha'\w\tau.
\end{equation}
always holds. However, in general it is {\it not} true that $\alpha'\w\alpha\w\tau=(\alpha'\w\alpha)\w\tau$.

\begin{ex} 
Let $f$ be a holomorphic function with non-empty zero set, let  $\alpha=1/f$, $\alpha'=f$, 
and let $\tau=\dbar/1/f)$ (or any nonzero current with support 
contained in $Z(f)$). 
 Then both sides of \eqref{commute} vanishes, but 
$\alpha' \alpha=1$ and so $(\alpha' \alpha)\tau=\tau$.
\end{ex}

Assume that $\tau$ is pseudomeromorphic,  $\alpha$ is almost semi-meromorphic,
$\xi$ is smooth, and  $V$ is any subvariety.   Then we have 
\begin{equation}\label{commute2}
\1_V \alpha\w \tau=\alpha\w \1_V \tau.
 \end{equation}
In particular:  If $\tau$ has support on and the SEP with respect to $Z$,  then also $\alpha\w\tau$ has 
(support on and) the SEP with respect to $Z$.

\section{Uniform limits of holomorphic functions}\label{proofB}

Let $\J$ be the ideal sheaf in $\U\subset\C^N$ so that $\Ok_X=\Ok_\U/ \J$ as before.
Let $E_k$ be Hermitian vector bundles, $E_0$ a trivial line bundle, with morphisms
$f_k\colon E_k\to E_{k-1}$, such that  
\begin{equation}\label{upp}
0\to \Ok(E_N)\stackrel{f_N}{\to} \cdots \stackrel{f_2}{\to}\Ok(E_1)\stackrel{f_1}{\to}\Ok(E_0)\to \Ok(E_0)/\J\to 0
\end{equation}
is  a free resolution of $\Ok_\Omega/\J$ in $\U$. 
In \cite{AW1} was introduced a 
residue current $R=R_p+\cdots +R_N$ with support on $Z$, where
$R_k$ have bidegree $(0,k)$ and take values in $\Hom(E_0,E_k)\simeq E_k$,
such that $f_{k+1} R_{k+1}-\dbar R_k=0$ for each $k$, which can be written more compactly as
$$
(f-\dbar)R=0
$$
if $f:=f_1+\cdots + f_N$.  
The current $R$ has the additional property that a holomorphic function $\Phi$ in $\U$ belongs to $\J$ if and only
if the current $\Phi R=0$.  In particular, $\phi R$ is a well-defined current for $\phi$ in $\Ok_X$.  
The assumption that $X$ has pure dimension implies that $R$ has the SEP with respect to $Z$,
see \cite[Section~3]{AS} or \cite[Section~6]{AL} for a proof.

Recall that $\phi$ is a meromorphic function on $X$ if 
$\phi=g/h$ where $h$ is not nilpotent, i.e., a representative of $h$
does not vanish identically on $Z$, 
and $g/h=g'/h'$ if $gh'-g'h=0$ in $\Ok_X$. 
Because of the SEP the product $\phi  R$ is a well-defined \pm current in $\U$ if $\phi$ is  meromorphic on $X$.
The following criterion for holomorphicity was proved in \cite{Astrong}.

\begin{thm}\label{strong} 
Assume that $i\colon X\to \U$ has pure dimension and $R$ is a current as above. 
If $\phi$ is meromorphic on $X$, then it is holomorphic if and only if
\begin{equation}\label{krit}
(f-\dbar)(\phi R)=0.
\end{equation}
\end{thm}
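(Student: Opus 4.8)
The plan is to establish both directions through the residue current $R$ and the characterization of holomorphicity via the equation $(f-\dbar)R=0$. The forward direction is essentially immediate: if $\phi$ is holomorphic on $X$, then $\phi R$ is a well-defined \pm current, and applying $(f-\dbar)$ and using that $R$ itself satisfies $(f-\dbar)R=0$ should give \eqref{krit} after accounting for how $(f-\dbar)$ interacts with multiplication by a holomorphic function. First I would verify that $(f-\dbar)(\phi R) = \pm \dbar\phi\wedge R + \phi(f-\dbar)R$; since $\phi$ holomorphic means $\dbar\phi = 0$ on $X$ (and $\phi R$ is supported where $R$ lives), the term $\dbar\phi\wedge R$ must vanish, leaving $\phi(f-\dbar)R = 0$.

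The substantive direction is the converse. Here $\phi$ is a priori only meromorphic on $X$, say $\phi = g/h$ with $h$ not nilpotent, so $\phi R$ is defined via the SEP as a \pm current, and we assume \eqref{krit}. The goal is to produce a holomorphic representative. The natural strategy is to show that the hypothesis \eqref{krit} forces $\phi R$ to actually equal $\Psi R$ for some holomorphic $\Psi$ in $\U$, since the defining property of $R$ is that $\Phi R = 0$ iff $\Phi \in \J$, so that $\phi R$ lying in the image of holomorphic functions under $\Phi \mapsto \Phi R$ is exactly what holomorphicity on $X$ should mean. Concretely, I would try to run a Koszul-type or homological argument: the condition $(f-\dbar)(\phi R)=0$ says the top-degree pieces of $\phi R$ are $\dbar$-exact relative to the complex $f$, and one descends through the resolution \eqref{upp} to extract a holomorphic $\Psi$ with $\phi R = \Psi R$.

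\textbf{The main obstacle} will be the converse direction, specifically controlling the potentially singular behavior of the meromorphic $\phi$ along $Z_{sing}$ and along $\{h=0\}\cap Z$, and ensuring that the \pm current $\phi R$ — defined only through the SEP — genuinely reassembles into $\Psi R$ for a \emph{holomorphic} $\Psi$ rather than merely a meromorphic or current-valued solution. The delicate point is that $(f-\dbar)(\phi R)=0$ is a statement about currents, and one must translate it into an honest holomorphic identity; this is where the pseudomeromorphic machinery of Section~\ref{pm} is essential, in particular the dimension principle (to kill spurious current components of the wrong bidegree/support) and the SEP (to control behavior across subvarieties of positive codimension in $Z$). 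Since the statement is explicitly attributed to \cite{Astrong}, I expect the actual proof in the paper to invoke that reference, and I would structure my argument to reduce \eqref{krit} to the precise formulation proved there rather than reprove the full residue-theoretic descent from scratch.
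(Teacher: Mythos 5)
The paper gives no proof of this theorem at all: it is quoted directly from \cite{Astrong} (``The following criterion for holomorphicity was proved in \cite{Astrong}''), which is exactly what you anticipate at the end of your proposal, so deferring to that reference is the paper's approach too. Your sketch of the easy direction (pass to a holomorphic ambient representative $\Phi$, so $\dbar\Phi=0$ identically and $(f-\dbar)(\Phi R)=\Phi(f-\dbar)R=0$) is correct, and your outline of the converse --- extracting a holomorphic $\Psi$ with $\Psi R=\phi R$ and then invoking the duality property $\Phi R=0\Leftrightarrow\Phi\in\J$ --- is consistent with what \cite{Astrong} actually does.
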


\begin{remark}
In \cite{AL} is introduced a notion of currents on $X$ so that 
\eqref{krit} can
be formulated as an intrinsic condition on $X$.
\end{remark}

To give the idea for the general case
let us first sketch a proof of Theorem~\ref{thmB}, relying on Theorem~\ref{strong},  in case $X$ is reduced. 

\begin{proof}[Proof of Theorem~\ref{thmB} in case $X$ is reduced] 
The statement is elementary on $X_{reg}$; moreover it is clear that $\phi_j\to \phi$ where $\phi$ is
weakly holomorphic and thus meromorphic on $X$. 

There is a  (unique) almost semi-meromorphic 
current $\omega$ on $X$ of bidegree $(n,*)$ such that  $i_* \omega=R\w dz$,
where  $(z_1,\ldots,z_N)$ are coordinates in $\U$, see \cite[Proposition~3.3]{AS}.
In particular, $\omega$ has the SEP on $X$.  
Let $\pi\colon X'\to X$ be a smooth modification so that $\omega=\pi_*\omega'$, where
$\omega'$ is semi-meromorphic. 
 
%
Since $\pi^*\phi_j\to \pi^*\phi$ uniformly and $X'$ is smooth, indeed $\pi^*\phi_j\to \pi^*\phi$
in $\E_{X'}$.  
Therefore  $\pi^*\phi_j \omega'\to \pi^*\phi \omega'$. 
Since $\phi_j$ are smooth,  $\pi_*(\pi^*\phi_j \omega')=\phi_j \omega$.
Combining we find that
\begin{equation}\label{toto}
\phi_j\omega\to \pi_*(\pi^*\phi \omega').
\end{equation}
Since $\omega'$ has the SEP, so have $\pi^*\phi\omega'$ and hence $\pi_*(\pi^*\phi \omega')$.
Moreover,   
\begin{equation}\label{orkan}
\pi_*(\pi^*\phi \omega')=\phi \omega
\end{equation}
on the open subset of $X$ where $\phi$ is holomorphic, thus at least on $X_{reg}$.  
Since both sides of \eqref{orkan} have the SEP and 
coincide outside a set of positive codimension we conclude that they indeed coincide on $X$. 
Therefore \eqref{toto} means that 
$\phi_j\omega\to \phi\omega
$
and applying $i_*$ we get 
$
\phi_j R\to \phi R
$
and  hence
$
(f-\dbar)(\phi_j R)\to (f-\dbar)(\phi R).
$
It now follows from Theorem~\ref{strong} that 
$\phi$ is indeed holomorphic. 
 \end{proof}

For the rest of this section we will discuss the proof of the non-reduced case
when $Z$ is smooth.  

\begin{lma}\label{chcase}
Theorem~\ref{thmB} is true when $Z$ is smooth and $\Ok_X$ is Cohen-Macaulay.
\end{lma}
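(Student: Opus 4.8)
The plan is to exploit the Cohen--Macaulay hypothesis to reduce the statement to the classical completeness of $\Ok_Z$ on the smooth manifold $Z$, using the two directions of the norm equivalence from Theorem~\ref{thmA}~(iii) and \eqref{atlas4}. Since uniform convergence on compact subsets can be checked on a finite cover by coordinate charts, and on overlaps the various local norms are equivalent, the whole argument is local and I may fix a single chart.

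First I would fix a point of $Z$ and local coordinates $(z,w)$ with $Z=\{w=0\}$. Since $\Ok_X$ is Cohen--Macaulay there is a minimal set of monomials $1,w^{\alpha_1},\ldots,w^{\alpha_{\nu-1}}$ so that each $\phi$ in $\Ok_X$ has a \emph{unique} representation \eqref{basic2} with coefficients $\hat\phi_i$ in $\Ok_Z$. By Proposition~\ref{kokos} the coefficients satisfy $\sum_i|\hat\phi_i(z)|\lesssim|\phi(z)|_X$ on compact subsets, since the operators producing the $b_\gamma$ belong to $\N_X$. Applying this to the differences $\phi_k-\phi_l$ and using that $\{\phi_k\}$ is uniformly $|\cdot|_X$-Cauchy on compacts, I conclude that for each $i$ the sequence $\widehat{(\phi_k)}_i$ of ordinary holomorphic functions on $Z$ is uniformly Cauchy on compact subsets.

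Next, since $Z$ is a smooth manifold, classical completeness \cite[Theorem~7.4.9]{Ho} provides holomorphic $\hat\phi_i$ in $\Ok_Z$ with $\widehat{(\phi_k)}_i\to\hat\phi_i$ uniformly on compacts. I would then define $\phi$ in $\Ok_X$ by the representation \eqref{basic2} with these limiting coefficients; this is automatically a holomorphic function on $X$, so in this case no appeal to the residue-theoretic criterion Theorem~\ref{strong} is needed. It remains to upgrade coefficient convergence to convergence in $|\cdot|_X$. Here I use the reverse inequality in \eqref{atlas4}: locally $|\psi|_X$ is equivalent to a \emph{finite} sum of the quantities $|\partial_z^\beta\partial_w^m(\psi a_j)(z,0)|$, and expanding $\psi a_j=\sum_i\hat\psi_i(z)w^{\alpha_i}a_j(z,w)$ shows each such quantity is an $\Ok_Z$-linear combination of finitely many $z$-derivatives of the coefficients $\hat\psi_i$. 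Applying this to $\psi=\phi_k-\phi$ and invoking the Cauchy estimates, so that uniform convergence $\widehat{(\phi_k)}_i\to\hat\phi_i$ on compacts forces uniform convergence of all their $z$-derivatives on slightly smaller compacts, yields $\sup_K|\phi_k-\phi|_X\to 0$ for every compact $K$, which is the assertion.

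The content of the argument is entirely carried by the two directions of the norm equivalence, so the point to get right is that \eqref{atlas4} exhibits $|\cdot|_X$ as controlled by only \emph{finitely many} derivatives of the coefficient functions; this finiteness is precisely what lets the Cauchy estimates convert uniform coefficient convergence into genuine $|\cdot|_X$-convergence, and it is where the Cohen--Macaulay hypothesis (through the unique representation \eqref{basic2} and Proposition~\ref{kokos}) is essential. The passage to slightly smaller compact sets in the final step is the only mildly delicate technical point, but it is routine given a compact exhaustion of $X$.
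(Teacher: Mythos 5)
Your argument is correct and is essentially the paper's own proof: extract uniform Cauchy coefficient sequences via the Cohen--Macaulay representation \eqref{basic2} and Theorem~\ref{thmA}~(iii), take limits on the smooth $Z$ by classical completeness, and recover $|\cdot|_X$-convergence from \eqref{atlas4} together with the Cauchy estimates for derivatives of the coefficients. The only difference is that you spell out the final step (finitely many $z$-derivatives, shrinking compacts) which the paper compresses into one sentence.
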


\begin{proof}
Consider an open  set $\U\subset X$ where we have
coordinates $(z,w)$, a basis $w^{\alpha_\ell}$ and  $\Ok_X$ is Cohen-Macaulay. Then we have unique
representatives in $\U$, cf.~Section~\ref{kolsyra}, 
$$
\hat\phi_j=\sum_\ell \hat\phi_{jl}(z)\otimes w^{\alpha_\ell}
$$
of $\phi_j$ in Theorem~\ref{thmB}.  By the hypothesis and Theorem~\ref{thmA}~(iii) it  follows that 
$\hat\phi_{j\ell}$ is a Cauchy sequence for each fixed $\ell$, and hence we have holomorphic
limits $\hat\phi_\ell=\lim_j\hat\phi_{j\ell}$ for each $\ell$. Let us define the function  
$$
\hat\phi:=\sum_\ell \hat\phi_\ell(z)\otimes  w^{\alpha_\ell}
$$
in $\U$ and let $\phi$ be its pullback to $\Ok_X$. 
Since the convergence holds for all derivatives of $\hat\phi_{j\ell}$ as well,
it follows from \eqref{atlas4} that  $|\phi_j-\phi |_X\to 0$.  
\end{proof}

The non-Cohen-Macaulay case is trickier.
Let us first look at an example. 

\begin{ex} \label{opium3}
Consider the space $X$ in Section~\ref{kokong}.
If $\phi_j$ is  a sequence as in Theorem~\ref{thmB}, it follows from Lemma~\ref{chcase} 
that $\phi_j$ has a  holomorphic limit $\phi$ in $X\setminus \{0\}$.  
Let $\Lk$ be the Noetherian operator in Section~\ref{opium}
and recall  that $\Lk\phi$ is a well-defined function on $Z$. 
By the hypothesis in Theorem~\ref{thmB},  $\Lk\phi_j$ is a Cauchy sequence on  $Z$ and since
$\Lk\phi_j\to \Lk\phi$ in $Z\setminus\{0\}$ we conclude that $\Lk\phi_j\to \Lk\phi$ 
uniformly in $Z$. Since $\Lk\phi_j(0)=0$ therefore  $\Lk\phi(0)=0$, and thus
$\phi$ is $\Ok_X$-holomorphic in $X$, cf.~Lemma~\ref{opium2}.  
It follows that $|\phi_j-\phi|_X\to 0$ on $X$.
\end{ex}

We cannot see how the argument in Example~\ref{opium3} can be extended directly, so we have
to go back to the origins of our $\Lk_j$ which are the currents.

\begin{proof}[Proof of Theorem~\ref{thmB} when $Z$ is smooth]
The statement is local so we can fix a point $x\in X$ and an embedding 
$i\colon X\to \U$ as usual.  
After possibly shrinking $\U$ we may assume that we have a 
a Hermitian free resolution \eqref{upp} and the associated residue current $R$
in $\U$.  
We will use, \cite[Lemma~6.2]{AL}:

\begin{prop}\label{goliat}
There is a trivial vector bundle $F\to \U$ and an $F$-valued Coleff-Herrera current $\mu$
such that its entries generate $\Homs(\Ok_X,\CH_\U^Z)$, and
an almost semi-meromorphic current $\alpha=\alpha_0+\cdots +\alpha_n$, where
$\alpha_k$ have bidegree $(0,k)$ and take values in $\Hom(F,E_{p+k})$,
such that 
$$
R\w dz=\alpha\mu, \quad R_{p+k}\w dz=\alpha_k\mu, \ k=0,1,\ldots, n.
$$   
Moreover, $\alpha$ is smooth where $\Ok_X$ is Cohen-Macaulay.
\end{prop}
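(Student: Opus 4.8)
The plan is to read $\mu$ and $\alpha$ off directly from the residue current $R$ attached to the resolution \eqref{upp}. Since the sheaf $\Homs(\Ok_X,\CH^Z_\U)$ is coherent, after shrinking $\U$ I can choose finitely many generators and stack them into a single $F$-valued Coleff--Herrera current $\mu$, where $F=\U\times\C^r$ is trivial; this fixes $\mu$ once and for all, so the generation claim holds by construction. The real task is then to exhibit $R\w dz$ as an almost semi-meromorphic combination $\alpha\mu$ of these generators. Matching bidegrees, $R_{p+k}\w dz$ has bidegree $(N,p+k)$ while $\mu$ has bidegree $(N,p)$, so the sought $\alpha_k$ must be almost semi-meromorphic of bidegree $(0,k)$ with values in $\Hom(F,E_{p+k})$, exactly as in the statement.

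Next I would analyze the Cohen--Macaulay locus $W\subset Z$, whose complement $V=Z\setminus W$ has positive codimension in $Z$. Where $\Ok_X$ is Cohen--Macaulay the residue current reduces to its top component, $R|_W=R_p|_W$, equivalently $R_{p+k}|_W=0$ for $k\ge 1$; this is a standard property of $R$, see \cite{AW1}. Moreover, comparing the $E_p$-components of bidegree $(0,p+1)$ in $(f-\dbar)R=0$ gives $\dbar R_p=f_{p+1}R_{p+1}$, which vanishes on $W$, so there $R_p\w dz$ is a genuine $\dbar$-closed Coleff--Herrera current annihilated by $\J$; hence it is an $\Ok_X$-combination of the generators $\mu$, yielding a holomorphic $\alpha_0$ on $W$ with $\alpha_0\mu=R_p\w dz$, while $\alpha_k=0$ for $k\ge 1$. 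This already establishes the last clause of the proposition, that $\alpha$ is smooth on the Cohen--Macaulay locus.

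It remains to extend $\alpha$ across $V$ as an almost semi-meromorphic current and to verify the global identity. For the extension I would use that the minimal inverses $\sigma_k$ of the morphisms $f_k$, which enter the Andersson--Wulcan construction of $R$, are almost semi-meromorphic and smooth where the $f_k$ have locally constant rank, in particular on $W$, see \cite{AW1,AW3}; assembling the $\sigma_k$ produces almost semi-meromorphic $\alpha_k$ on all of $\U$, smooth outside $V$, reducing on $W$ to the coefficients found above. Finally the identity $R_{p+k}\w dz=\alpha_k\mu$ is verified via the standard extension property: the left-hand side inherits the SEP of $R$, and by \eqref{commute2} the product $\alpha_k\mu$ of an almost semi-meromorphic form with a current having the SEP again has the SEP with respect to $Z$. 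Since the two sides agree on $W$, their difference is a current with the SEP supported in the positive-codimension set $V$, hence it vanishes, and the factorization holds on all of $\U$.

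The step I expect to be the main obstacle is the almost semi-meromorphic regularity of $\alpha$ across $V$: the coefficients are determined a priori only as smooth objects on $W$, and one must show that they are semi-meromorphic after a single modification rather than merely pseudomeromorphic currents that happen to be smooth on $W$. This is where the explicit structure of the $\sigma_k$, together with a principalization of the degeneracy loci of the $f_k$, is needed, and it is the part genuinely inherited from the residue-current machinery of \cite{AW1,AW3} rather than from the elementary facts recalled in Section~\ref{pm}.
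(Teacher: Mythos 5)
The paper does not actually prove this proposition: it is quoted verbatim from \cite[Lemma~6.2]{AL} (the sentence immediately preceding the statement reads ``We will use, \cite[Lemma~6.2]{AL}''). So there is no internal proof to compare with, and your proposal has to be judged as a reconstruction of the argument in \cite{AL}. Its overall architecture is the right one and does match that reference: choose finitely many generators of the coherent sheaf $\Homs(\Ok_X,\CH^Z_\U)$ to form the $F$-valued current $\mu$; observe that on the Cohen--Macaulay locus $R=R_p$, $\dbar R_p=f_{p+1}R_{p+1}=0$ and $\J R_p=0$, so the entries of $R_p\w dz$ are there sections of $\Homs(\Ok_X,\CH^Z_\U)$ and hence holomorphic combinations of $\mu$; and finally globalize by the SEP, using \eqref{commute2} and the fact that the non-Cohen--Macaulay locus has positive codimension in $Z$. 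Those steps are correct (modulo the harmless point that the holomorphic solution $\alpha_0$ of $R_p\w dz=\alpha_0\mu$ is a priori only locally defined, which is fine since the statement is local after shrinking $\U$).

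The genuine gap is exactly the step you flag at the end, and the mechanism you propose for it does not close it. The minimal inverses $\sigma_k$ of the Andersson--Wulcan construction relate the higher components of $R$ to the bottom one --- generically $R_{p+k}=\dbar\sigma_{p+k}\cdots\dbar\sigma_{p+1}\,R_p$ --- so they would produce almost semi-meromorphic $\alpha_k=\dbar\sigma_{p+k}\cdots\dbar\sigma_{p+1}\,\alpha_0$ \emph{once} $\alpha_0$ is known to be almost semi-meromorphic on all of $\U$. But they say nothing about $\alpha_0$ itself: the identity $R_p\w dz=\alpha_0\mu$ couples $R$ to the independently chosen generators $\mu$, which the $\sigma_k$ do not see. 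What is actually needed, and what constitutes the real content of \cite[Lemma~6.2]{AL}, is an argument that the holomorphic matrix $\alpha_0$ defined off the non-Cohen--Macaulay locus extends meromorphically (for instance that $h\alpha_0$ extends holomorphically for a suitable $h\not\equiv 0$ on $Z$); in \cite{AL} this comes from a comparison-of-free-resolutions argument in the spirit of \cite{AWsemester}, not from the regularity of the $\sigma_k$. Without that step the factorization $R\w dz=\alpha\mu$ is established only over the Cohen--Macaulay locus. (A cosmetic point: you use $W$ for the Cohen--Macaulay locus, whereas the paper uses $W$ for its complement; keep the conventions straight if you write this up.)
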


 Let $W$ be the subset of $Z$ where $\Ok_X$ is not Cohen-Macaulay.  It is well-known that
$W$ has at least codimension $2$ in $Z$.

\begin{lma}\label{ponny}
If $\phi$ is holomorphic in $X\setminus W$, then $\phi$ has a meromorphic extension
to $X$.  
\end{lma}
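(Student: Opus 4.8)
The plan is to realise $\phi$ as a section of the sheaf $\mathcal{M}_X$ of meromorphic functions on $X$, the point being that near a point of $W$ the obstruction to \emph{holomorphic} extension is exactly a pole, so only a meromorphic extension can be expected. Working locally near a fixed $x\in W$, I first make $\Ok_X$ into an $\Ok_Z$-module via a local submersion $\pi$, as in Section~\ref{kolsyra}. Since $X$ has pure dimension $n=\dim Z$, the module $\Ok_X$ has no associated primes of dimension $<n$; hence its torsion submodule, being supported in positive codimension, must vanish. Thus $\Ok_X$ is torsion-free over the local domain $\Ok_Z$ and embeds into $\mathcal{M}_X:=\Ok_X\otimes_{\Ok_Z}\mathcal{M}_Z$, where $\mathcal{M}_Z$ denotes germs of meromorphic functions on the smooth space $Z$. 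It therefore suffices to extend $\phi$, as a section of $\mathcal{M}_X$, across $W$.

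Next I would choose coordinates $(z,w)$ at $x$ with $Z=\{w=0\}$ together with monomials $1,w^{\alpha_1},\ldots,w^{\alpha_{\nu-1}}$ forming a basis of $\Ok_X$ over $\Ok_Z$ on the Cohen--Macaulay locus, where $\nu$ is the generic rank of $\Ok_X$. Because $Z$ is smooth and hence locally irreducible, $\mathcal{M}_{Z,x}$ is a field, so these $\nu$ monomials, being a free basis on a dense open set, are a basis of the $\nu$-dimensional $\mathcal{M}_Z$-vector space $\mathcal{M}_X$. On $Z\setminus W$, where $\phi$ is a holomorphic, hence meromorphic, section of $\Ok_X\subset\mathcal{M}_X$, I expand
\begin{equation*}
\phi=\hat\phi_0+\hat\phi_1 w^{\alpha_1}+\cdots+\hat\phi_{\nu-1}w^{\alpha_{\nu-1}},
\end{equation*}
where the $\hat\phi_j$ are the uniquely determined coefficients in this basis; they are germs of meromorphic functions on $Z\setminus W$, obtained by Cramer's rule from any local holomorphic frame, whose change-of-basis determinant is a nonzero meromorphic function.

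Finally, since $W$ has codimension at least $2$ in the smooth manifold $Z$, Levi's extension theorem (the second Riemann continuation theorem for meromorphic functions) allows each $\hat\phi_j$ to be extended to a meromorphic germ $\widetilde{\hat\phi}_j$ at $x$; then $\widetilde\phi:=\sum_j \widetilde{\hat\phi}_j\,w^{\alpha_j}\in\mathcal{M}_X$ agrees with $\phi$ on $Z\setminus W$ and is the required extension, the various local extensions gluing because each coincides with $\phi$ on a dense open set. The crux of the matter—and the reason the conclusion is only meromorphic—is that a frame of $\Ok_X$ holomorphic on the Cohen--Macaulay locus need not extend to a holomorphic frame across $W$: the coefficients $\hat\phi_j$ genuinely acquire poles along $W$, as already the function in Section~\ref{opium} shows, where the coefficient of $w_1$ is $1/z_1$. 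Thus Hartogs' holomorphic extension is unavailable and the meromorphic extension across the codimension-$2$ set $W$ is exactly what is needed; the only preliminary requiring care is the torsion-freeness of $\Ok_X$, which is precisely where purity of $X$ is used.
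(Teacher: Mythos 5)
Your argument is correct, but it takes a genuinely different route from the paper. You extend the \emph{monomial} coefficients of $\phi$, which are only meromorphic on $Z\setminus W$ (a fixed set of $\nu$ monomials is a free basis only on a dense open subset of the Cohen--Macaulay locus, as your own reference to Section~\ref{opium} shows), and you therefore need Levi's continuation theorem for \emph{meromorphic} functions across the codimension-two set $W$. The paper instead extends the coefficients $b_\gamma$ of the current $\phi\mu$ in the representation \eqref{kolibri0}: these are \emph{holomorphic} on all of $Z\setminus W$, so only the second Riemann (Hartogs) extension theorem for holomorphic functions is needed; the poles are then introduced in a controlled way by applying a globally defined meromorphic left inverse $S$ of the holomorphic matrix $T$ that converts monomial coefficients into the $b_\gamma$. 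What your approach buys is independence from the Coleff--Herrera machinery: the only inputs are the torsion-freeness of $\Ok_X$ over $\Ok_Z$ (which you correctly derive from purity of dimension --- this is the one preliminary the paper leaves implicit, since there injectivity of $\phi\mapsto\phi\mu$ plays that role via the duality principle) and classical extension theorems on the smooth $Z$. What the paper's route buys is that it stays within the current-theoretic framework already set up for the proof of Theorem~\ref{thmB} and avoids invoking the meromorphic version of the extension theorem. Two small points of precision in your write-up: the monomials form a basis only on a dense open subset of the Cohen--Macaulay locus, not on all of it (your argument only uses the weaker statement, so nothing breaks); and one should note that $\Ok_X\otimes_{\Ok_Z}\mathcal{M}_Z$ lands inside the paper's meromorphic functions $g/h$ on $X$ because a nonzero $h\in\Ok_Z$ pulls back to a non-nilpotent element of $\Ok_X$.
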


It seems that this result should be well-known but we provide a proof since we could not find a reference.

\begin{proof}
We can assume that we have coordinates $(z,w)$ as usual in $\U$.  
Let $\mu$ be the tuple above and consider the representation \eqref{kolibri}.  Fix a $x'\in Z$ 
where $\Ok_X$ is Cohen-Macaulay and 
a monomial basis $1,\ldots, w^{\alpha_{\nu-1}}$ for $\Ok_X$ over $\Ok_Z$ in a \nbh  $\U'$ of $x'$.
We then have the 
$M\times \nu$-matrix $T$ in $\U'$ that for each holomorphic $\phi$ in $\Ok(X\cap\U')$ maps
the coefficients of its representative $\hat\phi$ in this monomial basis onto the
coefficients of the expansion \eqref{kolibri0} of $\phi\mu$, cf.~Section~\ref{polyp}.

Notice that the entries in $T$ are $\C$-linear combinations of the coefficients
of the representation \eqref{kolibri} of $\mu$ in $\U$.  Thus $T$ has a holomorphic extension to $Z$
(we may assume that $Z$ is connected).  
As pointed out in Section~\ref{polyp},   $T$ is pointwise injective in $Z\cap \U'$ and hence,
possible after reordering of the rows,  $T=(T'\ \ T'')^t$ where $T'$ is  a
$\nu\times\nu$-matrix  that is invertible in $\U'$.   Thus $T'$ has a meromorphic inverse $S'$ in $Z$ and if
$S=(S' \ \ 0)$, then $ST=I$.

Since $\phi$ is holomorphic outside $W$, it defines a tuple $b$ in $\Ok_Z^M$ in $Z\setminus W$
via the representation \eqref{kolibri0} of $\phi\mu$.  Since $W$ has at least codimension $2$
the tuple $b$ extends to $Z$.  
Now 
$$
\tilde \Phi:=\sum_j (Sb)_j(z)w^{\alpha_j}
$$
is a meromorphic function in $\U$ that defines a meromorphic function $\tilde\phi$ on $X$, since 
$(Sb)_j(z)$ are meromorphic on $Z$.  Moreover, $\tilde\Phi=\hat\phi$ in $\U'$ and so 
$\tilde\phi$ coincides with $\phi$ in $X\cap \U'$. 
By uniqueness 
$\tilde\phi=\phi$ in $\U\setminus W$ and thus $\tilde\phi$ is the desired meromorphic extension. 
\end{proof}

 If $\phi_j$ is a Cauchy sequence in $|\cdot|_X$-norm and $Z$ is smooth
 thus $\phi_j\to \phi$ uniformly on compact subsets
 of $X\setminus W$ by Lemma~\ref{chcase} and $\phi$ has a meromorphic extension to $X$
by Lemma~\ref{ponny}.


\begin{lma}\label{partout}  
With this notation 
 $\phi_j R\to \phi R$.
\end{lma}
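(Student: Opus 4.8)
The plan is to exploit the factorization $R\w dz=\alpha\mu$ from Proposition~\ref{goliat}, together with the fact that $\mu$ is a Coleff-Herrera current whose entries generate $\Homs(\Ok_X,\CH_\U^Z)$. The key observation is that multiplication $\phi\mapsto\phi\mu$ is, in the coordinates $(z,w)$ and via the representation \eqref{kolibri0}, governed by exactly the Noetherian operators that build up $|\cdot|_X$. Concretely, writing $\phi_j\mu=\sum_\gamma b_\gamma^{(j)}\,dz\w\dbar(dw/w^{\gamma+1})$, the coefficients $b^{(j)}_\gamma$ are, up to the constant $(2\pi i)^p$, among the operators $\Lk_{\mu_k,\gamma}$ in $\N_X$, cf.~\eqref{pelargon2} and Section~\ref{polyp}. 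Hence the hypothesis that $\phi_j$ is Cauchy in $|\cdot|_X$-norm forces each $b^{(j)}_\gamma$ to be a uniformly Cauchy sequence of holomorphic functions on $Z$, so $b^{(j)}_\gamma\to b_\gamma$ uniformly on compacts.

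First I would record that $\phi_j\mu\to\sum_\gamma b_\gamma\,dz\w\dbar(dw/w^{\gamma+1})=:S$ in the current sense, since uniform convergence of the holomorphic coefficients against the fixed Coleff-Herrera factors gives convergence of the currents. Next I would identify the limit: on $X\setminus W$ we already know $\phi_j\to\phi$ uniformly (Lemma~\ref{chcase}), so $\phi_j\mu\to\phi\mu$ there, whence $S=\phi\mu$ on $\U\setminus W$. Both $S$ and $\phi\mu$ are \pm currents with the SEP with respect to $Z$ (the former because its coefficients are holomorphic, so it is Coleff-Herrera; the latter because $\phi$ is meromorphic on $X$ by Lemma~\ref{ponny} and $\mu$ has the SEP). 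Since they agree outside the set $W$ of codimension at least $2$ in $Z$, the SEP forces $S=\phi\mu$ on all of $\U$; thus $\phi_j\mu\to\phi\mu$.

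Finally I would transfer this from $\mu$ to $R\w dz$. By Proposition~\ref{goliat} we have $R\w dz=\alpha\mu$ with $\alpha$ almost semi-meromorphic, so $\phi_j R\w dz=\alpha\w(\phi_j\mu)$ by the commutation rule \eqref{commute2} (noting that $\alpha$ is smooth off $W$ and the relevant products are the canonical \pm ones). Since multiplication by a fixed almost semi-meromorphic current is continuous on the relevant \pm currents — again using \eqref{commute2} to localize off the non-smooth locus and the SEP to fill in across it — the convergence $\phi_j\mu\to\phi\mu$ yields $\alpha\w(\phi_j\mu)\to\alpha\w(\phi\mu)$, that is $\phi_j R\w dz\to\phi R\w dz$, and stripping off the nonvanishing factor $dz$ gives $\phi_j R\to\phi R$.

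The main obstacle will be the last step: justifying that wedging with the fixed almost semi-meromorphic $\alpha$ commutes with the limit. One cannot simply multiply arbitrary \pm currents, so the argument must proceed by working on $\U\setminus W$, where $\alpha$ is smooth and the products are honest, establishing convergence there, and then invoking that both $\alpha\w(\phi_j\mu)$ and the candidate limit $\alpha\w(\phi\mu)$ have the SEP with respect to $Z$ (by the final assertion of Section~\ref{pm}) so that their agreement off the codimension-$\ge 2$ set $W$ propagates to all of $\U$. Care is needed that the convergence off $W$ is strong enough; this is where the explicit Coleff-Herrera structure of $\mu$ and the uniform convergence of the coefficients $b^{(j)}_\gamma$ do the real work.
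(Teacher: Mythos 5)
Your overall architecture coincides with the paper's: identify the coefficients $b^{(j)}_\gamma$ of $\phi_j\mu$ in the basis $dz\w\dbar(dw/w^{\gamma+\1})$ with values of operators in $\N_X$, deduce their uniform convergence from the Cauchy hypothesis, identify the limit with $\phi\mu$ by the SEP off the non-Cohen--Macaulay locus $W$, and then pass to $R=\alpha\mu$. The first three steps are sound. The genuine gap is the last step. You assert that multiplication by a fixed almost semi-meromorphic current is continuous on the relevant pseudomeromorphic currents, and propose to prove it by establishing convergence on $\U\setminus W$ and then ``using the SEP to fill in across'' $W$. This does not work: the SEP determines a \emph{single} pseudomeromorphic current from its restriction to $\U\setminus W$, but it does not let you pass a limit across $W$. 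Knowing that each $\alpha\w(\phi_j\mu)$ and the candidate $\alpha\w(\phi\mu)$ satisfy $\1_W(\cdot)=0$, together with convergence on $\U\setminus W$, gives no control near $W$: the regularization implicit in $\1_WT=0$ is not uniform in $j$, so you have not shown that the sequence $\alpha\w(\phi_j\mu)$ converges at all in a neighbourhood of $W$, let alone to the right limit. Indeed $\tau\mapsto\alpha\w\tau$ is not weakly continuous on pseudomeromorphic currents in general.

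The fix is available from data you already have, and it is what the paper does. Since the $b^{(j)}_\gamma$ are holomorphic and converge uniformly on compacts, they converge in $\E_Z$ with all derivatives, and for smooth $\xi$ one has $\alpha\w(\xi\tau)=\xi\,(\alpha\w\tau)$ (both sides are pseudomeromorphic, agree off the non-smooth locus of $\alpha$, and are annihilated by $\1$ of that locus). Hence
$$
\alpha\w(\phi_j\mu)=\sum_{\gamma\le M}b^{(j)}_\gamma\ \alpha\w\Big(dz\w\dbar\frac{dw}{w^{\gamma+\1}}\Big),
$$
a finite sum of smooth functions times \emph{fixed} currents, and the convergence becomes immediate. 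The paper packages exactly this by introducing the Cohen--Macaulay space $\Ok_{X'}=\Ok/\langle w^{M+\1}\rangle$, using $|\phi|_X=|\phi a|_{X'}$ so that the representatives $\hat\psi_j$ of $a\phi_j$ converge in $\E(\U)$ by the proof of Lemma~\ref{chcase}, and writing $\phi_jR=\alpha\hat\psi_j\hat\mu$ as in \eqref{prutt0}; your $b^{(j)}_\gamma$ are precisely the relevant Taylor coefficients of $\hat\psi_j$. The SEP argument is then reserved for what it can actually deliver, namely the identification of the two fixed currents in \eqref{prutt3}.
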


\begin{proof}  
Choose a multiindex $M$ such that $\I:=\langle w^{M+\1}\rangle \subset \J$. 
As before let  $a$ be an  $F$-valued holomorphic function in $\U$ such that
$\mu=a\hat\mu$, cf.~\eqref{grobian}.    
Recall that
\begin{equation}\label{mor}
|\phi |_X=|\phi a|_{X'},
\end{equation}
where $\Ok_{X'}=\Ok/\I$. Define the $F$-valued $\Ok_{X'}$-functions $\psi_j=a\phi_j$.
It follows from the hypothesis and \eqref{mor} that $\psi_j$ is a  Cauchy sequence
with respect to $|\cdot|_{X'}$. Since $\Ok_{X'}$ is Cohen-Macaulay it follows from the proof of 
Lemma~\ref{chcase} that there is $\psi$ in $\Ok_{X'}$ and  
representatives $\hat\psi_j$ and $\hat\psi$ in $\U$ such that $\hat\psi_j\to \hat\psi$
in $\E(\U)$. 
Let $\Phi_j$ be representatives of $\phi_j$ in $\U$.
By Proposition~\ref{goliat}  and \eqref{commute} we have 
\begin{equation}\label{prutt0}
\phi_j R=\Phi_j R=\Phi_j \alpha \mu = \Phi_j \alpha a\hat\mu=\alpha \Phi_j a  \hat\mu=
\alpha (\Phi_j a)  \hat\mu=\alpha\hat\psi_j\hat\mu,  
\end{equation}
where the fifth equality holds since both $\Phi_j$ and $a$ are holomorphic, and the last 
equality holds since both $\Phi_j a$ and $\hat\psi_j$ are representatives in $\U$
of the class $\psi_j$ in $\Ok_{X'}$.  Since $\hat\psi_j\to \hat\psi$ in $\E(\U)$, 
$\alpha \hat\psi_j  \hat\mu\to \alpha \hat\psi  \hat\mu=\alpha \psi   \hat\mu$.
By \eqref{prutt0} thus 
\begin{equation}\label{prutt1}
\phi_j R\to \alpha \psi   \hat\mu.
\end{equation}
Let $\Phi$ be a representative in $\U$ of $\phi$. Since $\Phi$, $\alpha$ and $a$  are almost semi-meromorphic in $\U$,
 by \eqref{commute}, 
\begin{equation}\label{prutt2}
\phi R=\Phi R=\Phi \alpha\mu=\alpha \Phi \mu=\alpha \Phi a\hat \mu.
\end{equation}
We claim that
\begin{equation}\label{prutt3}
\alpha\Phi a \hat\mu =\alpha(\Phi a) \hat\mu.
\end{equation}
In fact, both $\Phi$ and $\alpha$ are almost semi-meromorphic in $\U$ and smooth
in a \nbh of each point on $Z$ where $\Ok_X$ is Cohen-Macaulay, cf.~Lemma~\ref{chcase}
and Proposition~\ref{goliat}.
Therefore \eqref{prutt3} holds in $\U\setminus W$, where $W\subset Z$ has positive codimension
in $Z$.  Both sides of \eqref{prutt3} have the SEP with respect to
$Z$, see Section~\ref{pm},  so \eqref{prutt3} holds everywhere.  
The right hand side of \eqref{prutt3} is equal to $\alpha\psi\hat\mu$, and so  Lemma~\ref{partout}
follows from \eqref{prutt1}, \eqref{prutt2}, and \eqref{prutt3}.
\end{proof}

Since $\phi_j$ are holomorphic, we have by Theorem~\ref{strong} and  Lemma~\ref{partout} that
$
0=\nabla_f(\phi_jR)\to \nabla_f(\phi R),
$
and hence $\phi$ is holomorphic in view of Theorem~\ref{strong}.  Now take $\Lk$  in $\N_X$.
By the hypothesis and definition of $|\cdot|_X$, $\Lk\phi_j$ is a holomorphic Cauchy sequence so it converges to a holomorphic
limit $H$. On the other hand we know that $\Lk\phi_j\to \Lk\phi$ where $\Ok_X$ is Cohen-Macaulay.
Thus $\Lk\phi_j\to \Lk\phi$ uniformly.  We conclude that $|\phi_j-\phi|_X\to 0$ uniformly.
Thus Theorem~\ref{thmB} is proved
when $Z$ is smooth. 
\end{proof}

\section{Resolution of $X$}\label{reso}
Assume that our $X$ of pure dimension $n$ is embedded in the smooth manifold $Y$ 
of dimension $N$ as before, and let $Z$ denote
the underlying reduced space. There exists a 
modification $\pi\colon Y'\to Y$ that is a biholomorphism $Y'\setminus \pi^{-1}Z_{sing}\simeq Y\setminus Z_{sing}$
and such that the strict transform $Z'$ of $Z$ is smooth and the restriction 
of $\pi$ to $Z'$ is a modification of $Z$.  Such a $\pi$ is called a strong resolution. 
Let $\widetilde\J$ be the ideal sheaf on $Y'$ generated by pullbacks of generators of $\J$ and consider the
relative gap sheaf $\J'=\widetilde\J [\pi^{-1}Z_{sing}]$, which is coherent, cf.\ \cite[Theorem 2]{ST}.   In fact,
one obtains $\J'$ by extending $\widetilde\J$ so that 
one gets rid of all primary components corresponding to the exceptional
divisor, and also possible embedded primary ideals in $Z'\cap \pi^{-1}Z_{sing}$.  
Thus  $\J'$
is the smallest coherent sheaf of pure dimension $n$ that contains $\widetilde \J$ and such that
$\Ok_{Y'}/\J'$ has support on $Z'$.  We let $X'$ denote the analytic space with structure sheaf
$\Ok_{X'}= \Ok_{Y'}/\J'$.   Notice that we have the induced mapping
\begin{equation}\label{pellets}
p^*\colon \Ok_{X}\to \Ok_{X'}.
\end{equation}
In fact, if $\phi\in \J$, then $\pi^*\phi\in\tilde\J\subset\J'$ so that $p^*$ in \eqref{pellets} is well-defined.
We say that $p\colon X'\to X$ is a resolution of $X$.  Notice that $p^*$ extends to map meromorphic 
functions on $X$ to meromorphic functions on $X'$.


\begin{lma}\label{gamas2}
Assume that $\phi'$ is meromorphic on $X'$ and holomorphic on $X'\setminus V$. 
Then there is a unique meromorphic $\phi$ on $X$, holomorphic in $X\setminus Z_{sing}$,  such that
$\phi'=p^*\phi$. 
\end{lma}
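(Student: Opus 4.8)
The plan is to construct $\phi$ first on the regular part and then extend it meromorphically across $Z_{sing}$, exploiting that $p$ is a biholomorphism away from the singular locus and that $Z'$ is smooth. Recall that $\pi$ is a biholomorphism over $Y\setminus Z_{sing}$, where moreover $\J'=\widetilde\J=\pi^*\J$; hence $p$ restricts to a biholomorphism $X'\setminus p^{-1}(Z_{sing})\to X\setminus Z_{sing}$. Since $V$ lies over $Z_{sing}$, i.e.\ $V\subseteq p^{-1}(Z_{sing})$, the function $\phi'$ is in particular holomorphic on $X'\setminus p^{-1}(Z_{sing})$, and I would \emph{define} $\phi$ on $X\setminus Z_{sing}$ as the transport of $\phi'$ under this biholomorphism; it is holomorphic there. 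Uniqueness is then immediate: a meromorphic function on $X$ is determined by its restriction to the dense Zariski-open $X\setminus Z_{sing}$ (because $\Ok_X$ has pure dimension $n$, so a non-nilpotent denominator is a non-zerodivisor and $X\setminus Z_{sing}$ meets every irreducible component of $Z$), so any two descents must agree.

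The heart of the argument is to show that this holomorphic $\phi$ extends to a meromorphic function on $X$. I would work locally near a point $x_0\in Z_{sing}$, where $p$ is proper with compact fibre $p^{-1}(x_0)$. Choose $h\in\Ok_Z$ non-nilpotent and vanishing on $Z_{sing}$ near $x_0$ (possible since $Z_{sing}$ meets each component of $Z$ in a proper subvariety). The polar set of the meromorphic $\phi'$ is contained in $V\subseteq p^{-1}(Z_{sing})$, on which $p^*h$ vanishes; using the Nullstellensatz on the compact fibre to obtain a uniform order, for $k$ large the product $g':=p^*(h^k)\,\phi'$ is \emph{holomorphic} on $p^{-1}(U)$, that is $g'\in\Ok_{X'}$. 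On $X'\setminus p^{-1}(Z_{sing})$ we have $g'=p^*(h^k\phi)$, so $g'$ is the pullback of the holomorphic function $g:=h^k\phi$ on $X\setminus Z_{sing}$.

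To descend $g'$ I would use that $p_*\Ok_{X'}$ is a coherent $\Ok_X$-module (Grauert, $p$ proper) which is torsion-free over $\Ok_X$: if $s'\in\Ok_{X'}$ is killed by a non-nilpotent $h_0\in\Ok_X$, then $p^*h_0$ is non-nilpotent on the pure-dimensional $X'$, hence a non-zerodivisor there, and annihilates $s'$, forcing $s'=0$. A torsion-free coherent module of generic rank one that agrees with $\Ok_X$ over the dense $X\setminus Z_{sing}$ embeds into the sheaf of meromorphic functions on $X$; hence the section $g'$ of $p_*\Ok_{X'}$ corresponds to a meromorphic function on $X$, holomorphic on $X\setminus Z_{sing}$ and restricting there to $g$. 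Setting $\phi:=g/h^k$ then gives a meromorphic function on $X$, holomorphic off $Z_{sing}$, extending the $\phi$ already defined. Finally, $p^*\phi$ and $\phi'$ are both meromorphic on $X'$ and coincide on the dense set $X'\setminus p^{-1}(Z_{sing})$; since $Z'$ is smooth, the determinacy used for uniqueness (now applied on $X'$) gives $p^*\phi=\phi'$ on all of $X'$.

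The main obstacle I anticipate is the pole-clearing step together with the descent: one must verify that a single pulled-back power of the ideal of $Z_{sing}$ uniformly clears the poles of $\phi'$ along the entire locus over $x_0$ (this is where properness of the fibre and the meromorphy of $\phi'$ on the resolution are genuinely used), and that $p_*\Ok_{X'}$ really sits inside the meromorphic functions on the \emph{non-reduced} $X$. Both reduce to torsion-freeness and purity. An alternative more in the spirit of the paper is to replace this algebra by pushing forward the structure form $\omega_{X'}^{n}$: one checks that $p_*(\phi'\,\omega_{X'}^{n})$ is pseudomeromorphic with the SEP with respect to $Z$, equals $\phi\,\omega_X^{n}$ off $Z_{sing}$, and is therefore of the form $\psi\,\omega_X^{n}$ for a meromorphic $\psi$ on $X$, which is the desired $\phi$.
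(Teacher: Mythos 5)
Your proof is correct and follows essentially the same route as the paper: Grauert's theorem makes the direct image $p_*\Ok_{X'}$ a coherent $\Ok_X$-module whose quotient by $\Ok_X$ is supported on $Z_{sing}$, so by the Nullstellensatz a power $h^\nu$ of a function vanishing on $Z_{sing}$ clears denominators and exhibits the descended function as $g/h^\nu$. The only differences are cosmetic: you additionally spell out the preliminary pole-clearing of $\phi'$ along $V$ and the uniqueness via density of $X\setminus Z_{sing}$, both of which the paper's proof leaves implicit.
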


\begin{proof} 
Since $\pi$ is proper it follows from Grauert's theorem
that  the direct image $\F=\pi_* (\Ok_{Y'}/\J')$ is coherent,  and clearly it coincides with
$\Ok_Y\J$ outside $Z_{sing}\subset Y$.  Moreover, it contains $\Ok_X=\Ok_Y/\J$ since 
$\pi_* \pi^{-1}\phi=\phi$ for $\phi$ in $\Ok_X$.  Thus $\F/\Ok_X$ has support on $Z_{sing}$.
Let $h$ be a function that vanishes on $Z_{sing}$ but not identically in $Z$. Then 
$h^\nu \F/\Ok_X=0$ if $\nu$ is large enough. If $\phi'$ is a section of
$\Ok_{X'}$, therefore $g:=h^\nu \pi_*\phi'$ is holomorphic. Thus $\phi: =g/h^\nu$ is meromorphic
and $\phi'=p^*\phi$.
\end{proof}

\begin{lma}\label{sork}
Let $\mu$ be a tuple of currents that generate the $\Ok_X$-module  
$\Homs(\Ok_X,\CH_Y^Z)$.   

\smallskip
\noindent (i):   There is a unique tuple $\mu'$ of $(N,N-n)$-currents in $Y'$ with support on $Z'$ and the SEP
such that $\pi_*\mu'=\mu$. 

\smallskip
\noindent
 (ii)  A holomorphic function $\Phi'$ defined in a \nbh in $Y'$ of
a point on $Z'$ is in $\J'$ if and only if $\Phi' \mu'=0$. 
\end{lma}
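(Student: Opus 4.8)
The plan is to build $\mu'$ by transporting $\mu$ through the biholomorphism $\pi\colon Y'\setminus E\to Y\setminus Z_{sing}$, where $E:=\pi^{-1}Z_{sing}$, and then extending it across $E$; uniqueness and part~(ii) will then follow rather formally from the SEP together with the duality principle \eqref{dual}. I would dispose of uniqueness first. If $\mu_1'$ and $\mu_2'$ both have support on $Z'$, the SEP with respect to $Z'$, and $\pi_*\mu_i'=\mu$, then $\nu:=\mu_1'-\mu_2'$ is pseudomeromorphic with $\pi_*\nu=0$. Since $\pi$ is biholomorphic over $Y\setminus Z_{sing}$, $\nu$ vanishes on $Y'\setminus E$, so $\supp\nu\subset E\cap Z'$. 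As $\pi|_{Z'}$ is a modification of $Z$, the set $E\cap Z'$ maps into $Z_{sing}$ and hence has positive codimension in $Z'$; the SEP then forces $\nu=0$.

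For existence I would work locally near a point of $Z'$ and glue by uniqueness. Over $Y\setminus Z_{sing}$ the equation $\pi_*\mu'=\mu$ pins $\mu'$ down as the transform of $\mu$ under the inverse biholomorphism, a Coleff--Herrera current supported on $Z'\setminus E$. To extend across $E$ I would, by Lemma~\ref{raket}, represent $\mu$ locally on $Y$ through a complete intersection $Z_f\supset Z$, so that each component is $a_k\,\dbar(1/f^{M+\1})\w d\zeta\w d\eta$ with $a_k$ holomorphic. Pulling $f$ and the $a_k$ back to $Y'$ and passing to local coordinates $(z',w')$ with $Z'=\{w'=0\}$, I would extract the coefficients of the transform as in Section~\ref{loko}. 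The key point, exactly as in the computation \eqref{svin1}--\eqref{plutt1} behind Proposition~\ref{snabel2}, is that these coefficients are holomorphic functions divided by powers of the Jacobian factor $h'=\det\big(\partial(\pi^*f)/\partial(\text{transversal})\big)$, hence meromorphic on the whole chart. Writing $\mu'=\sum_\alpha\tau_\alpha\,dz'\otimes\dbar\frac{dw'}{(w')^{\alpha+\1}}$ with these meromorphic $\tau_\alpha$, the current has support on $Z'$; and since meromorphic (principal value) coefficients carry no mass on subvarieties of positive codimension, $\mu'$ has the SEP with respect to $Z'$. Finally $\pi_*\mu'$ is pseudomeromorphic with support on $Z$, and by the commutation $\1_W\pi_*\mu'=\pi_*\1_{\pi^{-1}W}\mu'$ together with the SEP of $\mu'$ it has the SEP with respect to $Z$; as it agrees with $\mu$ off the positive-codimension set $Z_{sing}$, it must equal $\mu$.

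For part~(ii) I would transport the duality principle to $Y'$. On $Y'\setminus E$ the tuple $\mu'$ corresponds to $\mu$ and $\widetilde\J$ to $\pi^*\J$, so for a germ $\Phi'$ at a point of $Z'$ the relation $\Phi'\mu'=0$ holds on $Y'\setminus E$ if and only if the transported function lies in $\J$ there, that is, if and only if $\Phi'\in\widetilde\J$ on $Y'\setminus E$, by \eqref{dual}. If $\Phi'\in\J'$, then $\Phi'\in\widetilde\J$ generically along $Z'$ (the gap sheaf agrees with $\widetilde\J$ off $E$), so $\Phi'\mu'=0$ off $E\cap Z'$; since $\Phi'\mu'$ is pseudomeromorphic with the SEP with respect to $Z'$ (a holomorphic function times $\mu'$), it vanishes identically. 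Conversely, if $\Phi'\mu'=0$, then $\Phi'\in\widetilde\J$ on $Y'\setminus E$, and by the definition of the relative gap sheaf $\J'=\widetilde\J[\pi^{-1}Z_{sing}]$ this forces $\Phi'\in\J'$.

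The main obstacle is the extension step in the existence part: a priori the transform of $\mu$ lives only on $Y'\setminus E$, and there is no general mechanism to extend a pseudomeromorphic current across $E$. What rescues the argument is the explicit meromorphic shape of the coefficients produced by the complete-intersection representation of Lemma~\ref{raket} and the derivative identities behind Proposition~\ref{snabel2}; this single device simultaneously furnishes a pseudomeromorphic extension, its support on $Z'$, and the SEP, after which everything else is bookkeeping with the SEP and the duality principle.
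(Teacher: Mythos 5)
Your uniqueness argument and your proof of part (ii) are essentially the ones in the paper (SEP forces any two candidates agreeing off $V=E\cap Z'$ to coincide; membership in the gap sheaf $\J'$ is tested generically on $Z'$, transported to $Y\setminus Z_{sing}$, and settled by the duality principle \eqref{dual} plus the SEP of $\mu'$). For existence, however, you take a genuinely different route. The paper invokes \cite[Theorem~2.15]{AW3}, which produces a pseudomeromorphic $T$ on $Y'$ with $\pi_*T=\mu$, and then simply sets $\mu':=\1_{Z'\setminus V}T$; support on $Z'$, the SEP, and $\pi_*\mu'=\mu$ then all drop out of the calculus \eqref{kamrat} with no local computation at all. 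You instead build $\mu'$ by hand, writing the transform of $\mu$ in the form \eqref{pelargon} and extending its coefficients meromorphically across $V$. That can be made to work, but be aware that the crux of your route is exactly the step you only assert by analogy with Proposition~\ref{snabel2}: you must show (a) that only finitely many multiindices $\alpha$ occur uniformly near a point of $V$ (this follows from the Nullstellensatz, since $(\J_{Z'}\J_E)^s\subset\widetilde\J$ locally, so $(w')^s\mu'=0$ off $E$), and (b) that $(h')^r\tau_\alpha$ extends holomorphically across $V$, which requires transplanting the identities \eqref{svin2}--\eqref{plutt1} to $Y'$, where $\pi^*f$ is no longer a complete intersection but factors as $Aw'$ with $\det A$ vanishing on $V$. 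So your plan is sound and self-contained in spirit, but it trades the paper's one-line appeal to the direct-image theorem for a nontrivial computation that you have sketched rather than carried out.
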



In view of (ii) thus $\phi' \mu'$ is well-defined for $\phi'$ in $\Ok_{X'}$.   It is not
necessarily true that $\mu'$ is $\dbar$-closed. 
Let $p_0=\pi|_{Z'}$ so that 
\begin{equation}\label{vdef}
V:=p_0^{-1}Z_{sing}=\pi^{-1}Z_{sing}\cap Z'.
\end{equation} 
Since $\pi$ is a biholomorphism outside $\pi^{-1}Z_{sing}$ it follows
however that $\dbar\mu=0$ there.  In the literature such a $\mu'$ is often said to be a Coleff-Herrera current with
poles at $V\subset Z'$.  If $h'$ is holomorphic and vanishes
to enough order on $V$ then $0=h'\dbar \mu'=\dbar(h'\mu')$, and hence
$h'\mu'$ is in $\Homs(\Ok_{X'}, \CH_{Y'}^{Z'})$.

\begin{proof}
Recall that $\mu$ is pseudomeromorhic, cf.~Section~\ref{pm}.
By \cite[Theorem~2.15]{AW3} there is a current $T$ in $Y'$ such that  $\pi_* T=\mu$.   
Since $\pi$ is a biholomorphism outside $\pi^{-1}Z_{sing}$ the current $T$ must be unique there,
in particular it must have support on $\pi^{-1} Z$, and the SEP on $Z'\setminus\pi^{-1}Z_{sing}$.
If $W\subset Z'$ has positive codimension, therefore $\1_{W\setminus V} T= 0$.
 Let $\mu'=\1_{Z'\setminus V}T$. It follows from \eqref{kamrat} that
 $ \1_W\mu'=\1_{(Z'\setminus V)\cap W}T= \1_{W\setminus V}T=0$.
Thus $\mu'$ has support on $Z'$ and the SEP. Moreover,
 $\pi_*\mu'=\mu$ outside $Z_{sing}$ and hence everywhere by the SEP.
%
%

Since $\J'$ has no embedded components, $\Phi'$ is in $\J'$ if and only if $\Phi'$ is in
$\J'$ on $Z'\setminus V'$. 
This in turn holds if and only if $\Phi=\pi_*\Phi'$ belongs to
$\J$ on $Z_{reg}$ which holds if and only if $\Phi\mu=0$ on $Z_{reg}$. However this holds
if and only if $\Phi'\mu'=0$ on $Z'\setminus V$ which by the SEP of $\mu'$ holds if and only if
$\Phi'\mu'=0$ on $Z'$.
\end{proof}

Let $R$ be a current in $Y$ with support on $Z$ and the SEP as in Section~\ref{proofB}. 
Recall, Proposition~\ref{goliat},  that there is an almost semi-meromorphic current $\alpha$ in $Y$ 
such that
$R=\alpha\mu$ where $\mu$ is a tuple of Coleff-Herrera currents that generate
$\Homs(\Ok_X,\CH_Y^Z)$.

\begin{lma}\label{godis}
There is an almost semi-meromorphic current $\alpha'$ in $Y'$ such that $R'=\alpha' \mu'$
and $\pi_* R'=R$.
\end{lma}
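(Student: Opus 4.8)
The plan is to take $\alpha'$ to be the pullback of $\alpha$ under $\pi$. Since $\alpha$ is almost semi-meromorphic on $Y$ and $\pi\colon Y'\to Y$ is a modification, hence surjective, the image of $\pi$ is not contained in the positive-codimension set where $\alpha$ fails to be smooth; thus the pullback $\pi^*\alpha$ is a well-defined almost semi-meromorphic current on $Y'$, see \cite{AW3}. (Concretely, writing $\alpha=(\pi_0)_*\tilde\alpha$ with $\pi_0$ a modification and $\tilde\alpha$ semi-meromorphic, one pulls back over a common modification of $\pi$ and $\pi_0$ and uses that a semi-meromorphic current pulls back to a semi-meromorphic current under a modification.) I set $\alpha'=\pi^*\alpha$ and define $R'=\alpha'\mu'$; this product is well-defined by Section~\ref{pm}, since $\alpha'$ is almost semi-meromorphic and $\mu'$ is pseudomeromorphic.

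Next I would record that $R'$ has the SEP with respect to $Z'$: indeed $\mu'$ has support on $Z'$ and the SEP by Lemma~\ref{sork}, and multiplying a current with the SEP by an almost semi-meromorphic current preserves the SEP, cf.~\eqref{commute2}. From this I would deduce that $\pi_*R'$ has the SEP with respect to $Z$. If $W\subset Z$ has positive codimension then, using that the $\1$-operations commute with proper pushforward (see \cite{AW3}), $\1_W\pi_*R'=\pi_*(\1_{\pi^{-1}W}R')$. Since $\pi|_{Z'}$ is a modification of $Z$ that is biholomorphic outside $Z_{sing}$, the set $(\pi|_{Z'})^{-1}W$ is contained in the union of the closure of $(\pi|_{Z'})^{-1}(W\setminus Z_{sing})$ and the exceptional set $V$ of \eqref{vdef}, both of positive codimension in $Z'$; hence $\1_{\pi^{-1}W}R'=0$ by the SEP of $R'$, and so $\1_W\pi_*R'=0$.

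It then remains to identify $\pi_*R'$ with $R$. Away from $Z_{sing}$ the map $\pi$ is a biholomorphism, so $\pi^*\alpha$ and $\mu'$ are the honest pullbacks of $\alpha$ and $\mu$, and the projection formula for a biholomorphism gives $\pi_*R'=\pi_*(\pi^*\alpha\w\mu')=\alpha\w\pi_*\mu'=\alpha\mu=R$ there, using $\pi_*\mu'=\mu$ from Lemma~\ref{sork} and $R=\alpha\mu$ from Proposition~\ref{goliat}. Thus $R-\pi_*R'$ is a pseudomeromorphic current supported on $Z_{sing}$, which has positive codimension in $Z$. Since $R$ has the SEP (Section~\ref{proofB}) and $\pi_*R'$ has the SEP by the previous step, their difference has the SEP, and a current with the SEP supported on a set of positive codimension vanishes; hence $\pi_*R'=R$.

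The main obstacle will be the global gluing rather than the local computation: outside $Z_{sing}$ everything is forced by the biholomorphism, so the real content is controlling the currents across the exceptional locus. The two delicate points are (a) that $\pi^*\alpha$ exists as an almost semi-meromorphic current so that the regularized product $\alpha'\mu'$ is unambiguous near $V$, and (b) that $\pi_*R'$ inherits the SEP, which (via \eqref{kamrat} and the commutation of $\1$ with pushforward) is exactly what lets the identity established on $Y\setminus Z_{sing}$ propagate across $Z_{sing}$.
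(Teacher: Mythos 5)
Your proposal is correct and follows essentially the same route as the paper: construct $\alpha'$ on $Y'$ from $\alpha$ via a common modification (your $\pi^*\alpha$ agrees with the paper's $\alpha'$ satisfying $\pi_*\alpha'=\alpha$), and then use the SEP of pseudomeromorphic currents to propagate the identity, which is forced where $\pi$ is a biholomorphism, across the exceptional locus. The only cosmetic difference is the direction of the argument — you define $R':=\alpha'\mu'$ and verify $\pi_*R'=R$ by the SEP, while the paper verifies $R'=\alpha'\mu'$ by the same SEP mechanism — and your filled-in details (commutation of $\1_W$ with proper pushforward, positive codimension of $V$ and of $\pi^{-1}W\cap Z'$) are all consistent with the cited properties from \cite{AW3}.
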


\begin{proof} 
There is 
a modification $\tau\colon V\to Y$ such that $\alpha=\tau_* \gamma$, where $\gamma$ is
semi-meromorphic. There is a modification $V'\to Y$ that factors over both $V$ and $Y'$. 
Thus we get an almost semi-meromorphic $\alpha'$ in $Y'$ such that $\pi_*\alpha'=\alpha$.  
It follows that $R'=\alpha'\mu'$ since this holds outside $V$, cf.~\eqref{vdef}, and both currents have the SEP,
cf.~\eqref{commute2}. 
 \end{proof}

\section{Proof of Theorem~\ref{thmB}}\label{proofb}

\begin{lma}  \label{pellets2}
Assume that $Z$ is smooth and that $\Lk$ is a holomorphic differential operator on $X$ that belongs to
$\N_X$ in $Z\setminus W$, where $W$ has positive codimension. If $Z(h)\supset W$, then
$h^r\Lk$ is in $\N_X$ for large enough $r$. 
\end{lma}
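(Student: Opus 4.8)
The plan is to realize $\N_X$ and the given operator $\Lk$ as sections of one and the same coherent $\Ok_Z$-module, and then to kill the obstruction to $\Lk\in\N_X$ by a power of $h$ via the Nullstellensatz. Since the assertion is local I fix the point, local coordinates $(z,w)$, and a multiindex $M$ with $\langle w^{M+\1}\rangle\subset\J$. Let $B\ge|M|$ be an integer bounding the order of $\Lk$ as well. The first step is to introduce the $\Ok_Z$-module $\Pop_B$ of Noetherian differential operators on $X$ of order at most $B$, valued in $\Ok_Z$. I would obtain $\Pop_B$ as follows: the order-$\le B$ holomorphic differential operators in $\U$, with coefficients restricted to $Z$, form a free $\Ok_Z$-module $\F$ of finite rank; the Noetherian ones form the submodule cut out by the conditions $L\Phi|_Z=0$ for all $\Phi$ in $\J$, which are $\Ok_Z$-linear in the coefficients of $L$, and since the local rings of $\Ok_Z$ are Noetherian this submodule is coherent. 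Dividing out the operators $L$ with $L\Phi|_Z=0$ for all $\Phi$ in $\Ok_\U$ (those inducing the zero map on $\Ok_X$) then yields the coherent $\Ok_Z$-module $\Pop_B$. By construction $\N_X\subset\Pop_B$, and by Theorem~\ref{thmA}~(ii) it is a coherent submodule, so $\Q:=\Pop_B/\N_X$ is coherent.

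Next I would place $\Lk$ into this picture. By hypothesis $\Lk$ is an order-$\le B$ holomorphic differential operator on $X$, hence a section of $\Pop_B$, and it belongs to $\N_X$ over the dense open set $Z\setminus W$. Therefore its class $[\Lk]\in\Q$ restricts to $0$ on $Z\setminus W$, and the coherent subsheaf $\Ok_Z\cdot[\Lk]\subset\Q$ is supported on $W$.

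Finally, let $\ann([\Lk])\subset\Ok_Z$ be the annihilator of $[\Lk]$; it is a coherent ideal whose zero set is $\supp([\Lk])\subset W\subset Z(h)$. By the Nullstellensatz $h$ lies in the radical of $\ann([\Lk])$, so $h^r\in\ann([\Lk])$ for some $r$; that is, $h^r[\Lk]=0$ in $\Q$, which by construction of $\Q$ means precisely that $h^r\Lk\in\N_X$ in a \nbh of the chosen point (and one may take $r$ uniform on a relatively compact subneighbourhood).

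The routine parts are the membership $\Lk\in\Pop_B$ and the concluding Nullstellensatz step. The main obstacle I expect is the coherence bookkeeping: justifying that bounded-order Noetherian operators on $X$ form a coherent $\Ok_Z$-sheaf $\Pop_B$ and that $\N_X$ sits inside it as a coherent subsheaf, so that the passage to $\Q=\Pop_B/\N_X$ makes the equivalence ``$h^r\Lk\in\N_X$ iff $h^r[\Lk]=0$'' a tautology. The point is that working with honest operators (rather than coefficient tuples) avoids any faithfulness issue. If one instead prefers to argue inside the free module of coefficient tuples used in the proof of Theorem~\ref{thmA}, where $\N_X$ is already realized as a coherent submodule and $\Lk$ as a tuple, then one must additionally check that this realization is faithful, which one sees by testing operators against a monomial basis of $\Ok_X$ over $\Ok_Z$ on the Cohen--Macaulay locus of $X$, whose complement has codimension $\ge 2$ and where the representation \eqref{basic2} is unique.
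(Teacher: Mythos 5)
Your argument is correct and is essentially the paper's proof: both realize $\Lk$ and $\N_X$ inside a common coherent $\Ok_Z$-module, observe that the coherent quotient $\langle \N_X,\Lk\rangle/\N_X$ is supported on $W$, and kill it with the Nullstellensatz. The paper dispenses with your auxiliary sheaf $\Pop_B$ and works directly with the coefficient-tuple realization $\N_X\subset\Ok_Z^\nu$ (whose faithfulness is the uniqueness of the expansion in monomial derivatives noted in the proof of Theorem~\ref{thmA}), but the mechanism is identical.
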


\begin{proof}  
Recall that  the sheaf $\N_X$  locally can be considered 
as a coherent submodule of $\Ok_Z^\nu$ for some large $\nu$.
If  $\Lk$ is not in $\N_X$, then $\M'=\langle \J,\Lk\rangle /\N_X$ is a coherent sheaf with support on $W$. 
By the Nullstellensatz  $h^r\M'=0$ for  large enough $r$.   Thus  $h^r\Lk\in\N_X$ for such $r$.
\end{proof}

In a \nbh $\U\subset\subset X$ of a given point $x\in Z_{sing}$ we have a coherent extension of $N_X$,
cf.~Theorem~\ref{oxet},  that we denote by
$\N_X$ as well.
It is enough to prove Theorem~\ref{thmB} in $\U$. 
Let $\Lk_1,\ldots,\Lk_m$ be generators for the $N_X$ in $\U$, 
and let  $\U'=\pi^{-1}\U$.

\begin{lma}\label{morot}
There are meromorphic differential operators $\Lk'_j$  in $X'\cap \U'$ with poles
on  $V=p_0^{-1}Z_{sing}$ such that 
$
\Lk'_j(p^*\phi)= p_0^*(\Lk_j\phi)
$
on $\U'\cap Z'\setminus V$.  
If $h$ is a holomorphic in $\U$, vanishes
to high order on $Z_{sing}$,  and $h'=\pi^*h$, then $h'\Lk_j$ are in $\N_{X'}$. 
\end{lma}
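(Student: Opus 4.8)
The plan is to build the $\Lk'_j$ by pulling the ambient Noetherian differential operators that induce the $\Lk_j$ back through the modification $\pi$, and to observe that such a pullback is automatically a \emph{meromorphic} differential operator whose poles lie along the exceptional locus. By Theorem~\ref{oxet} the generators $\Lk_1,\ldots,\Lk_m$ of (the coherent extension of) $\N_X$ in $\U$ are induced by genuine holomorphic differential operators $L_1,\ldots,L_m$ in the ambient neighbourhood, each Noetherian with respect to $\J$. Since $\pi\colon Y'\to Y$ is a biholomorphism outside $E:=\pi^{-1}Z_{sing}$, and $V=E\cap Z'$, the transported operators will be holomorphic off $V$ and, by the intrinsic character of $\N$ (Theorem~\ref{thmA}), will lie in $\N_{X'}$ there; the whole issue is their behaviour across $V$.

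First I would make the pullback explicit. In local coordinates $\zeta$ on $Y'$ with $\pi$ given by $z=\pi(\zeta)$, each $\partial/\partial z_i$ pulls back to a combination $\sum_k (J^{-1})_{ki}\,\partial/\partial\zeta_k$, where $J=\partial\pi/\partial\zeta$. As $J^{-1}=\mathrm{adj}(J)/\det J$ has holomorphic numerator while $\det J$ vanishes precisely on the critical locus of $\pi$, which is contained in $E$, the pullback $\wt L_j:=\pi^*L_j$ is a meromorphic differential operator on $\U'$ with poles contained in $E$ of bounded order; composing with the pullbacks of the holomorphic coefficients of $L_j$ keeps the poles on $E$. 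The chain rule gives $\pi^*(L_j\Phi)=\wt L_j(\pi^*\Phi)$ off $E$, so for $\phi\in\Ok_X$ with ambient representative $\Phi$ we get, on $Z'\setminus V$,
\begin{equation*}
\wt L_j(\pi^*\Phi)|_{Z'}=\pi^*\big((L_j\Phi)|_Z\big)=p_0^*(\Lk_j\phi),
\end{equation*}
and since $\pi^*\Phi$ represents $p^*\phi$, this is the operator $\Lk'_j$ with the asserted identity. To see that $\Lk'_j$ is a well-defined meromorphic operator on \emph{all} of $\Ok_{X'}$, I would check (using the pole clearing below) that some power $h'^r\wt L_j$ is holomorphic and annihilates $\J'$ along $Z'$; then $\wt L_j\Phi'|_{Z'}=h'^{-r}(h'^r\wt L_j)\Phi'|_{Z'}=0$ for $\Phi'\in\J'$, so $\Lk'_j$ is independent of the representative.

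For the second assertion, choose $h$ vanishing to sufficiently high order on $Z_{sing}$; then $h'=\pi^*h$ vanishes to high order along every component of $E$, so for a suitable power $h'^r\wt L_j$ is a genuine holomorphic differential operator on $\U'$. It is Noetherian with respect to $\J'$: for $\Phi'\in\J'$ the holomorphic function $(h'^r\wt L_j)\Phi'|_{Z'}$ vanishes on the dense subset $Z'\setminus V$ (there $\pi$ is biholomorphic and $L_j$ is Noetherian with respect to $\J$), hence vanishes on all of $Z'$. Thus $h'^r\wt L_j$ induces a holomorphic Noetherian operator $h'^r\Lk'_j$ on $X'$ which, off $V$, lies in $\N_{X'}$, since $\Lk'_j\in\N_{X'}$ there and $\N_{X'}$ is an $\Ok_{Z'}$-module. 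As $Z(h')\supseteq V$ and $V$ has positive codimension in the smooth space $Z'$, Lemma~\ref{pellets2} applied to $X'$ yields $h'^{r'}(h'^r\Lk'_j)\in\N_{X'}$ for large $r'$, i.e.\ $h'^{r+r'}\Lk'_j\in\N_{X'}$. Since $h'^{r+r'}=\pi^*(h^{r+r'})$ and $h^{r+r'}$ again vanishes to high order on $Z_{sing}$, this is precisely the statement that $h'\Lk'_j\in\N_{X'}$ once $h$ is taken to vanish to high enough order.

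The main obstacle is the meromorphic extension across $V$: bounding the order of the poles of $\wt L_j$ by the exceptional locus $E$, and verifying that $h'^r\wt L_j$ is genuinely Noetherian with respect to the relative gap-sheaf ideal $\J'$ rather than merely with respect to $\wt\J$. This is exactly where one must use the density of $Z'\setminus V$ in $Z'$ together with the absence of embedded components of $\J'$ coming from the construction in Section~\ref{reso}.
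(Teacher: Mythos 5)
Your proposal is correct and follows essentially the same route as the paper's proof: pull back the ambient operators $L_j$ defining $\Lk_j$ to meromorphic differential operators $L_j'$ with poles on $\pi^{-1}Z_{sing}$ (you invert the Jacobian by hand where the paper invokes the formula $\pi^*(L\Phi)=s^{-\nu}\widetilde L(\pi^*\Phi)$ from the discussion preceding \cite[Corollary 4.26]{AW3}), check Noetherianity with respect to $\J'$ by density of $Z'\setminus V$, and observe that $\Lk_j'\in\N_{X'}$ off $V$ since $p$ is a biholomorphism there. The final pole-clearing via Lemma~\ref{pellets2} is exactly the paper's concluding step.
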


Since  $Z'$ is smooth we have the well-defined $\Ok_{X'}$-module $\N_{X'}$ of Noetherian operators
on $X'$.

\begin{proof}
Given a holomorphic differential operator $T$ on $\U$ there is a holomorphic differential operator $\widetilde T$
in $\U'=\pi^{-1}\U$ with values in a power  $N_{Y'/Y}^\nu$ of the relative canonical bundle, and a holomorphic section
$s$ of $N_{Y'/Y}$, vanishing on $\pi^{-1}Z_{sing}$, such that 
$
\pi^*(T\Phi)=s^{-\nu}\widetilde T(\pi^*\Phi).
$
See, e.g., the discussion preceding \cite[Corollary 4.26]{AW3}.  Thus $T'=s^{-\nu}\widetilde T$
is a meromorphic differential operator such that $\pi^*(T\Phi)=T'(\pi^*\Phi)$.

Let $L_j$ be differential operators in $\U$ that define $\Lk_j$ and
let $L_j'$ be meromorphic differential operators in $\pi^{-1}\U$ such that $\pi^*(L_j\Psi)=L_j'(\pi^*\Psi)$.
 If $\Phi'$ is in $\J'$ then $L'\Phi'=0$ on
$Z'\setminus V$ and by continuity also  on $Z'$.  Thus $L'_j$ are Noetherian with respect to $\J'$.
Now let  $\Lk_j'$ be the induced Noetherian operators on $X'$.  
%
Since $p$ is a biholomorphism of non-reduced spaces outside $V$ it follows that
$\Lk'_j$ must belong to $N_{X'}$ there.
 %
%
If $h$ vanishes enough on $Z_{sing}$ thus   
 $h' L_j'$ and hence  $h' \Lk_j'$ are holomorphic. 
After possibly modifying $h$ it follows from Lemma~\ref{pellets2} that $ h' \Lk_j'$ are in $\N_{X'}$.
\end{proof}

\begin{lma}\label{pellets3}
After possibly shrinking $\U\ni x$ there is a holomorphic function $H$ in $\U$, not vanishing identically
on $Z$,  such that 
\begin{equation}\label{pellets4}
|p^*(H\phi)(z')|_{X'}\le C| \phi(\pi(z')) |_X, \quad z'\in Z'\cap \U'. 
\end{equation}
\end{lma}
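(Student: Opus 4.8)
The plan is to bound the norm generator by generator and to exploit the closure of $\N_{X'}$ under precomposition with multiplication, \eqref{poker}, together with the operators $h'\Lk'_j$ furnished by Lemma~\ref{morot}. Fix a finite generating set $\mathcal G_1,\dots,\mathcal G_\nu$ of $\N_{X'}$ on $\U'$, so that $|p^*(H\phi)|_{X'}\sim\sum_s|\mathcal G_s(p^*(H\phi))|$; it suffices to bound each $|\mathcal G_s(p^*(H\phi))|$ by $C\,|\phi|_X\circ\pi$. Since $\mathcal G_s(p^*(H\phi))$ is holomorphic on the smooth space $Z'$ and $|\phi|_X\circ\pi$ is continuous there, it is enough to prove the bound on the dense open set $Z'\setminus V$ and then invoke continuity.

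First I would record the algebra on $X'$. Let $\M\subseteq\N_{X'}$ be the $\Ok_{Z'}$-submodule generated by the operators $h'\Lk'_j$, with $h'=\pi^*h$ and $h$ vanishing on $Z_{sing}$ (not identically on $Z$), as in Lemma~\ref{morot}. Where $h'\neq0$ the $\Lk'_j$ are holomorphic, and—since $p$ is an isomorphism of non-reduced spaces there and $\Lk_1,\dots,\Lk_m$ generate $\N_X$ on $Z_{reg}$—the $\Lk'_j$ generate $\N_{X'}$; as $h'$ is invertible on $\{h'\neq0\}$ this gives $\M=\N_{X'}$ there. Hence the coherent quotient $\N_{X'}/\M$ is supported on $Z(h')$, so by the Nullstellensatz $(h')^{r_0}$ annihilates it for some $r_0$.

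Next, using $p^*(H\phi)=H'\,p^*\phi$ with $H'=p^*H$ and \eqref{poker} on $X'$, I write $\mathcal G_s(p^*(H\phi))=(\mathcal G_s\circ M_{H'})(p^*\phi)$, where $\mathcal G_s\circ M_{H'}\in\N_{X'}$. The key step is to choose $H$ so that $\mathcal G_s\circ M_{H'}\in\M$. I would take $H=h^{\,r}$ with $r$ large: on the associated graded of $\N_{X'}$ filtered by operator order, precomposition with $M_{h'}$ is multiplication by $h'$, and $\mathrm{gr}(\N_{X'}/\M)$ is supported on $Z(h')$; an induction over the order then upgrades the symbolic annihilation to $\mathcal G_s\circ M_{(h')^{r}}\in\M$ for $r$ large. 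Writing $\mathcal G_s\circ M_{H'}=\sum_j c_{sj}\,h'\Lk'_j$ with $c_{sj}\in\Ok_{Z'}$ and applying the identity $\Lk'_j(p^*\phi)=p_0^*(\Lk_j\phi)$ of Lemma~\ref{morot}, I get on $Z'\setminus V$
\begin{equation*}
\mathcal G_s(p^*(H\phi))=\sum_j c_{sj}\,h'\,\big(\Lk_j\phi\circ\pi\big),
\end{equation*}
whence $|\mathcal G_s(p^*(H\phi))|\le\big(\sum_j|c_{sj}h'|\big)\,|\phi|_X\circ\pi\le C\,|\phi|_X\circ\pi$, with $C$ the supremum of the holomorphic functions $c_{sj}h'$ over the relatively compact $\U'$. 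Summing over $s$ and extending by continuity to $Z'$ yields \eqref{pellets4}, and $H=h^{r}\not\equiv0$ on $Z$.

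The main obstacle is precisely the middle step: precomposition with $M_{H'}$ does not commute with $\mathcal G_s$, so membership $\mathcal G_s\circ M_{H'}\in\M$ is not formal. One must control how the derivatives falling on $H'$ in the Leibniz expansion lower the order of vanishing along $Z(h')$ (and along $V$), and choose the vanishing order $r$ of $H=h^{r}$ large enough—uniformly over the finitely many generators $\mathcal G_s$ and over the finitely many submersion charts used to define $\N_X$ and $\N_{X'}$—to absorb both this loss (at most $\max_s\mathrm{ord}\,\mathcal G_s$) and the annihilator exponent $r_0$. Once this quantitative vanishing estimate is in place, everything else is soft.
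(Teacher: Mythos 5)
Your proof is correct and follows essentially the same route as the paper: the same submodule $\widehat\N_{X'}$ generated by the $h'\Lk'_j$, the same Nullstellensatz annihilation of $\N_{X'}/\widehat\N_{X'}$ by a power of $h'$, the same Leibniz bookkeeping of vanishing orders when derivatives fall on $H'=\pi^*h^r$, and the same use of the pullback identity $\Lk'_j(p^*\phi)=p_0^*(\Lk_j\phi)$ from Lemma~\ref{morot}. The only (cosmetic) difference is that you package the paper's two pointwise inequalities, $|H'|\,|\phi'|_{X'}\lesssim|\phi\circ\pi|_X$ and $|\T_\ell((H')^\nu\phi')|\lesssim|H'\T_\ell\phi'|$, into the single module-membership statement $\mathcal G_s\circ M_{H'}\in\widehat\N_{X'}$, which via \eqref{poker} is the same computation.
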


\begin{proof}
Let $\widehat\N_{X'}$ be the  $\Ok_{Z'}$-module generated by the $h'\Lk_j'$.
Then $\widehat\N_{X'} \subset \N_{X'}$  with equality outside $Z(h')$.
Therefore $\N_{X'}/\widehat\N_{X'}$ is annihilated  by
$H'=\pi^*H$ if $H$ is a high power of $h$.   That is,  if $\T$ is in $\N_{X'}$, then $ H' \T$
is in $\widehat\N_{X'}$ and thus $H'\T$ is an $\Ok_{X'}$-linear combination of the $h' \Lk_j'$.  

Fix a point $x'\in \pi^{-1}(x)\cap Z'$.
Let $\T_\ell$ be a set of generators for $\N_{X'}$ in a \nbh $\V$ of $x'$.   
For any $\phi$ we have, with $\phi'=\pi^*\phi$, and $z'\in \V$, 
\begin{multline*}
|H'(z')||\phi'(z')|_{X'}\sim \sum_\ell \big|(H'\T_\ell\phi')(z')\big|\lesssim  \sum_j  \big|(h'\Lk_j'\pi^*\phi')(z')\big|\le \\
\sum_j  \big|\pi^*(\Lk_j\phi)(z')\big|\sim |\phi (\pi(z'))|_X.
\end{multline*}
On the other hand, if $\nu$ is large enough,  
$
|\T_\ell ((H')^\nu\phi')|\lesssim |H'\T_\ell\phi' |
$
for each $\ell$ and hence
$
| (H')^\nu\phi'|_{X'}\lesssim |H'| |\phi'|_{X'}.
$
Denoting $H^\nu$ by $H$  thus \eqref{pellets4} holds for $z'\in \V$.  Since $\pi^{-1}(x)$ is compact,
\eqref{pellets4} holds for all $z'$ in an open \nbh of $\pi^{-1}(x)$. Hence the lemma follows. 
\end{proof}

Assume that $\phi_j$ is a sequence as in Theorem~\ref{thmB} and let $\phi'_j=p^*\phi_j$.
It follows from Lemma~\ref{pellets3}, and Theorem~\ref{thmB} in  case that $Z$ is smooth, 
see, Section~\ref{proofB}, that
there is a holomorphic function $\xi'$ on $X'\cap\U'$ such that $H'\phi'_j\to \xi'$ 
uniformly in the $|\cdot|_{X'}$-norm.  Notice that $\xi'/H'$ is meromorphic on $X'\cap\U'$.

\begin{lma}\label{potatis}
With the notation above,
$
\phi'_j R'\to (\xi' /H') R'
$
in $\U'$.
\end{lma}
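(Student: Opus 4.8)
The plan is to carry the whole question down to the smooth base $Z'$, where the mechanism of the smooth case of Theorem~\ref{thmB} (Lemma~\ref{partout}) is available, and to treat the exceptional locus $V=p_0^{-1}Z_{sing}$, which has positive codimension in $Z'$, by means of the SEP. Throughout I use the factorization $R'=\alpha'\mu'$ from Lemma~\ref{godis}, with $\alpha'$ almost semi-meromorphic and $\mu'$ a Coleff--Herrera current on $Z'$ with poles on $V$. Since $\phi'_j$ is holomorphic on $X'$ and $\xi'/H'$ is meromorphic on $X'$, both $\phi'_j R'$ and $(\xi'/H')R'$ are honest pseudomeromorphic currents supported on $Z'$ and, as holomorphic (resp.\ meromorphic) multiples of $R'$, both enjoy the SEP with respect to $Z'$; cf.\ Section~\ref{pm}.

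I would first record convergence off $V$. Over $X\setminus Z_{sing}$ the space $Z$ is smooth and $p$ is a biholomorphism of non-reduced spaces, so by the smooth case of Theorem~\ref{thmB} the Cauchy sequence $\phi_j$ converges, with all derivatives of its local representatives, uniformly on compacts there; pulling back by $p$ gives that $\phi'_j$ converges uniformly with all derivatives on compacts of $X'\setminus V$, necessarily to $\xi'/H'$. As $R'$ is a fixed current, this yields $\phi'_j R'\to(\xi'/H')R'$ on $X'\setminus V$.

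Next I would prove the multiplied statement $(H'\phi'_j)R'\to\xi'R'$ globally on $X'$. Here $H'\phi'_j=p^*(H\phi_j)$ and $\xi'$ are holomorphic on $X'$ and $H'\phi'_j\to\xi'$ in the $|\cdot|_{X'}$-norm, uniformly on compacts of $Z'$. Fix coordinates $(z',w')$ and a multiindex $M'$ with $\langle(w')^{M'+\1}\rangle\subset\J'$; the coefficients of the expansion of $(H'\phi'_j)\mu'$ in the elementary currents $dz'\w\dbar\frac{dw'}{(w')^{\delta+\1}}$, $\delta\le M'$, are $\Lk'_{\mu',\delta}(H'\phi'_j)$, where $\Lk'_{\mu',\delta}\colon\eta'\mapsto\pi_*((w')^{\delta}\eta'\mu')$ lies in $\N_{X'}$ off $V$. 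Choosing a holomorphic $h'$ cutting out $V$ to high order, $h'^{k}\Lk'_{\mu',\delta}\in\N_{X'}$ by Lemma~\ref{pellets2}, so $h'^{k}\Lk'_{\mu',\delta}(H'\phi'_j)\to h'^{k}\Lk'_{\mu',\delta}\xi'$ uniformly on all of $Z'$; writing each coefficient as $h'^{-k}$ times this uniformly convergent holomorphic family and absorbing the fixed factor $h'^{-k}$ into the elementary currents, I obtain $(H'\phi'_j)\mu'\to\xi'\mu'$. Since wedging with the almost semi-meromorphic $\alpha'$ acts coefficientwise on these fixed currents and commutes with the regularizations by \eqref{commute2}, it follows that $(H'\phi'_j)R'=\alpha'(H'\phi'_j)\mu'\to\alpha'\xi'\mu'=\xi'R'$ on all of $X'$.

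The hard part is the final division by $H'$. Since $H'$ is holomorphic, $(H'\phi'_j)R'=H'(\phi'_j R')$ and $\xi'R'=H'\,(\xi'/H')R'$ by \eqref{commute}, so the previous step reads $H'T_j\to0$ for $T_j:=\phi'_j R'-(\xi'/H')R'$, while the second paragraph gives $T_j\to0$ on $X'\setminus V$; moreover each $T_j$ has the SEP with respect to $Z'$ and hence does not charge the positive-codimension set $V$. Weak convergence is destroyed by multiplication with the meromorphic $1/H'$, so one cannot simply cancel $H'$; instead I would use that, over a fixed patch, every $\phi'_j\mu'$ is a combination of the finitely many fixed elementary currents $dz'\w\dbar\frac{dw'}{(w')^{\delta+\1}}$, $\delta\le M'$, which gives a uniform bound on the order of the $T_j$. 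Combining this uniform order bound with the SEP and the convergence $T_j\to0$ off $V$, the mass of $T_j$ on a shrinking neighbourhood of $V$ should tend to $0$ uniformly in $j$, which lets me interchange the limit in $j$ with the SEP-regularization and conclude $T_j\to0$ on all of $X'$, that is, $\phi'_j R'\to(\xi'/H')R'$. This passage across $V$ is the step I expect to be the main obstacle, and it is exactly where the SEP of $R'$ is indispensable.
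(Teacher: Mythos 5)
Your first three paragraphs are sound and run close to the paper's own mechanism: the convergence off $V$, and the global convergence of the \emph{multiplied} sequence $(H'\phi'_j)R'\to\xi'R'$ obtained by clearing the poles of $\mu'$ on $V$ with a power of $h'$ so as to land in $\N_{X'}$ and then exploiting uniform convergence of holomorphic data, are both correct and correspond to the paper's step of replacing $\mu'$ by the genuine Coleff--Herrera tuple $g'\mu'=a\,dz\w\hat\mu$. The problem is your fourth paragraph. Setting $T_j=\phi'_jR'-(\xi'/H')R'$, the facts that $H'T_j\to 0$, that $T_j\to 0$ off $V$, and that each individual $T_j$ has the SEP do \emph{not} yield $T_j\to 0$: the SEP is a statement about each fixed current ($\lim_{\epsilon}\chi(|h|/\epsilon)T_j=T_j$ for each $j$) and gives no uniformity in $j$, and a uniform bound on the order of the $T_j$ does not control their mass near $V$. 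Your own phrasing ("should tend to $0$ uniformly in $j$") marks exactly the unproved claim; as written this step is a gap, not a proof.

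The paper never performs this division at the level of limits. Following the proof of Lemma~\ref{partout}, it writes $\phi'_jR'=\alpha'\phi'_j\mu'$ and, after multiplying $\mu'$ by a suitable holomorphic $g'$ (which one may take divisible by $H'$) so that $g'\mu'=a\,dz\w\hat\mu$ with $\hat\mu$ as in \eqref{grobian}, obtains $\phi'_jR'=(\alpha'/g')\,\hat\psi_j\,\hat\mu$, where $\hat\psi_j$ are representatives of $a H'\phi'_j/(H')\cdots$ --- more precisely, of the classes $aH'\phi'_j$ in the Cohen--Macaulay complete intersection $\Ok_\U/\langle w^{M+\1}\rangle$, cf.\ \eqref{mor} and Lemma~\ref{chcase}, which converge in $\E(\U')$ because $H'\phi'_j$ is Cauchy in the $|\cdot|_{X'}$-norm. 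All the singular factors ($\alpha'$, $1/g'$, $1/H'$, $\hat\mu$) are thereby packaged into one \emph{fixed} pseudomeromorphic current, and the only $j$-dependence sits in smooth functions converging in the $C^\infty$ topology, against which multiplication is continuous; the limit is then identified with $(\xi'/H')R'$ by the SEP exactly as in \eqref{prutt3}. Note that this repair is available inside your own scheme: in your third paragraph you already wrote the coefficients of $(H'\phi'_j)\mu'$ as $(h')^{-k}$ times uniformly convergent holomorphic functions; doing the same directly for $\phi'_j\mu'$, i.e.\ writing its coefficients as the fixed almost semi-meromorphic factor $1/((h')^kH')$ times the uniformly convergent holomorphic functions $(h')^kH'\Lk'_{\mu',\delta}\phi'_j$ and invoking \eqref{commute} and \eqref{commute2}, gives the convergence across $V$ at once and makes your fourth paragraph unnecessary.
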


\begin{proof}
Since  $\dbar\mu'$ has support on $V$, 
$\dbar (g'\mu')=0$ for a suitable $g'=\pi^* g$ not vanishing identically on $Z'$.  From Lemma~\ref{sork}
we conclude that $g'\mu'$ is a tuple in $\Homs(\Ok_{X'}, \CH_{\U'}^{Z'})$. 
If $(z,w)$ are coordinates in $\V\subset\U'$ such that $Z'\cap\V=\{w=0\}$, then  
$g'\mu'=a dz\w \hat\mu$, cf.~\eqref{grobian}, 
for a suitable holomorphic tuple $a$ in $\V$.  Using \eqref{commute} and Lemma~\ref{godis} 
we can now prove Lemma~\ref{potatis} in $\V$
in the same way as Lemma~\ref{partout}. By compactness Lemma~\ref{potatis} holds in $\U'$.
\end{proof}

By Lemma~\ref{gamas2} there is a meromorphic  $\xi$ on $X\cap \U$ such that $\xi'=p^*\xi$.
Define the meromorphic function $\phi=\xi/H$ on $X\cap\U$.  Clearly $p^*\phi=\xi'/H'$ so that
\begin{equation}\label{slut}
\pi_*((\xi'/H') R')=\phi R
\end{equation}
outside $Z(H)\cap Z$.  However, both sides of \eqref{slut} has the SEP with respect to $Z$
so the equality holds in $\U$.  Since $\pi_*(\phi'_j R')=\pi_*(p^*\phi_j R')=\phi_j R$ we 
conclude from Lemma~\ref{potatis} that $\phi_j R\to \phi R$. 
In view of Theorem~\ref{strong}  now Theorem~\ref{thmB} follows as  in the smooth case
in Section~\ref{proofB}.

\end{document}